\newlist{enumD}{enumerate}{1}
\setlist[enumD]{label=(D\arabic*)}
\numberwithin{equation}{section}
\numberwithin{figure}{section}
\theoremstyle{plain}
\newtheorem{thm}{\protect\theoremname}
  \theoremstyle{definition}
  \newtheorem{defn}[thm]{\protect\definitionname}
  \theoremstyle{plain}
  \newtheorem{prop}[thm]{\protect\propositionname}
  \theoremstyle{remark}
  \newtheorem{rem}[thm]{\protect\remarkname}
  \theoremstyle{definition}
  \newtheorem{example}[thm]{\protect\examplename}
  \theoremstyle{plain}
  \newtheorem{lem}[thm]{\protect\lemmaname}
  \theoremstyle{plain}
  \newtheorem{cor}[thm]{\protect\corollaryname}
\def\g{{\mathfrak g}}
\newcommand{\ie}{\emph{i.e.} }
\newcommand{\ot}{\otimes}
\newcommand{\cat}[1]{\mathbf{#1}}
\newcommand{\ca}[1]{\mathcal{#1}}
\def\co{\colon\thinspace}
\newcommand{\Tp}{\mathcal{PT}_{p}}
\newcommand{\K}{\mathbb{K}}
\newcommand{\TB}{\ca{PSB}}
\newcommand{\bsq}{\begin{tikzpicture}[scale=1,baseline=-2.5pt]
	\draw [black,fill=black] (-.06,-.06) rectangle (.06,.06); 
	\end{tikzpicture}}
\newcommand{\Prim}{\operatorname{Prim}}
\tikzstyle{my circle}=[draw, fill, circle, minimum size=3pt, inner sep=0pt]	
\tikzstyle{sq}=[draw, fill, rectangle, minimum size=3pt, inner sep=0pt]		
\newcommand{\lt}{\mathcal{LT}}
\newcommand{\lw}{\mathcal{LW}}
\newcommand{\w}{\mathcal{W}}
  \providecommand{\corollaryname}{Corollary}
  \providecommand{\definitionname}{Definition}
  \providecommand{\examplename}{Example}
  \providecommand{\lemmaname}{Lemma}
  \providecommand{\propositionname}{Proposition}
  \providecommand{\remarkname}{Remark}
\providecommand{\theoremname}{Theorem}
\begin{document}
	
	\title{Post-symmetric braces and integration of post-Lie algebras}
		\author{Igor Mencattini}
	\address{Universidade de S\~ao Paulo,
		Instituto de Ci\^encias Matem\'aticas e de Computa\c c\~ao,
		Avenida Trabalhador S\~ao-carlense, 400 -
		CEP: 13566-590 - S\~ao Carlos, SP -
		Brazil }
	\email{igorre@icmc.usp.br}
	\author{Alexandre Quesney}
	\address{Universidade de S\~ao Paulo,
		Instituto de Ci\^encias Matem\'aticas e de Computa\c c\~ao,
		Avenida Trabalhador S\~ao-carlense, 400 -
		CEP: 13566-590 - S\~ao Carlos, SP -
		Brazil }
	\email{math@quesney.org} 
	\author{Pryscilla Silva}
	\address{Universidade Estadual de Santa Cruz, Departamento de Ci\^encias Exatas, Campus Soane Nazar\'e de Andrade, Rodovia Jorge Amado, km 16, Bairro Salobrinho
		CEP 45662-900- Ilh\'eus,BA-Brasil}
	\email{psfsilva@uesc.br}

\keywords{post-symmetric brace algebra, post-Lie algebra, post-Lie Magnus expansion.}

\subjclass[2000]{16T05,16T10,16T30,17A30,17A50,17B35,17D99.}

\date{\today}

	\begin{abstract}	
	The aim of this letter is twofold. Firstly, we introduce the post-Lie analogue of the notion of a symmetric brace algebra, termed in the sequel \emph{post-symmetric brace} algebra. These brace algebras are defined using a suitable algebraic operad, which turns out to be isomorphic to the  operad of the post-Lie algebras.  
	Secondly, using these new brace algebras, together with the so called \emph{post-Lie Magnus expansion}, we aim both to analyze the enveloping algebra of the corresponding post-Lie algebra and to compare the two Baker-Campbell-Hausdorff series there naturally defined. 
	\end{abstract}
	\maketitle

 \tableofcontents

\section{Introduction}

	Post-Lie algebras are non-associative algebras which later on appeared to play an important role in different areas of pure and applied mathematics. In this paper we are mainly concerned with some of their properties which are more easily detected through the operadic approach. To make the present work as self-contained as possible we summarize here below the necessary background, stressing differences and similarities with the better known class of the pre-Lie algebras.
	
	\emph{Pre}-Lie algebras have been introduced by Vinberg in \cite{Vinberg} in his studies about convex cones and, almost at the same time, they appeared in Gerstenhaber's foundational work \cite{Gersten} about the deformation theory of associative algebras. 
	Since then, pre-Lie algebras have been at the center of extensive investigations, especially  because of their importance in  combinatorics, mathematical physics, differential geometry, Lie theory and numerical analysis; see \cite{Manchon} and \cite{burde} for comprehensive reviews. %
	A pre-Lie algebra is a vector space $V$ endowed with a bilinear product $\triangleright:V\otimes V\rightarrow V$ whose associator 
	\begin{equation*}
	{\mathrm a}_{\triangleright}(x,y,z):=x\triangleright (y\triangleright z)-(x\triangleright y)\triangleright z,\,\forall x,y,z\in V,\label{eq:ass}
	\end{equation*}	
	satisfies 
		\begin{equation}
	{\mathrm a}_{\triangleright}(x,y,z)={\mathrm a}_{\triangleright}(y,x,z),\,\forall x,y,z\in V.\label{eq:preLie}
	\end{equation}
	The condition \eqref{eq:preLie} guaranties that $[-,-]:V\otimes V\rightarrow V$, defined by $[x,y]:=x\triangleright y-y\triangleright x$ for all $x,y\in V$, satisfies the Jacobi identity so that $V_{\text{Lie}}=(V,[-,-])$ is a Lie algebra.

	The universal enveloping algebra of a Lie algebra coming from a pre-Lie algebra $(V,\triangleright)$ was analyzed in depth in the papers \cite{GO1,GO2}, where it was shown that the existence of the pre-Lie product on $V$ provides the cofree symmetric coalgebra $S(V)$ with an associative but not commutative product
	$\ast:S(V)\otimes S(V)\rightarrow S(V)$, defined by
	\begin{equation}\label{eq: ast def}
		A\ast B = A_{(1)}(A_{(2)}\triangleright B),\,\forall A,B\in S(V).
	\end{equation}
	
	Endowed with such a product and its usual coalgebra structure, $S(V)$ becomes a bialgebra isomorphic to the universal enveloping algebra of $V_{\text{Lie}}$. 
	In Formula \eqref{eq: ast def} enters both the shuffle coproduct of $S(V)$ and a non-associative product $\triangleright:S(V)\otimes S(V)\rightarrow S(V)$ obtained extending to $S(V)$ the original pre-Lie product defined on $V$. Furthermore, the restriction $\triangleright: S(V)\otimes V\rightarrow V$ of the latter non-associative product defines a structure of \emph{symmetric brace} algebra on the vector space $V$, that is a family of operations   $B_{n}\in\operatorname{End}_{\mathbb K}(V^{\otimes n+1},V)$, $n\geq 1$, symmetric in the first $n$ variables and such that 
	\[
	B_n(v_1,...,v_n;u)=B_1(v_1;B_{n-1}(v_2,...,v_n;u))-\sum_{k=2}^n B_{n-1}(v_2,...,B_1(v_1;v_k),...,v_n;u)
	\]
	for all $v_i$'s and $u$ in $V$.  
	In \cite{GO1,LM} and \cite{CL} it was proved that the symmetric brace algebras form a category isomorphic to the category of the pre-Lie algebras.
	More precisely, in \cite{CL} was proposed a presentation, in terms of the set of {\em non-planar} rooted trees, of the operad  $\ca{P}re\ca{L}ie$ controlling the pre-Lie algebras. As it turned out that the algebras over this operad are the symmetric brace algebras, this operadic approach provided a convenient way to prove the above mentioned isomorphism between the category of symmetric brace algebras and the category of pre-Lie algebras.  

\emph{Post}-Lie algebras were introduced by Vallette in \cite{Val} as being the algebras over the operad $\ca{P}ost\ca{L}ie$, which is the Koszul dual of the operad controlling the commutative trialgebras. 
   They were further analyzed a few years later by Munthe-Kaas and Ludervold in \cite{MKL}, in their study of the {\em order conditions} for the Lie group integrators. 
   Formally speaking a post-Lie algebra is a Lie algebra $(\mathfrak g, [-,-])$ endowed with non- associative product $\triangleright:\mathfrak g\otimes\mathfrak g\rightarrow\mathfrak g$ that satisfies	
	\begin{align}
	z\triangleright [x,y]&=[z\triangleright x,y]+[x,z\triangleright y]\label{eq:pl1} &\text{ and } \\
	[x,y]\triangleright z&={\mathrm a}_{\triangleright}(x,y,z)-{\mathrm a}_{\triangleright}(y,x,z) &\text{ for all } x,y,z\in V. \label{eq:pl2}
	\end{align}
	It is worth to note that, in spite the post-Lie product does not yield a Lie bracket by antisymmetrization, the bilinear product $[[-,-]]:\g\otimes\g\rightarrow\g$, defined by
	\begin{equation}
	[[x,y]]=x\triangleright y-y\triangleright x+[x,y],~\forall x,y\in\g,\label{eq:newLie}
	\end{equation}
	defines on $\g$ another Lie algebra structure, which, from now on, will be denoted by $\overline\g=(\g,[[-,-]])$. 
	Note also that \eqref{eq:pl2} implies that a post-Lie algebra with a trivial Lie bracket is pre-Lie algebra. 
	
	After their introduction post-Lie algebras have been extensively studied, since they appeared in several different areas of mathematics like Lie theory (see \cite{Burde1, Burde2, Burde3, KLM}),  mathematical physics (see \cite{BGN, EFLMMK, KI}) and numerical analysis (see \cite{MKL, CEFMK, CEFO, MuntheKaas}). 
	Although the post-Lie algebras were introduced to the realm of numerical analysis in \cite{MKL}, their relevance for the theory of numerical integration could be  traced back to the fundamental work \cite{MW}, where the authors performed a deep investigation of the Hopf algebra relevant for the algebraic description of the Lie group integrators.
	More precisely, in \cite{MW} was introduced a cocommutative but non commutative Hopf algebra whose underlying linear space is generated by the forests  of {\em planar} rooted trees (there called {\em ordered} rooted trees). This algebraic structure was then enhanced with an operation of {\em left-grafting} and was termed {\em $D$--algebra}. 
	With this concept at hand, the authors of \cite{MW} could inscribe the theory of Lie group integrators in the algebraic framework of the theory of Hopf algebras. 
	Soon after it was understood that $D$--algebras played the role of universal enveloping algebras for post-Lie algebras. More precisely, in \cite{MKL} it was shown that the universal enveloping algebra functor 
	\begin{equation}\label{eq: functor env. postLie1}
	\mathcal{U} \co \cat{PostLie}  \to \cat{D-algebra}  
	\end{equation}  
	provides an adjunction  between the category of post-Lie algebras and the category of $D$--algebras, whose adjoint is the \emph{derivation} functor (see \cite[Definition 3.5]{MKL}).\\  
	The properties of \eqref{eq: functor env. postLie1} were further investigated in \cite{KLM}, where it was shown that for any post-Lie algebra $(\g,\triangleright)$ there exists a \emph{unique} extension $\triangleright:\mathcal U(\g)\otimes\mathcal U(\g)\rightarrow\mathcal U(\g)$ of the post-Lie product which satisfies suitable compatibility conditions with its canonical coalgebra structure, see \cite[Proposition 3.1]{KLM}. Furthermore, it was proven that the product $\ast:\mathcal U(\g)\otimes\mathcal U(\g)\rightarrow\mathcal U(\g)$, defined, in analogy to \eqref{eq: ast def}, by 
	\begin{equation}\label{eq: ast def2}
	A\ast B=A_{(1)}(A_{(2)}\triangleright B), ~\text{ for all $A$ and $B$ in }\mathcal U(\g), 
	\end{equation} 
	makes $\mathcal U(\g)$ into a bialgebra isomorphic to $\mathcal U(\overline\g)$. It is worth to mention that the product $\ast$ corresponds to the \emph{Grossman-Larson} product on the free post-Lie algebra of planar rooted trees; see \cite{MW} and also \cite{MKL,KLM}. 
\\
	
\emph{Main results of the present work. } 
The main aim of the present paper is to introduce the post-Lie analogue of the symmetric brace algebras and to study some of its properties, emphasizing its relations with other mathematical structures naturally associated to every post-Lie algebra.  

First recall that a relevant point in the construction of the operad $\ca{RT}$ in \cite{CL} is  that the symmetric braces (which are nothing but iterations of the pre-Lie product) are naturally encoded in the combinatorics of \emph{non-planar} rooted trees, which therefore  provide a very convenient presentation of the free pre-Lie algebra. 
In analogy with this case, in Section \ref{sec: (sub) PL via PRT } it is shown that the presentation of the free post-Lie algebra via \emph{planar} rooted trees, see \cite{MW,MKL,KLM}, also comes from an operadic structure. The operad in question, denoted by $\TB$, is shown to be isomorphic to $\ca{P}ost\ca{L}ie$ in Theorem \ref{thm: isom}.  
 
 The $\TB$--algebras are characterized in Section \ref{sec:tbalgebra}, where it is shown that they are post-Lie algebras endowed with brace type operations, see Proposition \ref{prop:tb}. 
 We term them \emph{post-symmetric brace algebras} and we propose them as our candidate to be the desired post-Lie analogue of the symmetric brace algebras.  
The post-symmetric brace algebras turn out to be a convenient tool to further investigate the properties of the product defined in \eqref{eq: ast def2}. 
More in details, in Section \ref{ssec:tbvsda}, we introduce the notion of $D$--\emph{bi}algebras and we show that \eqref{eq: functor env. postLie1}, seen as a functor 
\begin{equation}\label{eq: functor env. postLie}
\mathcal{U} \co \cat{PostLie}  \to \cat{D-bialgebra},  
\end{equation}
is full and faithful (see Theorem \ref{thm:tbrD}), and provides an adjunction of categories (see Proposition \ref{prop: adjoint})  whose adjoint is the \emph{primitive elements} functor,
\begin{equation}\label{eq: functor primitive}
\operatorname{Prim}\co \cat{D-bialgebra}\to\cat{PostLie},
\end{equation}  
which associates to every $D$--bialgebra its primitive elements, see Definition \ref{def:dbialgebra}.
 The faithfulness of the functor $\mathcal{U}$ follows, essentially, from the observation that the post-symmetric braces on a Lie algebra $\g$ encode the left part of the $D$--product on $\mathcal{U}(\g)$.

In Section \ref{sec:magnus}, we analyze the so called \emph{post-Lie Magnus expansion}, introduced in \cite{EFLMMK}, see also \cite{KLM,KI}, from the view-point of the $\TB$--algebras. In more details, in Proposition \ref{prop:bchprop} we relate the Baker-Campbell-Hausdorff formulas of the Lie algebras $(\g,[-,-])$ and $\overline{\g}$ to the post-Lie Magnus expansion. 
This generalizes a result  proven for the first time in \cite{AG} about the integration of a pre-Lie algebra in terms of the group of the so called \emph{formal flows}. We recover such a result in Corollary \ref{cor:ag}, as a consequence of our Proposition \ref{prop:bchprop}.

\subsection{Conventions}\label{sec: convention operads} 

Throughout the paper $\mathbb K$ will denote a field of characteristic zero.  
Tensor product will be taken over $\mathbb K$, and the tensor product of two $\mathbb K$--vector spaces $V$ and $W$ will be denoted by $V\ot W$.  

For a $\mathbb K$--vector space $V$, we let $T(V)=\oplus_{n\geq 0}V^{\otimes n}$, where $V^{\otimes 0}=\mathbb K$, be its free tensor algebra.  
The map $\Delta_{sh}\co V\to T(V)\ot T(V)$ given by $\Delta_{sh}(x)=x\ot 1 + 1\ot x$ for all $x\in V$ extends uniquely to $T(V)$  as a morphism of algebras. The resulting map $\Delta_{sh}\co T(V)\rightarrow T(V)\otimes T(V)$ is called \emph{shuffle} coproduct and  makes the free tensor algebra into a cocommutative bialgebra.  
It is described as follows.  
Let $X=x_1\ot \cdots \ot x_n$ be in $V^{\ot n}$. For a sub order set $I=\{i_{1}<\cdots <i_{k}\}$ of $\{1<...<n\}$, we let $X_I$ denote the element $x_{i_1}\ot \cdots \ot x_{i_k}$ in $V^{\ot |I|}$. One has  
\[
\Delta_{sh} (X)=\sum_{I\coprod J}X_{I}\otimes X_{J},
\] 
where the sum runs over the ordered partitions $I\coprod J$ of $\{1<...<n\}$. 

In general, for a coproduct $\Delta\co W\to W\ot W$,  we will use the \emph{Sweedler} notation in its compact form: 
\[
\Delta (X)=X_{(1)}\otimes X_{(2)} ~~\text{ for all } X\in W. 
\] 

To save notation, for a linear map $f\co V^{\ot k} \to W$, we will  write $f(x_1,...,x_k)$ for $f(x_1\ot ...\ot x_k)$.

We will consider operads in the category of $\mathbb K$--vector spaces. Our convention follows \cite{LV} to which we refer for more details. 
In brief, an operad  $\ca{O}$ is an $\mathbb{S}$--module in the category of the $\mathbb K$--vector spaces, together with partial compositions $\circ_i\co \ca{O}(m) \ot \ca{O}(n) \to \ca{O}(m+n-1)$ for $1\leq i \leq m$ that satisfy associativity, equivariance and unit axioms. 
In particular, this means that for each $n\geq1$, the vector space $\ca{O}(n)$ is acted on by the symmetric group $\mathbb{S}_n$, and that the maps $\circ_i$ are equivariant for this action.  
For instance, for a vector space $V$, the $\mathbb{S}$--module $\operatorname{End}_V$ given by $\operatorname{End}_V(n)=\operatorname{Hom}_{\mathbb K}(V^{\ot n},V)$ for $n\geq1$, is an operad for the partial composition of linear maps: 
\begin{equation*}
f\circ_i g (x_1, ... , x_{n+m-1}) := f(x_1, ..., x_{i-1}, g(x_i, ..., x_{i+n-1}), x_{i+n},..., x_{n+m-1}),
\end{equation*} 
for every two maps $f\in \operatorname{End}_V(m)$  and $g\in \operatorname{End}_V(n)$, and  all $x_j\in V$. The action of the symmetric group is given by permutation of the variables. 
A vector space $V$ is called an  $\ca{O}$--\emph{algebra} if there is a morphism of operads $\ca{O}\to \operatorname{End}_V$. 

An ideal $\ca{I}$ of an operad $\ca{O}$ is a sub $\mathbb{S}$--module of $\ca{O}$ such that the maps $\circ_i$ co-restrict to $\ca{I}$ whenever they are restricted to $\ca{I}$ in any of its two components \ie they induce maps $\circ_i\co \ca{I}(m) \ot \ca{O}(n) \to \ca{I}(m+n-1)$ and $\circ_i\co \ca{O}(m) \ot \ca{I}(n) \to \ca{I}(m+n-1)$. 
In particular, the $\mathbb{S}$--module quotient $(\ca{O}/\ca{I})(n):=\ca{O}(n)/\ca{I}(n)$, for $n\geq 1$, has a structure of operad.

To end this preamble, let us make a comment on decompositions. 
Let  $\ca{O}$ be an operad and $T$ an element of $\ca{O}(n)$. 
Consider a decomposition of $T$, say 
$T=(\cdots(\cdots ((S_1 \circ_{i_1} S_2 ) \circ_{i_2} S_3) \cdots S_{k-1} )\circ_{i_{k-1}} S_k)^{\sigma}$. 
Suppose that for each $n$, the action of $\mathbb{S}_n$ on $\ca{O}(n)$ is free. 
Because of the $\mathbb{S}$--equivariance of the maps $\circ_i$, by applying equivariant actions of $\mathbb{S}$ one gets other decompositions of $T$. If there is no additional decomposition of $T$, anyone of the above decompositions is called  \emph{unique $\mathbb{S}$--equivariant}.

\section{An operad of planar trees}\label{sec: Operad of PRT}	

In this preliminary section, we set up notation and introduce the relevant combinatorial objects for our needs. 
We also define an operad of trees, which will be the starting point for defining other operads important for the present work.

\begin{defn} 
	A \emph{planar rooted tree} is an isomorphism class of contractible graphs, embedded in the plane, and endowed with a distinguished vertex, called the \emph{root}, to which is attached an adjacent half-edge, called the \emph{root-edge} of the planar tree. 
\end{defn}

For a planar rooted tree $T$, we let $V(T)$ be the set of all its vertices. On it, we consider two orders: 
\begin{itemize}
	\item  The \emph{level partial} order $\prec$ defined by orienting the edges of $T$ towards the root, except the root-edge. 
	For two  vertices $u$ and $v$ of $ V(T)$, we write $v \prec u$ if there is a string of oriented edges from $v$ to $u$. In particular, the root is  maximal for this partial order. 
	\item The \emph{canonical linear} order $<$: starting from the root-edge of $T$, we run along $T$ in the clockwise direction, passing trough each edge once per direction. The order we meet the vertices for the first time gives the order $<$; see Example \ref{ex: angles}. In particular the root is the minimal element for $<$. 
\end{itemize}

Pictorially, our trees are drawn with the root at the bottom, and the order on the set of the incoming edges of a vertex is given by the clockwise direction, i.e. from the left to the right.

For each vertex $v\in V(T)$, consider  a small disc centered at $v$. The \emph{angles} of $v$ are the connected components of the disc, when removed its intersection with the tree. The angles are ordered in the clockwise direction, starting from the component in between the outgoing edge of $v$ and its left most incoming edge (if it exists). 
We let $Ang^{min}(T)$ be the set of the \emph{minimal} angles of all the vertices of $T$. It is in canonical bijection with the set $V(T)$, which endows it with an order; see Example \ref{ex: angles}.

\begin{example}\label{ex: angles} Let $T$ be the following planar rooted tree, together with its angles:  
	\begin{equation*} 
	\pgfdeclarelayer{background}
	\pgfsetlayers{background,main} 
	\begin{tikzpicture}
	[  
	level distance=0.7cm, 
	level 2/.style={sibling distance=0.6cm}, sibling distance=0.7cm,baseline=1.5ex,scale=1.3]
	\node (1) [my circle,label=right:\tiny{$a$}] {} [grow=up]
	{
		child {node (2) [my circle,label=right:\tiny{$b$}]  {}} 
		child {node (4) [my circle,label=right:\tiny{$d$}] {}} 
		child {node (3) [my circle,label=left:\tiny{$c$}] {}
			child {node (5) [my circle,label=right:\tiny{$e$}] {}}
			child {node (6) [my circle,label=left:\tiny{$f$}] {}} }
	};
	\coordinate (Root) at (0,-.25);
	\draw [-] (0,-.35) -- (0,0) ;
	\begin{pgfonlayer}{background}
	\tkzMarkAngle[fill=gray,size=0.26cm,opacity=.3](3,1,Root)
	\tkzLabelAngle[pos=-0.45](3,1,Root){\textcolor{gray}{\tiny{$a_1$}}}
	\tkzMarkAngle[fill=gray,size=0.26cm,opacity=.3,draw opacity=.3](4,1,3)
	\tkzLabelAngle[pos=0.35](4,1,3){\textcolor{gray}{\tiny{$a_2$}}}
	\tkzMarkAngle[fill=gray,size=0.26cm,opacity=.3,draw opacity=.3](2,1,4)
	\tkzLabelAngle[pos=0.35](2,1,4){\textcolor{gray}{\tiny{$a_3$}}}
	\tkzMarkAngle[fill=gray,size=0.26cm,opacity=.3,draw opacity=.3](Root,1,2)
	\tkzLabelAngle[pos=0.35](Root,1,2){\textcolor{gray}{\tiny{$a_4$}}}
	\tkzMarkAngle[fill=gray,size=0.27cm,opacity=.3](6,3,1)
	\tkzLabelAngle[pos=-0.45](6,3,1){\textcolor{gray}{\tiny{$c_1$}}}
	\tkzMarkAngle[fill=gray,size=0.27cm,opacity=.3,draw opacity=.3](5,3,6)
	\tkzLabelAngle[pos=0.35](5,3,6){\textcolor{gray}{\tiny{$c_2$}}}
	\tkzMarkAngle[fill=gray,size=0.27cm,opacity=.3,draw opacity=.3](1,3,5)
	\tkzLabelAngle[pos=0.35](1,3,5){\textcolor{gray}{\tiny{$c_3$}}}
	\draw[fill=gray,opacity=.3] (6)  circle (.26)   ;
	\draw (6) node [above,yshift=.2cm,xshift=.2cm] {\textcolor{gray}{\tiny{$f_1$}}}; 
	\draw[fill=gray,opacity=.3] (5)  circle (.26)   ;
	\draw (5) node [above,yshift=.2cm,xshift=.2cm] {\textcolor{gray}{\tiny{$e_1$}}}; 
	\draw[fill=gray,opacity=.3] (4)  circle (.26)   ;
	\draw (4) node [above,yshift=.2cm,xshift=.2cm] {\textcolor{gray}{\tiny{$d_1$}}}; 
	\draw[fill=gray,opacity=.3] (2)  circle (.26)   ;
	\draw (2) node [above,yshift=.2cm,xshift=.2cm] {\textcolor{gray}{\tiny{$b_1$}}}; 
	\end{pgfonlayer}
	\end{tikzpicture}
	\end{equation*}
	The ordered set of the vertices of $T$ is $V(T)=\{a<c<f<e<d<b\}$. 
	The ordered set of angles of the vertex $c$ is $\{c_1<c_2<c_3\}$, the ordered set of the minimal angles of $T$ is  $Ang^{min}(T)= \{a_1<c_1<f_1<e_1<d_1<b_1\}$. 
\end{example}

From now on, when there is no ambiguity, planar rooted trees are simply called trees.  
We will consider trees with labelings, or more in general, with partial labelings.

\begin{defn} 
	Let $T$ be a tree and let $U$ be a subset of  $V(T)$.  
	A \emph{$U$--label} of $T$ is a bijection $l\co U \rightarrow \{1,...,n\}$. 
	A tree $T$ equipped with a $U$--label is called \emph{partially labeled}; it is called \emph{fully labeled} (or simply, labeled) if $U= V(T)$. 
\end{defn}

Let us denote by  $l_{id}\co U \rightarrow \{1<...<n\}$ the unique isomorphism of ordered sets, where $U$ has the induced order from  $V(T)$.  
Since $\mathbb{S}_n$ acts freely and transitively on the set of $U$--labels by post-composition,  
one can write a $U$--label in a unique form by $l_{\sigma}\co U \rightarrow \{1,...,n\}$ for some $\sigma\in \mathbb{S}_n$ (and $l_{\sigma}:=\sigma l_{id}$).

\begin{example}
	For $(312) \in \mathbb{S}_3$ we have the following examples of partially labeled trees: \\
	\begin{equation*}
	\begin{tikzpicture}
	[baseline, my circle/.style={draw, fill, circle, minimum size=3pt, inner sep=0pt},  
	level 2/.style={sibling distance=0.6cm,level distance=0.5cm}, sibling distance=0.5cm, level 1/.style={level distance=0.4cm}]
	\node {} [grow=up]
	{child{node [my circle,label=left:\tiny{$3$}] {} child {node (A) [my circle,label=left:\tiny{$2$}]  {}
			} child {node [my circle,label=left:\tiny{$1$}] {}} }};
	\end{tikzpicture}, 
	\begin{tikzpicture}
	[baseline, my circle/.style={draw, fill, circle, minimum size=3pt, inner sep=0pt},  
	level 2/.style={sibling distance=0.6cm,level distance=0.5cm}, sibling distance=0.5cm, level 1/.style={level distance=0.4cm}]
	\node {} [grow=up]
	{child{node [my circle] {} child {node (A) [my circle,label=left:\tiny{$1$}] {} child {node [my circle,label=left:\tiny{$2$}] {}}}
			child {node  [my circle,label=left:\tiny{$3$}]  {}
	}  }};
	\end{tikzpicture}, 
	\begin{tikzpicture}
	[baseline, my circle/.style={draw, fill, circle, minimum size=3pt, inner sep=0pt},  
	level 2/.style={sibling distance=0.6cm,level distance=0.5cm}, sibling distance=0.5cm, level 1/.style={level distance=0.4cm}]
	\node {} [grow=up]
	{child{node [my circle,label=left:\tiny{$3$}] {} child {node (A) [my circle, label=left:\tiny{$1$}] {} child {node [my circle,label=left:\tiny{$2$}] {}}}
			child {node  [my circle]  {}
	}  }};
	\end{tikzpicture}.  
	\end{equation*}
\end{example}

\begin{defn}
	For a partially labeled tree $T$, let $Ang_{lab}^{min}(T)$ be the set of those minimal angles of the labeled vertices of $T$. 
\end{defn}
\begin{defn}
	Recall that our trees have oriented edges. For $1\leq i \leq n$, let $In(i)$ be the order set of \emph{incoming} edges of the vertex labeled by $i$.  The order on $In(i)$ is the one induced by the clockwise orientation, that is, from the left to the right.  
\end{defn}

Let us define an operadic structure on the partially labeled trees.  
Let $\Tp(n)$ be the $\K$--vector space generated by the set of partially labeled trees, whose $U$--labels are with $U$ of cardinality $n$. 
For $m,n\geq 1$ and  $1\leq i \leq m$, let  
\begin{equation}\label{eq: partial compo partial planar tree}
\circ_i\co \Tp(m)\ot \Tp(n) \to \Tp(m+n-1)  
\end{equation}
be as follows. 

  For each $T\in \Tp(m) $, $R\in \Tp(n)$ and for a map of sets 
 \[
 \phi\co In(i)\to Ang_{lab}^{min}(R),
 \] 
 define $T\circ_i^{\phi} R$ to be the tree obtained as follows: substitute the vertex labeled by $i$ by the tree $R$, and graft the incoming edges  of $i$ to the labeled vertices of $R$ following the map $\phi$. 
 The grafting is required to be performed in such a way that it respects the natural order of each fiber of $\phi$. More precisely, since for every $\alpha$ the fiber $\phi^{-1}(\alpha)$ is endowed with the induced order from $In(i)$ (given by $Ang^{min }(T)$), when performing the substitution, the incoming edges of $\phi^{-1}(\alpha)$ preserves this order. 
 The outgoing edge of $i$  is identified with the root-edge of $R$.  
 The tree $T\circ_i^{\phi} R$ has $m-1+n$ labeled vertices, and the partial labeling is given by classical re-indexation.  
 Explicitly, for a $U$--label $l^T$ of $T$  and a $U'$--label $l^R$ of $R$, the resulting 
	label of $T\circ_i^{\phi} R$, say $l''$, is as follows. 
	Suppose $v$ be the vertex of $T$ labeled by $i$ and consider the canonical inclusion of $U\setminus \{v\}$ into the set of the vertices of $T\circ_i^{\phi} R$, and similarly for $U'$. 
	The $U\setminus \{v\}\cup U'$--label $l''$ is defined by
 \begin{equation*}
 l''(w)= 
 \begin{cases}
 	l^T(w) & \text{ if }  w\in U\setminus \{v\} \text{ and } 1 \leq l^T(w) \leq i-1 \\
 	l^R(w)+i-1 & \text{ if } w\in U'\\ 
  	l^T(w)+n & \text{ if }  w\in U\setminus \{v\} \text{ and } l^T(w) \geq i+1.
 	 	\end{cases}
 \end{equation*} 
 For two trees $T$ and  $R$ as above, their partial composition is given  by \begin{equation}\label{eq: explicit partial compo partial planar tree}
 T\circ_i R = \sum_{\phi\co In(i) \to Ang_{lab}^{min}(R)} T\circ_i^{\phi} R.
 \end{equation} 
 We extend this by linearity to get \eqref{eq: partial compo partial planar tree}. For instance, one has 
 \begin{equation*}
 \begin{tikzpicture}
 [baseline, my circle/.style={draw, fill, circle, minimum size=3pt, inner sep=0pt}, level distance=0.5cm, 
 level 2/.style={sibling distance=0.6cm}, sibling distance=0.6cm,baseline=1.5ex]
 \node [my circle,label=left:\tiny{$1$}] {} [grow=up]
 {
 	child {node [my circle,label=left:\tiny{$3$}]  {}} 
 	child {node [my circle,label=left:\tiny{$2$}] {}} 
 };
 \draw [-] (0,-.25) -- (0,0) ;
 \end{tikzpicture}
 \circ_1 
 \begin{tikzpicture}
 [baseline, my circle/.style={draw, fill, circle, minimum size=3pt, inner sep=0pt}, level distance=0.5cm, 
 level 2/.style={sibling distance=0.6cm}, sibling distance=0.6cm,baseline=1.5ex]
 \node [my circle,label=left:\tiny{$1$}] {} [grow=up]
 {
 	child {node [my circle,label=left:\tiny{}]  {}} 
 };
 \draw [-] (0,-.25) -- (0,0) ;
 \end{tikzpicture}
 =
 \begin{tikzpicture}
 [baseline, my circle/.style={draw, fill, circle, minimum size=3pt, inner sep=0pt}, level distance=0.5cm, 
 level 2/.style={sibling distance=0.6cm}, sibling distance=0.6cm,baseline=1.5ex]
 \node [my circle,label=left:\tiny{$1$}] {} [grow=up]
 {
 	child {node [my circle,label=left:\tiny{}]  {}} 
 	child {node [my circle,label=left:\tiny{$3$}] {}} 
 	child {node [my circle,label=left:\tiny{$2$}] {}}
 };
 \draw [-] (0,-.25) -- (0,0) ;
 \end{tikzpicture}
 \end{equation*}
 and 
 \begin{equation}\label{eq: exple compo prt}
 \begin{tikzpicture}
 [baseline, my circle/.style={draw, fill, circle, minimum size=3pt, inner sep=0pt}, level distance=0.5cm, 
 level 2/.style={sibling distance=0.6cm}, sibling distance=0.6cm,baseline=1.5ex]
 \node [my circle,label=left:\tiny{$1$}] {} [grow=up]
 {
 	child {node [my circle,label=left:\tiny{$3$}]  {}} 
 	child {node [my circle,label=left:\tiny{$2$}] {}} 
 };
 \draw [-] (0,-.25) -- (0,0) ;
 \end{tikzpicture}
 \circ_1 
 \begin{tikzpicture}
 [baseline, my circle/.style={draw, fill, circle, minimum size=3pt, inner sep=0pt}, level distance=0.5cm, 
 level 2/.style={sibling distance=0.6cm}, sibling distance=0.6cm,baseline=1.5ex]
 \node [my circle,label=left:\tiny{$1$}] {} [grow=up]
 {
 	child {node [my circle,label=left:\tiny{$2$}]  {}} 
 };
 \draw [-] (0,-.25) -- (0,0) ;
 \end{tikzpicture}
 =
 \begin{tikzpicture}
 [baseline, my circle/.style={draw, fill, circle, minimum size=3pt, inner sep=0pt}, level distance=0.5cm, 
 level 2/.style={sibling distance=0.6cm}, sibling distance=0.6cm,baseline=1.5ex]
 \node [my circle,label=left:\tiny{$1$}] {} [grow=up]
 {
 	child {node [my circle,label=left:\tiny{$2$}]  {}} 
 	child {node [my circle,label=left:\tiny{$4$}] {}} 
 	child {node [my circle,label=left:\tiny{$3$}] {}}
 };
 \draw [-] (0,-.25) -- (0,0) ;
 \end{tikzpicture}
 +
 \begin{tikzpicture}
 [baseline, my circle/.style={draw, fill, circle, minimum size=3pt, inner sep=0pt}, level distance=0.5cm, 
 level 2/.style={sibling distance=0.6cm}, sibling distance=0.6cm,baseline=1.5ex]
 \node [my circle,label=left:\tiny{$1$}] {} [grow=up]
 {
 	child {node [my circle,label=left:\tiny{$2$}]  {} 
 		child {node [my circle,label=left:\tiny{$4$}] {}} 
 	}
 	child {node [my circle,label=left:\tiny{$3$}] {} }
 };
 \draw [-] (0,-.25) -- (0,0) ;
 \end{tikzpicture}
 +
 \begin{tikzpicture}
 [baseline, my circle/.style={draw, fill, circle, minimum size=3pt, inner sep=0pt}, level distance=0.5cm, 
 level 2/.style={sibling distance=0.6cm}, sibling distance=0.6cm,baseline=1.5ex]
 \node [my circle,label=left:\tiny{$1$}] {} [grow=up]
 {
 	child {node [my circle,label=left:\tiny{$2$}]  {} 
 		child {node [my circle,label=left:\tiny{$3$}] {}} 
 	}
 	child {node [my circle,label=left:\tiny{$4$}] {} }
 };
 \draw [-] (0,-.25) -- (0,0) ;
 \end{tikzpicture}
 +
 \begin{tikzpicture}
 [baseline, my circle/.style={draw, fill, circle, minimum size=3pt, inner sep=0pt}, level distance=0.5cm, 
 level 2/.style={sibling distance=0.6cm}, sibling distance=0.6cm,baseline=1.5ex]
 \node [my circle,label=left:\tiny{$1$}] {} [grow=up]
 {
 	child {node [my circle,label=left:\tiny{$2$}]  {} 
 		child {node [my circle,label=left:\tiny{$4$}] {}} 
 		child {node [my circle,label=left:\tiny{$3$}] {}}
 	}
 };
 \draw [-] (0,-.25) -- (0,0) ;
 \end{tikzpicture}
 .
 \end{equation}

\begin{prop}\label{pro:operadtp}
The $\mathbb S$-module $\Tp=\oplus_{n\geq 1}\Tp(n)$ endowed with the partial composition maps defined in \eqref{eq: partial compo partial planar tree} becomes a symmetric operad. 
\end{prop}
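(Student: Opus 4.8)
The plan is to verify, one by one, the defining axioms of a symmetric operad in the sense of \cite{LV}: that each $\circ_i$ is well defined, the unit axiom, $\mathbb S$-equivariance, and the parallel and sequential associativity relations.

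\emph{Well-definedness.} First I would check that every summand $T\circ_i^{\phi}R$ appearing in \eqref{eq: explicit partial compo partial planar tree} is again a partially labeled planar rooted tree with exactly $m+n-1$ labeled vertices. Grafting the edges of $In(i)$ at minimal angles of the labeled vertices of $R$ while respecting the induced order of each fiber $\phi^{-1}(\alpha)$ produces a well-defined clockwise cyclic order of the half-edges around every vertex, hence a genuine planar embedding; the outgoing edge of the vertex $i$ becomes the new root-edge. The partial labeling $l''$ is obtained by the evident block re-indexation, a bijection onto $\{1,\dots,m+n-1\}$. Since there are only finitely many maps $\phi\co In(i)\to Ang_{lab}^{min}(R)$, \eqref{eq: explicit partial compo partial planar tree} defines an element of $\Tp(m+n-1)$, and one extends bilinearly.

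\emph{Unit and equivariance.} For the unit I would take $\mathbf 1\in\Tp(1)$ to be the tree with a single vertex, which is then the root, labeled by $1$, together with its root-edge. Its unique vertex has no incoming edge, so $In(1)=\emptyset$, the only (empty) $\phi$ produces $\mathbf 1\circ_1 R=R$; dually, $Ang_{lab}^{min}(\mathbf 1)$ is a singleton, so the only $\phi\co In(i)\to Ang_{lab}^{min}(\mathbf 1)$ grafts all of $In(i)$, in its own order, at that single angle, merely restoring the vertex $i$, whence $T\circ_i\mathbf 1=T$. For equivariance, recall that $\mathbb S_n$ acts on $\Tp(n)$ by post-composing the labels with permutations. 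A permutation of the labels of $R$ induces the corresponding permutation of $Ang_{lab}^{min}(R)$, which only reparametrizes the family of maps $\phi$ in \eqref{eq: explicit partial compo partial planar tree}; combined with the block re-indexation $l''$ this reproduces the action of the appropriate block permutation on $T\circ_i R$. A permutation of the labels of $T$ carries the vertex labeled $i$ to the one labeled $\sigma(i)$ and $In(i)$ to $In(\sigma(i))$ as ordered sets, and $l''$ transforms accordingly; this gives equivariance in the outer slot.

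\emph{Associativity.} The parallel relation $(T\circ_i R)\circ_{k+n-1}S=(T\circ_k S)\circ_i R$ for $i<k$ (with $R\in\Tp(n)$) should be immediate: the vertices $i$ and $k$ of $T$ are distinct, the two substitutions are supported on disjoint parts of the tree, and the sums over grafting maps simply factor. The sequential relation
\[
(T\circ_i R)\circ_{i+j-1}S=T\circ_i(R\circ_j S),\qquad 1\leq j\leq n,\ R\in\Tp(n),
\]
is the heart of the matter, and the step I expect to be the main obstacle. The idea is to match the grafting data on the two sides. On the left a summand is indexed by a pair $(\phi,\psi)$ with $\phi\co In(i)\to Ang_{lab}^{min}(R)$ and $\psi$ sending the incoming edges of the vertex labeled $i+j-1$ of $T\circ_i^{\phi}R$ to $Ang_{lab}^{min}(S)$; the source of $\psi$ is naturally the ordered set $\phi^{-1}(\mu_j)\sqcup In(j)$, where $\mu_j$ is the minimal angle of the vertex $j$ of $R$ and the edges coming from $T$ appear first, because grafting at $\mu_j$ inserts them to the left of the incoming edges of $j$. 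On the right a summand is indexed by $(\psi',\Phi)$ with $\psi'\co In(j)\to Ang_{lab}^{min}(S)$ and $\Phi\co In(i)\to Ang_{lab}^{min}(R\circ_j^{\psi'}S)$, where one uses the canonical identification $Ang_{lab}^{min}(R\circ_j^{\psi'}S)=\bigl(Ang_{lab}^{min}(R)\setminus\{\mu_j\}\bigr)\sqcup Ang_{lab}^{min}(S)$. I would then show that the assignment $(\phi,\psi)\mapsto(\psi',\Phi)$, with $\psi'=\psi|_{In(j)}$ and $\Phi$ equal to $\phi$ off $\phi^{-1}(\mu_j)$ and to $\psi|_{\phi^{-1}(\mu_j)}$ on $\phi^{-1}(\mu_j)$, is a bijection under which the order-preservation conditions on the fibers, and the resulting partially labeled trees, coincide. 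The hard part is precisely this bookkeeping, and it comes down to the observation already used for well-definedness: iterated grafting at minimal angles appends new edges to the left while preserving the fiber orders, so that the two orders in which the edges of $In(i)$ and $In(j)$ end up arranged around the vertices of $S$ agree. Granting this, the two sums are equal, associativity follows, and the proof is complete.
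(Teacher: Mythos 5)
Your proposal is correct: it carries out the direct verification of well-definedness, unit, equivariance, and the parallel and sequential operad axioms, with the key bookkeeping (the identification of the incoming edges of the composite vertex as $\phi^{-1}(\mu_j)\sqcup In(j)$ and the bijection of grafting data) handled properly. This is exactly the verification the paper dismisses as a routine check, so your argument matches the paper's (omitted) proof, just with the details spelled out.
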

\begin{proof}
	This is routine check. 
\end{proof}

\section{The operad $\TB$}\label{sec: (sub) PL via PRT }

In this section we introduce the operad $\TB$ defined in third author's PhD thesis \cite{PryscillaThesis}. It can be seen as the post-Lie analogue of the operad of rooted trees defined in \cite{CL}. 
Explicitly, we show that $\TB$ is isomorphic to the operad of the post-Lie algebras.

\subsection{Definition of $\TB$}

Let $\mathcal L=\oplus_{n\geq 1}\mathcal L(n)$ be the suboperad of  $\Tp$ generated by the fully labeled trees.

For each $n\geq 2$ let $\mathcal{W} (n) \subset \mathcal{PT}_p (n)$ be the $\mathbb{K}$--subvector space generated by those partially labeled trees $T$ that  satisfy: 
\begin{enumerate}
	\item[(a)] the root of $T$ is unlabeled; 
	\item[(b)] if a vertex of $T$ is unlabeled, then so is its $\prec$--successor;  
	\item[(c)] each unlabeled vertex of $T$ has exactly two incoming edges. 
\end{enumerate}

We let
\begin{equation*}
\ca{LW}(1):=\ca{L}(1) \text{ and }  \ca{LW}(n):=\mathcal L(n)\oplus\mathcal W(n) \text{ for } n\geq 2. 
\end{equation*} 
We will prove that $\ca{LW}=\oplus_{n\geq 1}\ca{LW}(n)$ carries a natural operad structure.

\subsubsection{Vertex-wise action of the symmetric group} \label{sec: symm action vertex}

Let $R$ be a tree in $\ca{W}(m)$ and let $V_{unl}(R)$  denote the set of its unlabeled vertices. 
For  $v\in V_{unl}(R)$, let $R^v$ be the maximal subtree of $R$ with root $v$. 
Since $v$ has exactly two incoming edges, $R^v$ can be written as a corolla $C(v;R^v_1,R^v_2)$, where  $\{R^v_1<R^v_2\}$  is the ordered set of the maximal subtrees of $R^v$ that does not contain $v$; the order is induced by the canonical order $<$ on $R$. 

To any permutation $\sigma \in {\mathbb S}_2$ one may associate the tree $R_{\sigma}$ that is obtained from $R$  by changing the subtree $R^v$ into  
\begin{equation*}
R^v_{\sigma}:=C(v;R^v_{\sigma(1)},R^v_{\sigma(2)}).
\end{equation*}
The labeling is unchanged: if a vertex of $R^v$ has a label, then its image in $R^v_{\sigma}$ has the same label. 

More generally, to any tuple of permutations $\sigma \in \mathbb{S}_{2}^{\times |V_{unl}(R)|}$ one may associate a tree $R_{\sigma}$ obtained by applying the above construction to each vertex $v$ of $V_{unl}(R)$. 
Since the order we perform the iteration does not matter, this is well defined. 
 
For a tuple $\sigma=(\sigma_1,...,\sigma_s) \in \mathbb{S}_{2}^{\times |V_{unl}(R)|}$  we set $\epsilon(\sigma)=(-1)^{sgn(\sigma_1)+...+sgn(\sigma_s)}$, where $sgn(\sigma_i) $ is  the signature of the permutation $\sigma_i\in {\mathbb S}_{2}$.

\subsubsection{Contracting trees}

Let $T$ be a partially labeled  tree with an unlabeled vertex $v$ that is not the root, and let $v^+$ be its $\prec$--successor. 
Denote by  $Con_v(T)$ the partially labeled tree obtained from $T$ by contracting the edge linking $v$ and $v^+$. 
The resulting vertex, which comes from the merging of $v$ and $v^+$, is endowed with the same label than the vertex $v^+$ of $T$, if any.
\\

For a tree $R\in \ca{W}(n)$, we denote by $Con(R)$ the tree in $\Tp(n)$ obtained from $R$ by contracting all the edges that are bounded by two unlabeled vertices. Thus, in $Con(R)$, the root is the unique unlabeled vertex.    
For instance, 
\begin{equation*}
 Con\left(
\begin{tikzpicture}
[baseline, my circle/.style={draw, fill, circle, minimum size=3pt, inner sep=0pt}, level distance=0.5cm, 
level 2/.style={sibling distance=0.6cm}, sibling distance=0.9cm,baseline=4.5ex]
\node [my circle] {} [grow=up]
{
	child {node [my circle]  {} 
		child {node [my circle,label=right:\tiny{$5$}] {} child {node [my circle,label=right:\tiny{$4$}] {}}} 
	child {node [my circle,label=right:\tiny{$3$}] {}  }}
	child {node [my circle] {}  child {node [my circle,label=left:\tiny{$2$}] {} } 
	child {node [my circle,label=left:\tiny{$1$}] {} }}
};
\draw [-] (0,-.25) -- (0,0) ;
\end{tikzpicture} 
\right) = 
\begin{tikzpicture}
[baseline, my circle/.style={draw, fill, circle, minimum size=3pt, inner sep=0pt}, level distance=0.5cm, 
level 2/.style={sibling distance=0.6cm}, sibling distance=0.7cm,baseline=3.5ex]
\node [my circle] {} [grow=up]
{child {node [my circle,label=right:\tiny{$5$}] {} child {node [my circle,label=right:\tiny{$4$}] {}}}
	child {node [my circle,label=right:\tiny{$3$}] {}  }
	  child {node [my circle,label=left:\tiny{$2$}] {} } 
	child {node [my circle,label=left:\tiny{$1$}] {} }};
\draw [-] (0,-.25) -- (0,0) ;
\end{tikzpicture}  .
\end{equation*}

\subsubsection{The operad $\ca{LW}$}

For $m\geq 1$, $n\geq 2$  and  $1\leq i \leq m$, let
\begin{equation}\label{eq: partial right mod}
\hat{\circ}_i\co \ca{LW}(m)\ot \mathcal{W}(n) \to \ca{LW}(m+n-1) 
\end{equation}
be linear map defined as follows. 
Given $T$ in $\ca{LW}(m)$ and $R$ in $\ca{W}(n) $, for $1\leq i\leq m$, let $v$ be the vertex labeled by $i$. We set 
\begin{equation}
T\hat{\circ}_i R= 
					\begin{cases}
					\sum_{\sigma \in {\mathbb S}_{V_{unl}(R)}} \epsilon(\sigma) Con_r( T\circ_i Con(R_{\sigma})) &
					\text{\small if the $\prec$--successor of $v$ exists and  is labeled}; \\
					T\circ_i R & \text{\small otherwise}, 
					\end{cases}
\end{equation} 
where $r$ denotes the root vertex of $Con(R_{\sigma})$ seen as a sub-tree of $T\circ_i Con(R_{\sigma})$ and $V_{unl}(R)$ is the set of unlabeled vertices of $R$.  

For instance,
\[\begin{tikzpicture}
[my circle/.style={draw, fill, circle, minimum size=3pt, inner sep=0pt},  
level 2/.style={sibling distance=0.6cm,level distance=0.5cm}, sibling distance=0.6cm, level 1/.style={level distance=0.4cm},baseline=3ex]
\node {} [grow=up]
{child{node [my circle,label=left:\tiny{$1$}] {} child {node (A) [my circle,label=left:\tiny{$2$}]  {}
			child {node [my circle,label=left:\tiny{$3$}] {}}} }}; 
\end{tikzpicture}
\circ_2
\begin{tikzpicture}
[my circle/.style={draw, fill, circle, minimum size=3pt, inner sep=0pt},  
level 2/.style={sibling distance=0.6cm,level distance=0.5cm}, sibling distance=0.6cm, level 1/.style={level distance=0.4cm},baseline=3ex]
\node {} [grow=up]
{child{node [my circle] {} child {node (A) [my circle,label=left:\tiny{$2$}]  {}
		} child {node [my circle,label=left:\tiny{$1$}] {}} }};
\end{tikzpicture} 
=
\begin{tikzpicture}
[my circle/.style={draw, fill, circle, minimum size=3pt, inner sep=0pt},  
level 2/.style={sibling distance=0.6cm,level distance=0.5cm}, sibling distance=0.6cm, level 1/.style={level distance=0.4cm},baseline=3ex]
\node {} [grow=up]
{child{node [my circle, label=left:\tiny{$1$}] {} child {node (A) [my circle,label=left:\tiny{$3$}]  {}
		} child {node [my circle,label=left:\tiny{$2$}] {} child {node [my circle,label=left:\tiny{$4$}] {}}} }};
\end{tikzpicture} 
-
\begin{tikzpicture}
[my circle/.style={draw, fill, circle, minimum size=3pt, inner sep=0pt},  
level 2/.style={sibling distance=0.6cm,level distance=0.5cm}, sibling distance=0.6cm, level 1/.style={level distance=0.4cm},baseline=3ex]
\node {} [grow=up]
{child{node [my circle, label=left:\tiny{$1$}] {} child {node (A) [my circle,label=left:\tiny{$2$}] {} child {node [my circle,label=left:\tiny{$4$}] {}}}
		child {node  [my circle,label=left:\tiny{$3$}]  {}
}  }};
\end{tikzpicture} 
+
\begin{tikzpicture}
[my circle/.style={draw, fill, circle, minimum size=3pt, inner sep=0pt},  
level 2/.style={sibling distance=0.6cm,level distance=0.5cm}, sibling distance=0.6cm, level 1/.style={level distance=0.4cm},baseline=3ex]
\node {} [grow=up]
{child{node [my circle, label=left:\tiny{$1$}] {} child {node (A) [my circle,label=left:\tiny{$3$}]  {}
			child {node [my circle,label=left:\tiny{$4$}] {}}} child {node [my circle,label=left:\tiny{$2$}] {} } }};
\end{tikzpicture} 
-
\begin{tikzpicture}
[my circle/.style={draw, fill, circle, minimum size=3pt, inner sep=0pt},  
level 2/.style={sibling distance=0.6cm,level distance=0.5cm}, sibling distance=0.6cm, level 1/.style={level distance=0.4cm},baseline=3ex]
\node {} [grow=up]
{child{node [my circle, label=left:\tiny{$1$}] {} child {node (A) [my circle,label=left:\tiny{$2$}] {} }
		child {node  [my circle,label=left:\tiny{$3$}]  {}
			child {node [my circle,label=left:\tiny{$4$}] {}}}  }};
\end{tikzpicture} .\]

On the other hand, note that \eqref{eq: partial compo partial planar tree} restricts to 
\begin{equation}\label{eq: part compo LW L}
\circ_i\co \mathcal{LW}(m)\ot \mathcal{L}(n) \to \mathcal{LW}(m+n-1).
\end{equation}

\begin{prop}
 $\ca{LW}$ endowed with the partial compositions defined in \eqref{eq: part compo LW L} and \eqref{eq: partial right mod} is an operad. 
\end{prop}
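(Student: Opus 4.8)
The plan is to verify directly the three families of operad axioms --- equivariance, unitality and associativity --- for the two partial composition maps: $\circ_i$ of \eqref{eq: part compo LW L} on $\ca{LW}(m)\ot\ca{L}(n)$, and $\hat{\circ}_i$ of \eqref{eq: partial right mod} on $\ca{LW}(m)\ot\ca{W}(n)$. Together these determine the partial composition on $\ca{LW}(m)\ot\ca{LW}(n)=\ca{LW}(m)\ot\big(\ca{L}(n)\oplus\ca{W}(n)\big)$ by bilinearity, with the convention that for $n=1$ one uses $\circ_i$ on $\ca{LW}(m)\ot\ca{L}(1)$. Before anything else I would check well-definedness, namely that these maps land in $\ca{LW}$: for \eqref{eq: part compo LW L} this is the remark just preceding the statement, while for $\hat{\circ}_i$ one argues, according to whether the $\prec$--successor of the vertex labeled $i$ in $T$ is labeled, that conditions (a)--(c) pass to $T\hat{\circ}_i R$; more precisely, $T\hat{\circ}_i R$ lies in $\ca{L}(m+n-1)$ when $T\in\ca{L}(m)$ and in $\ca{W}(m+n-1)$ when $T\in\ca{W}(m)$.

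I expect equivariance and unitality to be quick. On the $\ca{L}$--summand, equivariance is inherited from the operad $\Tp$ of Proposition \ref{pro:operadtp} since $\ca{L}$ is a suboperad; on the $\ca{W}$--summand it follows once one observes that relabeling the vertices of $T$ and of $R$ commutes with every ingredient of the definition of $\hat{\circ}_i$ --- the vertex-wise $\mathbb{S}_2$--action $R\mapsto R_{\sigma}$, the contractions $Con$ and $Con_r$, and the underlying $\Tp$--composition $\circ_i$ --- all of which are purely combinatorial and merely transport labels, leaving the factor $\epsilon(\sigma)$ unchanged. The unit is the one-vertex tree labeled $1$, which is the unit of $\Tp$ and lies in $\ca{L}(1)=\ca{LW}(1)$: the unit axioms on the $\ca{L}$--side are those of $\Tp$, and on the $\ca{W}$--side $\hat{\circ}_1$ reduces to $\circ_1$ because that single vertex has no $\prec$--successor, so the second branch of the definition applies.

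The substance of the proof is associativity, in both its nested (\emph{sequential}) and disjoint (\emph{parallel}) forms. By bilinearity I would reduce to the cases in which each of the two trees inserted into $T\in\ca{LW}$ lies in $\ca{L}$ or in $\ca{W}$. When both lie in $\ca{L}$ all the compositions take place inside the suboperad $\ca{L}\subset\Tp$ and there is nothing to prove. For the remaining cases, after unwinding $\hat{\circ}_i$ into the signed sum $\sum_{\sigma}\epsilon(\sigma)\,Con_r(T\circ_i Con(R_{\sigma}))$, I would derive each associativity identity from the associativity of the $\Tp$--composition together with two elementary compatibilities between the contractions and $\circ_i$, checked directly on planar trees: (i) contracting all unlabeled--unlabeled edges of a subtree $R$ commutes with grafting $R$ into a larger tree, and the single edge joining the root of the inserted block to its $\prec$--successor --- the one that would otherwise violate condition (b) --- is precisely the edge removed by the correction $Con_r$; and (ii) the single-edge contractions $Con_v$, for $v$ running over distinct unlabeled vertices, commute with one another and with any $\circ_i$ performed away from the contracted edge. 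Granting (i) and (ii), both sides of each associativity relation become one and the same sum --- indexed by tuples of elements of $\mathbb{S}_2$ attached to the unlabeled vertices involved --- of the same family of planar trees, and the signs match because $sgn$ is additive while the sets of unlabeled vertices contributed by the two inserted trees are disjoint, so that $\epsilon$ is multiplicative. The parallel case is the easier one, since there the two insertions, with their contractions and their $\mathbb{S}_2$--sums, take place near disjoint vertices of $T$.

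The hard part will be the bookkeeping behind (ii) in the genuinely mixed sequential case: grafting a $\ca{W}$--tree into another $\ca{W}$--tree at a vertex whose $\prec$--successor is itself labeled. There the ``internal'' contraction $Con$ of the inserted tree --- which, depending on $\sigma$, reorders the labeled subtrees hanging from its unlabeled part --- interacts with the contraction $Con_r$ at the newly created root-edge, and one must check that the set of unlabeled vertices over which one sums, hence the overall sign $\epsilon(\sigma)$, together with the resulting planar trees, is independent of whether the triple composition is associated to the left or to the right; simultaneously one must check that conditions (a)--(c) survive each intermediate step. I expect essentially all of the content of the proof to live in this case and in the matching of the $\sigma$--indexed sums; everything else is routine, and a fully detailed treatment is given in \cite{PryscillaThesis}.
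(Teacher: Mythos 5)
Your proposal is correct and follows essentially the same route as the paper: both reduce the substantive work to the nested associativity in the case where the relevant vertices have labeled $\prec$--successors, unwind $\hat{\circ}_i$ into the signed sum $\sum_{\sigma}\epsilon(\sigma)\,Con_r(T\circ_i Con(R_{\sigma}))$, and deduce the identity from associativity in $\Tp$ together with the observation that $Con$ and $Con_r$ do not interfere with the graftings while the sets of unlabeled vertices (hence the $\sigma$--indexed sums and signs) on both sides agree. The paper's own argument is likewise only a sketch of this key case, declaring the rest a tedious but direct verification, so your level of detail matches it.
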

\begin{proof}
	It is a tedious but direct verification. For the reader convenience we sketch the proof of the nested associativity of the partial compositions, i.e.  we show that
	for every three trees $R, S$ and $T$ in $\ca{W}(r)$, $\ca{W}(s)$, and, respectively, in $\ca{W}(t)$
	\begin{equation*}
	 R \hat{\circ}_i( S \hat{\circ}_j T) = 	(R \hat{\circ}_i S) \hat{\circ}_{i-1+j} T,  
	\end{equation*}   
	for all $1\leq i\leq r$ and $1\leq j\leq s$. 
	
	Let $v$ and $w$ be the vertices of $R$ and $S$ labeled, respectively, by $i$ and $j$. 
	If both $v$ and $w$ have no labeled $\prec$--successor, then the involved partial compositions are the ones of the operad $\Tp$, see \eqref{eq: partial compo partial planar tree}, which proves the equality. 
	Now, let us suppose both $v$ and $w$ have a labeled $\prec$--successor (the proof for the other cases is similar).  
  In this case, recall that one has 
\begin{equation}\label{eq: compo contr}
S\hat{\circ}_jT= \sum_{\sigma \in {\mathbb S}_{V_{unl}(T)}} \epsilon(\sigma) Con_r( S\circ_j Con(T_{\sigma})) .
\end{equation} 
First, set  $P\widetilde{\circ}_j Q:=  Con_r( P\circ_j Con(Q))$ for every couple of trees $P,Q\in \ca{W}$ and observe that the equality $R {\circ}_i( S {\circ}_j T) = (R {\circ}_i S) {\circ}_{i-1+j} T$  in $\Tp$ implies that  
   \begin{equation*}
   R \widetilde{\circ}_i( S \widetilde{\circ}_j T) = 	(R \widetilde{\circ}_i S) \widetilde{\circ}_{i-1+j} T.
   \end{equation*}
Indeed, note that the second contraction $Con_r$ in \eqref{eq: compo contr} does not affect the associativity of the $\circ_k$'s since the sets of the minimal labeled angles of $Con_r(S\circ_j Con(T_{\sigma}))$ and of $S\circ_j Con(T_{\sigma})$ are the same, and  the set of inputs of $v$ and of $w$ are not concerned by these contractions. Similarly, as the contraction $Con(T)$ involves only edges between unlabeled vertices, it has no consequence on the associativity of the $\circ_k$'s. 
We conclude by observing that, since $w$ has a labeled $\prec$--successor, one has 
$V_{unl}( Con_r(S\circ_i Con(T_{\sigma}))= V_{unl}(S)$. 
\end{proof}

Let $\mathcal{I}\subset \mathcal{LW}$ be the operadic ideal generated by 
\[\Biggl \{ \begin{tikzpicture}
[baseline, my circle/.style={draw, fill, circle, minimum size=3pt, inner sep=0pt},  
level 2/.style={sibling distance=0.6cm,level distance=0.5cm}, sibling distance=0.6cm, level 1/.style={level distance=0.5cm}]
\node [my circle]  {} [grow=up]  {child {node (A) [my circle,label=left:\tiny{$2$}]  {}} 
child {node (B) [my circle,label=left:\tiny{$1$}] {}} };
\draw [-] (0,-.25) -- (0,0) ;

\end{tikzpicture} 
-
\begin{tikzpicture}
[baseline, my circle/.style={draw, fill, circle, minimum size=3pt, inner sep=0pt},  
level 2/.style={sibling distance=0.6cm,level distance=0.5cm}, sibling distance=0.6cm, level 1/.style={level distance=0.5cm}]

\node [my circle]  {} [grow=up] {child {node (A) [my circle,label=left:\tiny{$1$}]  {}
		} child {node  [my circle,label=left:\tiny{$2$}] {}} };
\draw [-] (0,-.25) -- (0,0) ;
\end{tikzpicture} ,
\begin{tikzpicture}
[baseline, my circle/.style={draw, fill, circle, minimum size=3pt, inner sep=0pt},  
level 2/.style={sibling distance=0.6cm,level distance=0.5cm}, sibling distance=0.6cm, level 1/.style={level distance=0.5cm}]

\node [my circle] {} [grow=up] { child {node (A)  [my circle] {}  child {node  [my circle,label=left:\tiny{$3$}]  {}}
			child {node  [my circle,label=left:\tiny{$2$}]  {}
		} } child {node  [my circle,label=left:\tiny{$1$}]  {}
} };
\draw [-] (0,-.25) -- (0,0) ;
\end{tikzpicture} 
-
\begin{tikzpicture}
[baseline, my circle/.style={draw, fill, circle, minimum size=3pt, inner sep=0pt},  
level 2/.style={sibling distance=0.6cm,level distance=0.5cm}, sibling distance=0.6cm, level 1/.style={level distance=0.5cm}]

\node [my circle] {} [grow=up] {child {node (A)  [my circle,label=left:\tiny{$3$}]  {}}
		child {node   [my circle] {}  child {node  [my circle,label=left:\tiny{$2$}]  {}}
			child {node  [my circle,label=left:\tiny{$1$}]  {}
} }  };
\draw [-] (0,-.25) -- (0,0) ;
\end{tikzpicture} 
-
\begin{tikzpicture}
[baseline, my circle/.style={draw, fill, circle, minimum size=3pt, inner sep=0pt},  
level 2/.style={sibling distance=0.6cm,level distance=0.5cm}, sibling distance=0.6cm, level 1/.style={level distance=0.5cm}]

\node [my circle] {}  [grow=up] {child {node (A)  [my circle] {}  child {node  [my circle,label=left:\tiny{$3$}]  {}}
			child {node  [my circle,label=left:\tiny{$1$}]  {}
		} } child {node  [my circle,label=left:\tiny{$2$}]  {}
} };
\draw [-] (0,-.25) -- (0,0) ;
\end{tikzpicture} 
 \Biggr \}. \]
 \begin{defn}\label{def: operad TBr}
 We call $\TB$ the symmetric operad $ \mathcal{LW}/ \mathcal{I}$.  
\end{defn}

For later use, we prove the following result. 

\begin{lem} \label{lem: decompo gen of LW}
	For $m\geq 2$, every generator of the $\mathbb{S}_m$--module $\ca{LW}(m)$ has a unique $\mathbb{S}$--equivariant decomposition of the form 
	\begin{equation}
	(\cdots(\cdots ((S_1 \circ_{i_1} S_2 ) \circ_{i_2} S_3) \cdots S_{k-1} )\circ_{i_{k-1}} S_k)^{\sigma},
	\end{equation}
	  where $S_j \in \ca{LW}(2)$ for $1\leq j\leq k$ and  $\sigma \in {\mathbb S}_m$. 
\end{lem}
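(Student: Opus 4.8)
The statement decomposes into two independent cases according to whether a generator $T$ of $\ca{LW}(m)$ lies in the "fully labeled" part $\ca{L}(m)$ or in the "partially labeled" part $\ca{W}(m)$, so I would treat these separately after a common reduction. The underlying idea in both cases is a size induction on $m$: I would locate a distinguished vertex of $T$ whose removal (splitting off a corolla $S_k\in\ca{LW}(2)$) expresses $T$ as $T'\circ_i S_k$ up to a permutation $\sigma$, where $T'\in\ca{LW}(m-1)$, then invoke the inductive hypothesis on $T'$. The $\sigma$ absorbs the discrepancy between the "classical re-indexation" of the partial composition $\circ_i$ and the actual labeling of $T$; this is the reason one only gets uniqueness \emph{up to $\mathbb S$-equivariance} rather than on the nose.

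\textbf{The fully labeled case.} For $T\in\ca{L}(m)$, the relevant operation is the restricted composition \eqref{eq: part compo LW L}, which is just the planar-tree grafting $\circ_i$ of $\Tp$. I would choose a leaf $v$ of $T$ that is $<$-maximal among the leaves, or more robustly, pick any vertex $v$ having no incoming edges and look at its $\prec$-successor $v^+$; the sub-corolla obtained by splitting $v$ off of $v^+$ is an element of $\ca{L}(2)$ (a two-vertex planar tree with a root and one leaf, i.e. the generator $\begin{tikzpicture}[baseline, my circle/.style={draw, fill, circle, minimum size=3pt, inner sep=0pt}, level distance=0.3cm]\node [my circle,label=left:\tiny{$1$}] {} [grow=up]{child {node [my circle,label=left:\tiny{$2$}] {}}};\end{tikzpicture}$ up to an $\mathbb S_2$-relabel). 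Writing $T = (T'\circ_{i} S_k)^{\tau}$ for the appropriate insertion point $i$ and permutation $\tau$ recording the relabeling, the tree $T'\in\ca{L}(m-1)$ is obtained by deleting $v$; apply the inductive hypothesis to $T'$ and compose the permutations. To get \emph{uniqueness}, I would argue that in any decomposition of the stated form the last factor $S_k\in\ca{LW}(2)$ must be $\circ_{i_{k-1}}$-inserted at a vertex which, after all insertions, becomes a two-valent "leaf pair"; this pins down $S_k$, $i_{k-1}$ and the action of $\sigma$ on the last block, and then one recurses. Freeness of the $\mathbb S_n$-actions on $\ca{L}(n)$ (which follows since distinct labelings of a planar tree are distinct) is what makes "unique $\mathbb S$-equivariant" meaningful, per the convention spelled out in the preamble.

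\textbf{The partially labeled case.} For $T\in\ca{W}(m)$ the generators satisfy (a)--(c): unlabeled vertices are exactly those with two incoming edges, the set of unlabeled vertices is "upward closed" toward the root, and the root is unlabeled. Here I would use the $\hat\circ_i$ operation \eqref{eq: partial right mod}, and the decomposition should terminate in factors that are the two kinds of generators of $\ca{LW}(2)$: the labeled one above, and the unlabeled corolla $\begin{tikzpicture}[baseline, my circle/.style={draw, fill, circle, minimum size=3pt, inner sep=0pt}, level distance=0.3cm, level 2/.style={sibling distance=0.5cm}, sibling distance=0.5cm]\node [my circle] {} [grow=up]{child {node [my circle,label=left:\tiny{$1$}] {}} child {node [my circle,label=left:\tiny{$2$}] {}}};\end{tikzpicture}$ (the root plus two labeled leaves). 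The strategy is to peel off, from the top, either a labeled leaf together with its $\prec$-successor (using $T'\circ_i S_k$ with $S_k$ the labeled generator and $T'$ obtained by erasing the leaf — but one must check $T'$ still satisfies (a)--(c), which it does provided the successor retains at least one incoming edge, otherwise one peels two leaves at once) or, when one reaches an unlabeled vertex both of whose children are leaves, peel off that entire unlabeled sub-corolla via $\hat\circ$. Condition (c) guarantees these unlabeled corollas are binary, condition (b) guarantees the remaining tree after contraction is again in $\ca{W}$, and the interplay with the $Con$/$Con_r$ operations inside $\hat\circ_i$ is exactly what is needed so that the composite, after re-expanding, reproduces $T$. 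I would organize the induction so that at each step the number of vertices strictly decreases, reaching $\ca{LW}(2)$ as the base case.

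\textbf{Main obstacle.} The bookkeeping of the permutation $\sigma$ is the delicate point: the partial composition uses the rigid "classical re-indexation" formula (displayed in the definition of $\circ_i^{\phi}$), whereas $T$ comes with an arbitrary labeling, so at every peeling step the labels get shuffled, and I must track these shuffles carefully and verify they compose to a single well-defined $\sigma\in\mathbb S_m$ independent of the order of peeling — and, for uniqueness, that two different peeling sequences differ only by $\mathbb S$-equivariant moves (reassociation via the operad axioms plus the symmetric-group equivariance), not by a genuinely different shape. In the $\ca{W}$ case there is the extra subtlety that $\hat\circ_i$ is a \emph{sum} (over $\sigma\in\mathbb S_{V_{unl}}$) with signs, so "decomposition" must be read at the level of individual summands before applying $\epsilon(\sigma)$; I would phrase the lemma's decomposition as holding for each planar-tree generator and note that it is compatible with the vertex-wise $\mathbb S_2$-action of \S\ref{sec: symm action vertex}. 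Once the correct peeling vertex is identified canonically (e.g. "the $<$-last leaf" together with a tie-breaking rule), both existence and uniqueness fall out of the induction; the rest is the "tedious but direct verification" already acknowledged in the surrounding proofs.
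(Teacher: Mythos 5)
There is a genuine gap, and it lies at the heart of your fully labeled case. You peel off a leaf $v$ with $\prec$--successor $v^{+}$ and assert $T=(T'\circ_{i}S_{k})^{\tau}$, treating $\circ_{i}$ as plain grafting; but by \eqref{eq: explicit partial compo partial planar tree} the composition $T'\circ_{i}S_{k}$, which inserts the labeled $2$--chain $L$ at the vertex $v^{+}$, is a \emph{sum} over all maps $\phi$ distributing the incoming edges of $v^{+}$ between the two vertices of the chain. The claimed equality therefore fails whenever $v^{+}$ has any incoming edge besides the one from $v$, i.e.\ already for $m=3$: one term of $L\circ_{2}L$ is a labeled corolla but the other is a $3$--chain (compare \eqref{eq: exple compo prt}), and a short check of all compositions of elements of $\ca{LW}(2)$ (including those through $\hat{\circ}_{i}$) shows that no single iterated composition, under any $\sigma$, equals the labeled $3$--corolla. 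The decomposition is unavoidably a linear combination with correction terms; this is exactly how the paper proceeds, first splitting a labeled tree into corollas inserted at leaves (where $\circ_{i}$ genuinely is single-term grafting), then decomposing corollas inductively via $C_{3}=L\circ_{2}L-L\circ_{1}L$ and $C_{n+1}=L\circ_{2}C_{n}-\sum_{i}(C_{n}\circ_{i}L)^{\sigma_{i}}$. Your uniqueness sketch (``the last factor must become a two-valent leaf pair'') is built on the same false single-term premise, so it collapses together with the existence argument.

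Your treatment of the $\ca{W}$ case has a related problem: peeling ``from the top'' with $\hat{\circ}_{i}$ forces you through the signed sums over vertexwise $\mathbb{S}_{2}$--actions and the contractions $Con$, $Con_{r}$ of \eqref{eq: partial right mod}, and the proposed fix (reading the decomposition ``at the level of individual summands before applying $\epsilon(\sigma)$'') is not a well-defined operation on the output of $\hat{\circ}_{i}$. None of this machinery is needed: since the root of $T\in\ca{W}(m)$ is unlabeled with exactly two incoming edges, the paper splits at the \emph{root}, $T=\bigl(C(\bullet;1,2)\circ_{2}T_{2}\bigr)\circ_{1}T_{1}$ as in \eqref{eq: decompo in L}, where both insertions happen at labeled leaves and hence are single-term, and then recurses on $T_{1},T_{2}$ (falling back on the corolla case when these are fully labeled). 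Combined with the corrected corolla induction above, this yields existence of the decomposition, and the $\mathbb{S}$--equivariant uniqueness then comes from the uniqueness of the corolla splitting and the freeness of the symmetric group actions, rather than from a leaf-peeling recursion.
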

\begin{proof}
Recall that every labeled tree $T \in \ca{L}(m)$ has a unique $\mathbb{S}$--equivariant decomposition into corollas, that is, there is a $k\geq 1$ and there are corollas $Q_j \in \ca{L}(m_j)$ for $1 \leq j \leq k$ such that
\[
T = (\cdots(\cdots ((Q_1 \circ_{i_1} Q_2 ) \circ_{i_2} Q_3) \cdots Q_{k-1})\circ_{i_{k-1}} Q_k).
\] 
The corollas $Q_j \in \ca{L}(m_j)$ and the $i_j$'s are unique up to the action of symmetric groups $\mathbb{S}_{m_j}$. 
Let us prove the lemma for corollas in $\ca{L}(n)$. 
One has 
\begin{equation}
\begin{tikzpicture}
[baseline, my circle/.style={draw, fill, circle, minimum size=3pt, inner sep=0pt}, level distance=0.5cm, 
level 2/.style={sibling distance=0.8cm}, sibling distance=0.6cm]
\node [my circle,label=left:\tiny{$3$}] {} [grow=up]
{ child {node [my circle,label=left:\tiny{$2$}]  {}}
	child {node [my circle,label=left:\tiny{$1$}]  {}}};
\draw [-] (0,-.25) -- (0,0) ;
\end{tikzpicture} =\begin{tikzpicture}
[baseline, my circle/.style={draw, fill, circle, minimum size=3pt, inner sep=0pt}, level distance=0.5cm, 
level 2/.style={sibling distance=0.8cm}, sibling distance=0.6cm]
\node [my circle,label=left:\tiny{$2$}] {} [grow=up]
{child {node [my circle,label=left:\tiny{$1$}]  {}} };
\draw [-] (0,-.25) -- (0,0) ;
\end{tikzpicture} \circ_2 \begin{tikzpicture}
[baseline, my circle/.style={draw, fill, circle, minimum size=3pt, inner sep=0pt}, level distance=0.5cm, 
level 2/.style={sibling distance=0.8cm}, sibling distance=0.6cm]
\node [my circle,label=left:\tiny{$2$}] {} [grow=up]
{child {node [my circle,label=left:\tiny{$1$}]  {}} };
\draw [-] (0,-.25) -- (0,0) ;
\end{tikzpicture} - \begin{tikzpicture}
[baseline, my circle/.style={draw, fill, circle, minimum size=3pt, inner sep=0pt}, level distance=0.5cm, 
level 2/.style={sibling distance=0.8cm}, sibling distance=0.6cm]
\node [my circle,label=left:\tiny{$2$}] {} [grow=up]
{child {node [my circle,label=left:\tiny{$1$}]  {}} };
\draw [-] (0,-.25) -- (0,0) ;
\end{tikzpicture} \circ_1 \begin{tikzpicture}
[baseline, my circle/.style={draw, fill, circle, minimum size=3pt, inner sep=0pt}, level distance=0.5cm, 
level 2/.style={sibling distance=0.8cm}, sibling distance=0.6cm]
\node [my circle,label=left:\tiny{$2$}] {} [grow=up]
{child {node [my circle,label=left:\tiny{$1$}]  {}} };
\draw [-] (0,-.25) -- (0,0) ;
\end{tikzpicture}
.
\end{equation}
So the result holds for $n=2$. 
For $n\geq 2$, one has  
\begin{equation*}
\begin{tikzpicture}[baseline, my circle/.style={draw, fill, circle, minimum size=3pt, inner sep=0pt}, level distance=0.5cm, 
level 2/.style={sibling distance=0.8cm}, sibling distance=0.6cm]
\node [my circle,label=left:\tiny{$n+1$}] {} [grow=up]
{child {node [my circle,label=above:\tiny{$n$}]  {}} 
	child {node [label=above:\tiny{$\cdots$}]  {}}
	child {node [my circle,label=above:\tiny{$2$}]  {}}
	child {node [my circle,label=above:\tiny{$1$}]  {}}};
\draw [-] (0,-.25) -- (0,0) ;
\end{tikzpicture}
=
 \begin{tikzpicture}[baseline, my circle/.style={draw, fill, circle, minimum size=3pt, inner sep=0pt}, level distance=0.5cm, 
level 2/.style={sibling distance=0.8cm}, sibling distance=0.6cm]
\node [my circle,label=left:\tiny{$2$}] {} [grow=up]
{child {node [my circle,label=left:\tiny{$1$}]  {}} };
\draw [-] (0,-.25) -- (0,0) ;
\end{tikzpicture} 
\circ_2 
\begin{tikzpicture}[baseline, my circle/.style={draw, fill, circle, minimum size=3pt, inner sep=0pt}, level distance=0.5cm, 
level 2/.style={sibling distance=0.8cm}, sibling distance=0.6cm]
\node [my circle,label=left:\tiny{$n$}] {} [grow=up]
{child {node [my circle,label=above:\tiny{$n-1$}]  {}} 
child {node [label=above:\tiny{$\cdots$}]  {}}
child {node [my circle,label=above:\tiny{$2$}]  {}}
child {node [my circle,label=above:\tiny{$1$}]  {}}};
\draw [-] (0,-.25) -- (0,0) ;
\end{tikzpicture} 
-\sum_{1\leq i \leq n-1} \Biggl (  
\begin{tikzpicture}
[baseline, my circle/.style={draw, fill, circle, minimum size=3pt, inner sep=0pt}, level distance=0.5cm, 
level 2/.style={sibling distance=0.8cm}, sibling distance=0.6cm]
\node [my circle,label=left:\tiny{$n$}] {} [grow=up]
{child {node [my circle,label=above:\tiny{$n-1$}]  {}} 
child {node [label=above:\tiny{$\cdots$}]  {}}
child {node [my circle,label=above:\tiny{$2$}]  {}}
child {node [my circle,label=above:\tiny{$1$}]  {}}};
\draw [-] (0,-.25) -- (0,0) ;
\end{tikzpicture} 
\circ_i  
\begin{tikzpicture}[baseline, my circle/.style={draw, fill, circle, minimum size=3pt, inner sep=0pt}, level distance=0.5cm, 
level 2/.style={sibling distance=0.8cm}, sibling distance=0.6cm]
\node [my circle,label=left:\tiny{$2$}] {} [grow=up]
{child {node [my circle,label=left:\tiny{$1$}]  {}} };
\draw [-] (0,-.25) -- (0,0) ;
\end{tikzpicture}
 \Biggr )^{\sigma_i}, 
\end{equation*}
where $\sigma_i\in {\mathbb S}_{n+1}$ is given by $\sigma_i(1)=i$, $\sigma_i(i-k+1)=i-k$, $\sigma_i(i+m)=i+m$, $1 \leq k \leq i-1$, $1 \leq m \leq n+1-i$.
The result follows by induction on $n$. 

Let us prove the lemma for $\w(m)$. Note that the statement is true for $\w(2)$. 
Let $m\geq 3$ and $T$ be a tree of $\w(m)$. By definition the root of $T$ is unlabeled, so there are two unique trees  $T_1$ and $T_2$ of $\ca{LW}$ such that 
\begin{equation} \label{eq: decompo in L}	
T= \Biggl ( 
\begin{tikzpicture} 
[baseline, my circle/.style={draw, fill, circle, minimum size=3pt, inner sep=0pt}, level distance=0.5cm, 
level 2/.style={sibling distance=0.8cm}, sibling distance=0.6cm]
\node [my circle] {} [grow=up]
{child {node [my circle,label=above:\tiny{$2$}]  {}}
child {node [my circle,label=above:\tiny{$1$}]  {}}};
\draw [-] (0,-.25) -- (0,0) ;
\end{tikzpicture}
\circ_2 T_2 \Biggr )  \circ_1 T_1. 
\end{equation} 
Possibly,  either $T_1$ or $T_2$ is the unit tree of $\ca{L}(1)$, in which case the decomposition simplifies.    
If both $T_1$ and $T_2$ or fully labeled (\ie they are trees of $\ca{L}$), then the result follows from the previous case. 
Otherwise, one proceeds by induction, observing that for the tree(s) $T_i$ of $\w(m_i)$ one has $m_i<m$. 
\end{proof}

\subsection{Relating $\TB$ with other operads}

In this subsection we will make explicit how the operad $\TB$ relates to a few other well known algebraic operads.

\begin{thm}\label{thm: isom}
	The operad $\TB$ is isomorphic to the operad $\ca{P}ost\ca{L}ie$. 
\end{thm}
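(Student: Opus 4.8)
The plan is to exhibit an explicit isomorphism of operads $\TB \to \ca{P}ost\ca{L}ie$, using the presentation of $\ca{P}ost\ca{L}ie$ by two binary generators (the bracket $[-,-]$ and the product $\triangleright$) subject to the relations \eqref{eq:pl1} and \eqref{eq:pl2} together with the Jacobi identity for $[-,-]$. First I would identify, inside $\ca{LW}(2)$, the two generating trees: the fully labeled corolla with two leaves, call it $\mathfrak{b}$ (to be mapped to the Lie bracket, or rather to a suitable linear combination giving an antisymmetric operation), and the tree in $\ca{W}(2)$ consisting of an unlabeled root with two labeled leaves, call it $\mathfrak{t}$ (to be mapped to $\triangleright$). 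The key structural input is Lemma \ref{lem: decompo gen of LW}: every generator of $\ca{LW}(m)$ decomposes, uniquely up to $\mathbb{S}$-equivariance, as an iterated partial composition of elements of $\ca{LW}(2) = \ca{L}(2)\oplus\ca{W}(2)$. This means $\ca{LW}$ is generated as an operad by $\ca{LW}(2)$, hence so is its quotient $\TB$; therefore a morphism out of $\TB$ is determined by where $\mathfrak{b}$ and $\mathfrak{t}$ go, and I need only check that the relations defining $\mathcal{I}$ are sent to zero and that the induced map is bijective.

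The concrete steps I would carry out in order are: (1) Define $F\co \ca{LW} \to \operatorname{End}_V$-type data abstractly by sending $\mathfrak{b}$ to the free antisymmetric binary operation and $\mathfrak{t}$ to the free binary operation, i.e.\ define a morphism from the free operad on these two generators, and factor it through $\ca{LW}$ using the decomposition lemma; equivalently, directly construct a morphism $\ca{LW} \to \ca{P}ost\ca{L}ie$ by sending each generating tree to the corresponding operadic element and checking it is well defined (independent of the chosen $\mathbb{S}$-equivariant decomposition, which is where one uses the operad axioms for $\ca{LW}$). (2) Verify that the three generators of the ideal $\mathcal{I}$ map to zero in $\ca{P}ost\ca{L}ie$: the first generator (the antisymmetrization relation $\mathfrak{b}(1,2) - \mathfrak{b}(2,1)$-type tree difference, read off from the picture) forces $\mathfrak{b}$ to be antisymmetric; the second generator, a difference of three depth-two trees, should unwind precisely to relation \eqref{eq:pl2}, namely $[x,y]\triangleright z = \mathrm{a}_{\triangleright}(x,y,z) - \mathrm{a}_{\triangleright}(y,x,z)$, once one translates the grafting combinatorics (the sum over maps $\phi$ in \eqref{eq: explicit partial compo partial planar tree} producing the associator terms) into operadic identities. (3) Conversely, build a morphism $G\co \ca{P}ost\ca{L}ie \to \TB$ by sending $[-,-] \mapsto [\mathfrak{b}]$ and $\triangleright \mapsto [\mathfrak{t}]$, and check the three defining relations of $\ca{P}ost\ca{L}ie$ hold in $\TB$: Jacobi for $[\mathfrak{b}]$ follows from the second ideal generator (the one with three trees — on reflection this is the Jacobi-type relation) and antisymmetry from the first; relation \eqref{eq:pl1}, the fact that $\mathfrak{t}$ acts by derivations on $\mathfrak{b}$, must be checked to be a consequence of the operad structure of $\ca{LW}$ itself — this is the content of the modified composition $\hat\circ_i$ with its $\operatorname{Con}_r$ contraction and the signed sum over $\mathbb{S}_{V_{unl}}$, so it should hold already in $\ca{LW}$ before quotienting; relation \eqref{eq:pl2} is the second ideal generator. (4) Check $F\circ G = \mathrm{id}$ and $G\circ F = \mathrm{id}$ on generators, which suffices since both operads are generated in arity $2$.

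The main obstacle I anticipate is step (2)–(3): correctly matching the combinatorics of the grafting operation — in particular the summation over all maps $\phi\co In(i)\to Ang_{lab}^{min}(R)$ in \eqref{eq: explicit partial compo partial planar tree}, and the signed, contracted composition $\hat\circ_i$ used on the $\ca{W}$-part — with the algebraic identities \eqref{eq:pl1} and \eqref{eq:pl2}. Concretely, one must verify that grafting a leaf of $\mathfrak{t}$ onto the two leaves of another copy of $\mathfrak{t}$, summed over the two possible targets, reproduces the associator $\mathrm{a}_{\triangleright}$, and that the derivation property \eqref{eq:pl1} is exactly the statement that composing $\mathfrak{t}$ with $\mathfrak{b}$ on the left (as an input) equals the two-term expression coming from the sum-over-$\phi$ rule — this is presumably why $\ca{W}$ was built from unlabeled binary vertices and why the contraction $\operatorname{Con}$ appears. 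I would also need to be careful that the map is well defined on $\TB$, i.e.\ that the chosen decomposition in Lemma \ref{lem: decompo gen of LW} and the relations generate exactly the kernel and nothing more; dimension/Poincaré-series bookkeeping (comparing $\dim \TB(n)$ with the known dimensions of $\ca{P}ost\ca{L}ie(n)$, which count labeled planar rooted trees / magmatic-type structures) would serve as a useful sanity check and, if a clean surjectivity argument is available, could replace explicitly constructing the inverse $G$.
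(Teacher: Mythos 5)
Your overall architecture is the paper's: define morphisms in both directions on the arity-$2$ generators, use Lemma \ref{lem: decompo gen of LW} to get a well-defined morphism out of $\ca{LW}$ (the paper's section $g$), check that the defining relations are killed in each direction, and conclude the induced maps on the quotients are mutually inverse. The problem is that your dictionary between trees and operations is inverted, and the verification in your steps (2)--(3) is built on that inversion. In $\ca{LW}(2)=\ca{L}(2)\oplus\ca{W}(2)$ the fully labeled elements are the two-vertex trees (there is no fully labeled two-leaf corolla in arity $2$), and it is this labeled two-vertex tree --- i.e.\ grafting --- that must go to $\triangleright$, while the corolla with \emph{unlabeled} root (the $\ca{W}(2)$ element) must go to the Lie bracket; this is the paper's map $h$ and is also forced by Remark \ref{rem: free postlie}, where the Lie product of generators is the class of $C(\bullet;R,S)$ and $R\triangleright S$ is the grafting. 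With your assignment ($\ca{L}(2)$-element $\mapsto [-,-]$, unlabeled-root corolla $\mapsto \triangleright$) the check fails: the ideal $\ca{I}$ has two (not three) generators, both built entirely from unlabeled binary vertices, and they impose the arity-$2$ symmetry constraint on the corolla together with an arity-$3$ Jacobi-type relation; under your assignment these would force a symmetry and a Jacobi identity on $\triangleright$, which do not hold in $\ca{P}ost\ca{L}ie$, and conversely nothing in $\TB$ would make your image of $[-,-]$ satisfy the Lie relations.

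There is a second, related mismatch: you attribute \eqref{eq:pl1} to the contracted, signed composition $\hat{\circ}_i$ and \eqref{eq:pl2} to the second ideal generator (while in the same sentence also calling that generator the Jacobi relation, so the proposal contradicts itself there). It is the other way around. The derivation rule \eqref{eq:pl1} is what the plain grafting composition \eqref{eq: explicit partial compo partial planar tree} already encodes, because incoming edges may only be attached at \emph{labeled} minimal angles, so grafting onto a bracket-corolla distributes over its two labeled children and never hits the unlabeled root; whereas \eqref{eq:pl2} is exactly what $\hat{\circ}_i$ with its contraction $Con_r$ and signed sum over $\mathbb{S}_2^{\times|V_{unl}|}$ produces (compare the four-term signed example displayed right after its definition). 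The ideal contributes only the Lie-type relations on the unlabeled vertices, and the antisymmetry of the bracket must also be checked in arity $2$, not only the arity-$3$ relations. Finally, the suggested dimension/Poincar\'e-series shortcut is not a substitute for constructing the inverse, since computing $\dim\TB(n)$ requires controlling the ideal $\ca{I}$ in all arities, which is essentially the same work; the paper instead uses the uniqueness in Lemma \ref{lem: decompo gen of LW} to define a section of $h$ and to verify the two composites are the identity. Once the generator assignment and the relation bookkeeping are corrected, the remainder of your plan coincides with the paper's proof.
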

\begin{proof}
	Recall that the operad   $\ca{P}ost\ca{L}ie=  \ca{F}(E)/(R)$ 
	 is a linear quadratic operad generated by two operations $E=E(2)= \Bbbk[\mathbb{S}_2] \langle [-,-] ,\triangleright \rangle$ and relations $R = R_{Lie} \oplus R_{r} \oplus R_{l}$ where  
	$R_{Lie}$ are the Lie relations (antisymmetry and Jacobi) for $[-,-]$ and  $R_{r}$ and $R_{l}$ correspond to the right and left post-Lie relation  \eqref{eq:pl1} and \eqref{eq:pl2} respectively:  
	\begin{equation*}
	\triangleright \circ_2 [-,-] - [-,-] \circ_1 \triangleright - [-,-] \circ_2 (\triangleright\cdot(21)) \text{ and } 
	\end{equation*}	
	\begin{equation*}
	\triangleright \circ_1 [-,-] - {\mathrm a}_{\triangleright} +{\mathrm a}_{\triangleright}\cdot(213),
	\end{equation*}
	where $ {\mathrm a}_{\triangleright}= \triangleright \circ_1 \triangleright - \triangleright \circ_2 \triangleright$.

Let us define horizontal maps in the diagram 
\begin{equation*}
\begin{tikzpicture}[>=stealth,thick,draw=black!50, arrow/.style={->,shorten >=1pt}, point/.style={coordinate}, pointille/.style={draw=red, top color=white, bottom color=red},scale=1,baseline={([yshift=-.5ex]current bounding box.center)}]
\matrix[row sep=9mm,column sep=26mm,ampersand replacement=\&]
{
	\node (00) {$\ca{F}(E)$} ; \&   \node (01){$\ca{LW}$} ; \\
	\node (10) {$\ca{P}ost\ca{L}ie$} ; \&   \node (11){$\TB=\ca{LW}/\ca{I}$} ; \\
}; 
\path 
(00)     edge[left,->>]      node {$p$}  (10)
([yshift=3pt] 00.east)     edge[above,arrow]      node {$h$}  ([yshift=3pt] 01.west)
([yshift=-3pt] 01.west)     edge[below,arrow]      node {$g$}  ([yshift=-3pt] 00.east)
(01)     edge[right,->>]      node {$\pi$}  (11)	
([yshift=3pt] 10.east)     edge[above,arrow]      node {$\overline{h}$}  ([yshift=3pt] 11.west)
([yshift=-3pt] 11.west)     edge[below,arrow]      node {$\overline{g}$}  ([yshift=-3pt]10.east)
; 
\end{tikzpicture}
\end{equation*} 

Let
\begin{equation*}
	h \co   \ca{F}(E) \to \ca{LW} 
\end{equation*}
be the unique morphism of operads such that 
\begin{equation*}
h(	[-,-])=  
\begin{tikzpicture}
[baseline, my circle/.style={draw, fill, circle, minimum size=3pt, inner sep=0pt}, level distance=0.5cm, 
level 2/.style={sibling distance=0.8c}, sibling distance=0.6cm,baseline=2ex]
\node [my circle, yshift=0.2cm] {} [grow=up]
{child {node [my circle,label=left:\tiny{$2$}]  {}} 
	child {node [my circle,label=left:\tiny{$1$}]  {}} };
\draw [-] (0,-.05) -- (0,0.2) ;
\end{tikzpicture} 
\text{ and  } ~~
{h}(\triangleright) = 
\begin{tikzpicture}
[baseline, my circle/.style={draw, fill, circle, minimum size=3pt, inner sep=0pt}, level distance=0.5cm, 
level 2/.style={sibling distance=0.8cm}, sibling distance=0.6cm,baseline=2ex]
\node [my circle, yshift=0.2cm, label=left:\tiny{$2$}] {} [grow=up]
{ child {node [my circle,label=left:\tiny{$1$}]  {}} };
\draw [-] (0,-.05) -- (0,0.2) ;
\end{tikzpicture}.
\end{equation*}
A direct inspection shows that   $\pi h_3(R_{Jac})=\pi h_3(R_{r}) =\pi h_3(R_{l})  =0$, so $\pi h$ induces a morphism $\overline{h} \co  \mathcal{P}ost\mathcal{L}ie \to \TB$. 
Let 
\begin{equation*}
g_n \co \ca{LW}(n) \to \ca{F}(E)(n)
\end{equation*}
be  as follows. 
Let 
$g_1 (\bullet) = 1 \in \mathbb K =\ca{P}ost\ca{L}ie(1)$ and $g_2 = h_2^{-1}$. 
For $n\geq 3$ and a generator $T \in \ca{LW}(n)$ recall the decomposition of Lemma \ref{lem: decompo gen of LW} and let 
\begin{equation*}
g_{n}(T) :=(\cdots ((g_2(S_1) \circ_{i_1} g_2(S_2) ) \circ_{i_2} \cdots \circ_{i_{k-1}} g_2(S_k))\cdot {\sigma}. 
\end{equation*}
We then extend it by linearity to get $g_n$. 
By uniqueness of the decomposition given in Lemma \ref{lem: decompo gen of LW}, $g_n$ is well-defined and is, moreover, a morphism of operads. 
Also, it is a section of $h_n$ \ie  $h_ng_n=id_n$. 
A direct computation shows that $pg_3(\ca{I})=0$. 
Therefore $p g$ induces a morphism $\overline{g}\co \TB \to \ca{P}ost\ca{L}ie$.

Let us prove that 
\begin{equation*}
\overline{h}_n\overline{g}_n=id_n \text{ and } \overline{g}_n\overline{h}_n=id_n  \text{ for } n\geq 2. 
\end{equation*} 
The first identity follows from  $h_ng_n=id_n$. Indeed, 
for $n\geq 3$ and $[T]\in \TB(n)$, one has 
\begin{align*}
\overline{h}_n \overline{g}_n(T+\ca{I})  =\overline{h}_npg_n(T) = \overline{h}_n(g_n(T)+R) = \pi h_n(g_n(T))  
=\pi (T). 
\end{align*}  
The other identity is obtained similarly: for $n\geq 2$, any $[T] \in \ca{P}ost\ca{L}ie(n)$ can be written as $[T]=\sum k_iT_i + R$, where each $T_i\in \ca{F}(E)(n)$  is a composition of $Q^i_j\in E(2)$. 
 Therefore,  $\overline{h}_{n}([T])=\sum k_ih_{n}(T_i)$,  where each $h_{n}(T_i)$ can be written as a composition, in $\ca{LW}(n)$,  of $h_2(Q^i_j)$. By unicity of the decomposition of Lemma \ref{lem: decompo gen of LW} and by definition of $g_{n}$, one obtains the result. 
\end{proof}

\begin{rem}\label{rem: free postlie}
Given an operad $\mathcal{O}$ and a vector space $V$, we denote by $\mathcal{O}(V)$ the free $\mathcal{O}$--algebra generated by $V$. It is explicitly given by $ \mathcal{O}(V)= \bigoplus_{n\geq 0} \mathcal{O}(n)\ot_{\mathbb{S}_n} V^{\ot n}$. 
By Theorem \ref{thm: isom}, we know that $\TB(\K)$ is the free post-Lie algebra on $\K$, which is the vector space generated by trees of $\TB$, putting aside their labelings. In other words, if we let $\K=\K<\bsq>$ for a generator $\bsq$, then $\TB(\K)$ is generated by the set 
	\begin{equation*}\label{eq: set gen of free postlie}
	\mathcal{G}= \bigg\{
	 \begin{tikzpicture}
	[  level distance=0.5cm, level 2/.style={sibling distance=0.6cm}, sibling distance=0.6cm,baseline=2.5ex,level 1/.style={level distance=0.4cm}]
	\node [] {} [grow=up]
	{	
		child {node [sq]  {}
		}
	};
	\end{tikzpicture} 
	,
	\begin{tikzpicture}
	[  level distance=0.5cm, level 2/.style={sibling distance=0.6cm}, sibling distance=0.6cm,baseline=2.5ex,level 1/.style={level distance=0.4cm}]
	\node [] {} [grow=up]
	{	
		child {node [sq]  {}
			child {node [sq]  {} 
			}
		}
	};
	\end{tikzpicture} 
	,
	\begin{tikzpicture}
	[  level distance=0.5cm, level 2/.style={sibling distance=0.6cm}, sibling distance=0.6cm,baseline=2.5ex,level 1/.style={level distance=0.4cm}]
	\node [] {} [grow=up]
	{	
		child {node [sq]  {}
			child {node [sq]  {} }
			child {node [sq]  {} 
			}
		}
	};
	\end{tikzpicture} 
	,
	\begin{tikzpicture}
	[  level distance=0.5cm, level 2/.style={sibling distance=0.6cm}, sibling distance=0.6cm,baseline=2.5ex,level 1/.style={level distance=0.4cm}]
	\node [] {} [grow=up]
	{	
		child {node [sq]  {}
			child {node [sq]  {} 
				child {node [sq]  {} 
			}}
		}
	};
	\end{tikzpicture} 
	,
	\begin{tikzpicture}
	[  level distance=0.5cm, level 2/.style={sibling distance=0.6cm}, sibling distance=0.6cm,baseline=2.5ex,level 1/.style={level distance=0.4cm}]
	\node [] {} [grow=up]
	{	
		child {node [sq]  {}
			child {node [sq]  {} }
			child {node [sq]  {} 
				child {node [sq]  {} }
			}
		}
	};
	\end{tikzpicture} 
	,
	\begin{tikzpicture}
	[  level distance=0.5cm, level 2/.style={sibling distance=0.6cm}, sibling distance=0.6cm,baseline=2.5ex,level 1/.style={level distance=0.4cm}]
	\node [] {} [grow=up]
	{	
		child {node [sq]  {}
			child {node [sq]  {} 
				child {node [sq]  {} } }
			child {node [sq]  {} }
		}
	};
	\end{tikzpicture} 
	,
	\begin{tikzpicture}
	[  level distance=0.5cm, level 2/.style={sibling distance=0.6cm}, sibling distance=0.6cm,baseline=2.5ex,level 1/.style={level distance=0.4cm}]
	\node [] {} [grow=up]
	{	
		child {node [my circle]  {}
			child {node [sq]  {} }
			child {node [sq]  {} 
				child {node [sq]  {} }
			}
		}
	};
	\end{tikzpicture} 
	,
	\begin{tikzpicture}
	[  level distance=0.5cm, level 2/.style={sibling distance=0.6cm}, sibling distance=0.6cm,baseline=2.5ex,level 1/.style={level distance=0.4cm}]
	\node [] {} [grow=up]
	{	
		child {node [sq]  {}
			child {node [sq]  {} }
			child {node [sq]  {} }
			child {node [sq]  {} }
		}
	};
	\end{tikzpicture} 
	, 
	\begin{tikzpicture}
	[  level distance=0.5cm, level 2/.style={sibling distance=0.6cm}, sibling distance=0.6cm,baseline=2.5ex,level 1/.style={level distance=0.4cm}]
	\node [] {} [grow=up]
	{	
		child {node [sq]  {}
			child {node [sq]  {} 
				child {node [sq]  {} 
					child {node [sq]  {} 
			}}}
		}
	};
	\end{tikzpicture} 
	,\dots \bigg\}.
	\end{equation*}
The Lie product of two generators $R$ and $S$ is the class of the tree $C(\bullet; R,S)$; their post-Lie product $R\triangleright S$ is the grafting obtained from the composition in $\TB$,   
	\begin{equation*}  
	\left(
	\begin{tikzpicture}
	[  level distance=0.5cm, level 2/.style={sibling distance=0.6cm}, sibling distance=0.6cm,baseline=2.5ex,level 1/.style={level distance=0.4cm}]
	\node [] {} [grow=up]
	{	
		child {node [my circle,label=left:\tiny{$2$}]  {}
			child {node [my circle,label=left:\tiny{$1$}]  {} 
			}
		}
	};
	\end{tikzpicture} 
	\circ_2 S \right) \circ_1 R,  
	\end{equation*}
by putting any labeling on $S$ and $R$, composing and then forgetting the labeling. 
\end{rem}
In \cite{CL} Chapoton and Livernet constructed the operad $\ca{RT}$ controlling the symmetric brace algebras together with an isomorphism  $\Phi\co \ca{P}re\ca{L}ie \to  \ca{RT}$. 
As one has a projection $p\co \ca{P}ost\ca{L}ie \to \ca{P}re\ca{L}ie$ defined by killing the Lie bracket $[-,-]$, it is natural to ask if it is possible to recover the operad $\ca{RT}$ from $\TB$.\\     
To answer this question, recall, in short, that the operad $\ca{RT}$ is as follows (see  \cite[Section 1.5]{CL} for details). 
The underlying $\mathbb{S}$--module of $\ca{RT}$ is generated by (isomorphism classes of) labeled non-planar rooted trees. 
The operadic composition is defined similarly to the one in \eqref{eq: partial compo partial planar tree} 
except that, since we work with non-planar trees, the map $\phi$ in \eqref{eq: explicit partial compo partial planar tree} is seen as a map from $In(i)$ to $V(R)$, and the condition about the order when grafting the trees is forgotten.  
\\
Said that, consider the  operadic ideal $\mathcal{J}$ of $\TB$ generated by the class of 
\[C(\bullet;1,2)=
\begin{tikzpicture} 
[baseline, my circle/.style={draw, fill, circle, minimum size=3pt, inner sep=0pt}, level distance=0.4cm, 
level 2/.style={sibling distance=0.8cm}, sibling distance=0.6cm]
\node [my circle] {} [grow=up]
{child {node [my circle,label=above:\tiny{$2$}]  {}}
	child {node [my circle,label=above:\tiny{$1$}]  {}}};
\draw [-] (0,-.25) -- (0,0) ;
\end{tikzpicture}.
\] 

\begin{prop} \label{prop: GO2}
	The operad $\TB / \mathcal{J}$ is canonically isomorphic to the operad $\ca{RT}$. Moreover, the following diagram commutes 
	\begin{equation*}
	\begin{tikzpicture}[>=stealth,thick,draw=black!50, arrow/.style={->,shorten >=1pt}, point/.style={coordinate}, pointille/.style={draw=red, top color=white, bottom color=red},scale=1,baseline={([yshift=-.5ex]current bounding box.center)}]
	\matrix[row sep=9mm,column sep=26mm,ampersand replacement=\&]
	{
		\node (00) {$\ca{P}ost\ca{L}ie$} ; \&   \node (01){$\TB$} ; \\
		\node (10) {$\ca{P}re\ca{L}ie$} ; \&   \node (11){$\ca{RT}$;} ; \\
	}; 
	\path
	(00)     edge[left,->>]      node {$p$}  (10)
	(00)     edge[above,arrow]      node {$\overline{h}$}  (01)
	(01)     edge[right,->>]      node {}  (11)	
	(10)     edge[above,arrow]      node {$\Phi$}  (11)
	; 
	\end{tikzpicture}
	\end{equation*} 
	here the vertical maps are the canonical projections, $\overline{h}$ is the isomorphism of Theorem \ref{thm: isom} and $\Phi$ is the isomorphism from \cite[Theorem 1.9]{CL}. 
\end{prop}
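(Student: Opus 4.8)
The plan is to exhibit $\TB/\mathcal{J}$ and $\ca{RT}$ as quotients of operads that are already known to be isomorphic and then to check that the relevant ideals match up under the isomorphism $\overline{h}$ of Theorem~\ref{thm: isom}. More precisely, $\mathcal{J}$ is by definition the operadic ideal of $\TB$ generated by the class of the corolla $C(\bullet;1,2)$, which under $\overline{h}^{-1}$ corresponds to the generator $[-,-]$ of $\ca{P}ost\ca{L}ie$. Hence $\overline{h}$ descends to an isomorphism $\TB/\mathcal{J}\xrightarrow{\sim} \ca{P}ost\ca{L}ie/(\,[-,-]\,)$, where $([-,-])$ denotes the operadic ideal generated by the Lie bracket. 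The first step is therefore to identify $\ca{P}ost\ca{L}ie/(\,[-,-]\,)$ with $\ca{P}re\ca{L}ie$: this is exactly the statement that killing the bracket in the quadratic presentation $\ca{P}ost\ca{L}ie=\ca{F}(E)/(R)$ collapses the relations $R_{Lie}$ and $R_r$ and turns $R_l$ into the pre-Lie relation $\mathrm{a}_{\triangleright}=\mathrm{a}_{\triangleright}\cdot(213)$, so that $p\co\ca{P}ost\ca{L}ie\to\ca{P}re\ca{L}ie$ is precisely the quotient by $(\,[-,-]\,)$.

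Next I would construct the canonical projection $\TB\twoheadrightarrow\ca{RT}$ directly on the level of trees. A planar rooted tree, once we forget its planar embedding, becomes a non-planar rooted tree; this sends the basis of $\mathcal{L}(n)$ (fully labeled planar trees) onto the basis of $\ca{RT}(n)$ (labeled non-planar trees), and sends every tree of $\mathcal{W}(n)$ — which has an unlabeled root — to zero after further quotienting by $\mathcal{J}$, since modulo $\mathcal{J}$ every unlabeled vertex can be expanded via the relations in $\mathcal{I}$ into grafting operations, and the presence of the corolla $C(\bullet;-,-)$ in $\mathcal{J}$ kills precisely the ``bracket part''. Comparing the partial composition formula \eqref{eq: explicit partial compo partial planar tree} in $\Tp$ with the one in $\ca{RT}$ — which is the same sum over maps $\phi$, only with $\phi$ landing in all of $V(R)$ rather than in minimal labeled angles, and with the planar-order constraint on the fibers dropped — one sees that forgetting planarity is a morphism of operads $\mathcal{L}\to\ca{RT}$ that is surjective. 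The key combinatorial point is that once planarity is forgotten, the different orderings of the fibers of $\phi$ become identified, so the restricted sum over $Ang_{lab}^{min}(R)$ in $\TB$ maps onto the full sum over $V(R)$ in $\ca{RT}$; this is exactly the content of \cite[Theorem 1.9]{CL} relating $\ca{P}re\ca{L}ie$ to $\ca{RT}$.

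I would then assemble the square: on generators, $\overline{h}(\triangleright)$ is the planar one-vertex grafting corolla, whose image in $\ca{RT}$ under ``forget planarity'' is the non-planar grafting, which is $\Phi(\triangleright)$; and $\overline{h}([-,-])$ is the two-leaf corolla $C(\bullet;1,2)$, which maps to zero in $\ca{RT}$, matching $p([-,-])=0$. Since all four operads are generated in arity $2$ (and $\ca{P}ost\ca{L}ie$ is generated by $\{[-,-],\triangleright\}$), commutativity on generators forces commutativity of the whole diagram; this gives in particular that the composite $\ca{P}ost\ca{L}ie\xrightarrow{\overline{h}}\TB\twoheadrightarrow\ca{RT}$ kills $(\,[-,-]\,)$, hence factors through $p$ as a map $\ca{P}re\ca{L}ie\to\ca{RT}$ which on generators agrees with $\Phi$ and is therefore $\Phi$ itself. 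Running the argument the other way, $\overline{h}$ carries the ideal $\mathcal{J}$'s preimage $(\,[-,-]\,)$ isomorphically onto $\mathcal{J}$, so the induced map $\TB/\mathcal{J}\to\ca{RT}$ is an isomorphism because $\Phi$ is and because $p$ is exactly the quotient by $(\,[-,-]\,)$.

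The main obstacle I expect is the bookkeeping in the combinatorial claim that ``forget planarity'' intertwines the partial compositions — specifically, tracking how the planar-order condition on the fibers $\phi^{-1}(\alpha)$ in \eqref{eq: explicit partial compo partial planar tree}, together with the bijection $Ang^{min}_{lab}(R)\cong V_{lab}(R)$, degenerates to the unordered grafting of $\ca{RT}$, and verifying that this is compatible with the contraction operations $Con$ and the sign-weighted symmetrization $\epsilon(\sigma)$ defining $\hat\circ_i$ once one is working modulo $\mathcal{J}$ (where the $\mathcal{W}$-part is killed, so in fact only $\circ_i$ on $\mathcal{L}$ survives and the $\hat\circ_i$-subtlety evaporates). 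Making precise that $\mathcal{J}$ contains all of $\mathcal{W}(n)$ modulo $\mathcal{I}$ — equivalently that modulo $\mathcal{J}$ every generator of $\TB$ is a class of a fully labeled tree — is the one point that needs a short induction using Lemma~\ref{lem: decompo gen of LW}, decomposing a $\mathcal{W}$-tree through its unlabeled root corolla $C(\bullet;1,2)\circ$-compositions, which lies in $\mathcal{J}$ by definition.
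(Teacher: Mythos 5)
Your proposal is correct, but it reaches the isomorphism by a genuinely different route than the paper. The paper never invokes Theorem~\ref{thm: isom} for the first assertion: it argues directly inside $\TB/\mathcal{J}$, using the computation \eqref{eq: commut T} together with the $\mathbb{S}$--closure of $\mathcal{J}$ to show that the class of any corolla is invariant under permutation of its leaves, then decomposing every tree into corollas (and killing the $\ca{W}$--part via Lemma~\ref{lem: decompo gen of LW}) to see that classes of trees in $\TB/\mathcal{J}$ are fully labeled non-planar trees, whence the canonical bijection with $\ca{RT}$; compatibility with compositions and the commutativity of the square are then declared straightforward. You instead transfer the whole problem through $\overline{h}$: since $\overline{h}([-,-])$ is the generator of $\mathcal{J}$, the isomorphism of Theorem~\ref{thm: isom} carries the ideal $([-,-])$ onto $\mathcal{J}$, and the quadratic presentation shows $\ca{P}ost\ca{L}ie/([-,-])\cong\ca{P}re\ca{L}ie$ with $p$ the quotient map, so $\TB/\mathcal{J}\cong\ca{P}re\ca{L}ie\cong_{\Phi}\ca{RT}$ comes essentially for free; the tree-level content is then concentrated in checking that the forget-planarity map is a well-defined operad morphism $\TB\to\ca{RT}$ killing $\mathcal{J}$, after which generation in arity $2$ gives the commutativity of the square and identifies your abstract isomorphism with the canonical one. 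What each approach buys: yours settles cleanly, via the known isomorphisms, that the quotient collapses nothing beyond planarity (a point the paper's surjection-style argument passes over quickly), while the paper's direct argument produces the canonical tree-level identification without owing the separate verification that forgetting planarity respects $\mathcal{I}$, $\mathcal{J}$ and the compositions $\circ_i$ and $\hat{\circ}_i$ --- the bookkeeping you rightly flag as your remaining obstacle, and which is in effect where the paper's computation \eqref{eq: commut T} lives. Your use of Lemma~\ref{lem: decompo gen of LW} to show that every $\ca{W}$--tree lies in $\mathcal{J}$ coincides with the paper's own argument for full labeledness, so on that point the two proofs agree.
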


\begin{proof} 
	We only prove the first assertion, as the commutativity of the diagram is a straightforward verification.  
	For all $n$ and $1\leq i \leq n-1$, one has  
	\begin{multline}\label{eq: commut T}
	\begin{tikzpicture}
	[baseline, my circle/.style={draw, fill, circle, minimum size=3pt, inner sep=0pt}, level distance=0.5cm, 
	level 2/.style={sibling distance=0.8cm}, sibling distance=0.9cm]
	\node [my circle,label=left:\tiny{$n$}] {} [grow=up]
	{child {node [my circle,label=right:\tiny{$n-1$}]  {}} 
		child {node [label=above:\tiny{$\cdots$}]  {}}
		child {node [my circle,label=left:\tiny{$2$}]  {}}
		child {node [my circle,label=left:\tiny{$1$}]  {}}};
		\draw [-] (0,-.25) -- (0,0) ;
	\end{tikzpicture} 
	\circ_i 
	\begin{tikzpicture} 
	[baseline, my circle/.style={draw, fill, circle, minimum size=3pt, inner sep=0pt}, level distance=0.4cm, 
	level 2/.style={sibling distance=0.8cm}, sibling distance=0.6cm]
	\node [my circle] {} [grow=up]
	{child {node [my circle,label=above:\tiny{$2$}]  {}}
		child {node [my circle,label=above:\tiny{$1$}]  {}}};
	\draw [-] (0,-.25) -- (0,0) ;
	\end{tikzpicture} 
	= 
	\\
	\begin{tikzpicture}
	[baseline, my circle/.style={draw, fill, circle, minimum size=3pt, inner sep=0pt}, level distance=0.5cm, 
	level 2/.style={sibling distance=0.8cm}, sibling distance=0.6cm]
	\node [my circle,label=left:\tiny{$n+1$}] {} [grow=up]
	{child {node [my circle,label=above:\tiny{$n$}]  {}} 
		child {node [label=above:\tiny{$\cdots$}]  {}}
		child {node [my circle,label=above:\tiny{$i+1$}]  {}}
		child {node [my circle,label=above:\tiny{$i$}]  {}}
		child {node [label=above:\tiny{$\cdots$}]  {}}
		child {node [my circle,label=above:\tiny{$2$}]  {}}
		child {node [my circle,label=above:\tiny{$1$}]  {}}};
		\draw [-] (0,-.25) -- (0,0) ;
	\end{tikzpicture} - 
	\begin{tikzpicture}
	[baseline, my circle/.style={draw, fill, circle, minimum size=3pt, inner sep=0pt}, level distance=0.5cm, 
	level 2/.style={sibling distance=0.8cm}, sibling distance=0.6cm]
	\node [my circle,label=left:\tiny{$n+1$}] {} [grow=up]
	{child {node [my circle,label=above:\tiny{$n$}]  {}} 
		child {node [label=above:\tiny{$\cdots$}]  {}}
		child {node [my circle,label=above:\tiny{$i$}]  {}}
		child {node [my circle,label=above:\tiny{$i+1$}]  {}}
		child {node [label=above:\tiny{$\cdots$}]  {}}
		child {node [my circle,label=above:\tiny{$2$}]  {}}
		child {node [my circle,label=above:\tiny{$1$}]  {}}};
		\draw [-] (0,-.25) -- (0,0) ;
	\end{tikzpicture} 
	\in \mathcal{J}(n+1).
	\end{multline}
	Since $\mathcal{J}$ is an ideal, $\mathcal{J}(n+1)$ is closed under the action of the ${\mathbb S}_{n+1}$. 
	In particular  
	\begin{equation*}
	\begin{tikzpicture}
	[baseline, my circle/.style={draw, fill, circle, minimum size=3pt, inner sep=0pt}, level distance=0.55cm, 
	level 2/.style={sibling distance=0.85cm}, sibling distance=0.6cm]
	\node [my circle,label=left:\tiny{$n+1$}] {} [grow=up]
	{child {node [my circle,label=above:\tiny{$\sigma(n)$}]  {}} 
		child {node [label=above:\tiny{$\cdots$}]  {}}
		child {node [my circle,label=above:\tiny{$\sigma(i+1)$}]  {}}
		child {node [my circle,label=above:\tiny{$\sigma(i)$}]  {}}
		child {node [label=above:\tiny{$\cdots$}]  {}}
		child {node [my circle,label=above:\tiny{$\sigma(2)$}]  {}}
		child {node [my circle,label=above:\tiny{$\sigma(1)$}]  {}}};
		\draw [-] (0,-.25) -- (0,0) ;
	\end{tikzpicture} - 
	\begin{tikzpicture}
	[baseline, my circle/.style={draw, fill, circle, minimum size=3pt, inner sep=0pt}, level distance=0.55cm, 
	level 2/.style={sibling distance=0.85cm}, sibling distance=0.65cm]
	\node [my circle,label=left:\tiny{$n+1$}] {} [grow=up]
	{child {node [my circle,label=above:\tiny{$\sigma(n)$}]  {}} 
		child {node [label=above:\tiny{$\cdots$}]  {}}
		child {node [my circle,label=above:\tiny{$\sigma(i)$}]  {}}
		child {node [my circle,label=above:\tiny{$\sigma(i+1)$}]  {}}
		child {node [label=above:\tiny{$\cdots$}]  {}}
		child {node [my circle,label=above:\tiny{$\sigma(2)$}]  {}}
		child {node [my circle,label=above:\tiny{$\sigma(1)$}]  {}}};
		\draw [-] (0,-.25) -- (0,0) ;
	\end{tikzpicture} 
	\end{equation*} 
	belongs to  $\mathcal{J}(n+1)$ for all $\sigma \in {\mathbb S}_{n}$.  
	Using this observations it is not difficult to see that the class in $\TB / \mathcal{J}$  of any corolla is invariant under permutation of its leaves \ie the class of any corolla may be seen as non-planar corolla.   
	
	As every tree of $\TB$ decomposes into corollas, its class in $\TB/ \mathcal{J}$ may be seen as a non-planar tree. 
	Moreover, recall that $\TB=\ca{LW}/ \ca{I}$. Since $\ca{W}$ is generated, as an operad, by $C(\bullet;1,2)$ 
	(see Lemma \ref{lem: decompo gen of LW}), the trees of $\lt / \mathcal{J}$ are (fully) labeled. 
	From this follows the existence of a canonical bijection between $\lt / \mathcal{J}$ and  $\ca{RT}$. 
	It is straightforward to check that this bijection is compatible with the  operadic structures. 
\end{proof}

\section{The $\TB$--algebras}\label{sec:tbalgebra}
We start this section by characterizing the $\TB$--algebras. 
In particular we show that $\TB$ naturally encodes operations that are iterations of the post-Lie product, mimicking the case of the symmetric braces which are iterations of the pre-Lie product. We term these iterated operations \emph{post-symmetric braces} and the corresponding algebras \emph{post-symmetric brace algebras}.
Then we turn our attention to the universal enveloping algebra of a post-Lie algebra and we show how its structure of $D$--bialgebra arises from the corresponding post-symmetric braces. 

\subsection{Definition of $\TB$--algebras}
Recall the notation for the shuffle coproduct of $T(V)$, from Convention \ref{sec: convention operads}: $\Delta_{sh}(X)= \sum_{I\sqcup J} X_I \ot X_J \in T(V) \ot T(V)$ for any monomial $X\in T(V)$.  

\begin{prop}\label{prop:tb}
A  $\TB$--algebra is a pair $(\mathfrak g,T)$ of a Lie algebra $(\g,[-,-])$ together with a linear map $T\in\operatorname{Hom}_{\mathbb K}(T(\g) \ot \g,\mathfrak g)$ satisfying the following four properties. 
\begin{enumerate}[label=(\roman*)] 
	\item\label{eq: for1} Let $\ca{K}$ be the ideal of $T(\g)$ generated by $xy-yx-[x,y]$ for $x,y\in \g$. 
	  For all $z\in\mathfrak g $ one has 
	  \begin{equation}
	  T(\mathcal K;z)=0.\label{eq:c1}
	  \end{equation}
	\item\label{eq:conv}  $T({1};y)=y$, for all $y\in \mathfrak g$, 
	\item\label{eq:for2} For any two monomials $X$ and $Y=y_1\cdots y_m$ in $T(\mathfrak g)$ and $z\in \g$: 
	\begin{equation}
	T(X;T(Y;z))=\sum_{J_0\sqcup \cdots \sqcup J_m=\{1,...,m\}} T(X_{J_0},T(X_{J_1};y_1),\cdots,T(X_{J_m};y_m);z),\label{eq:c2}
	\end{equation}
	where the $J_i$'s form an \emph{ordered partition} and with the convention that $X_{\emptyset}=1\in T(\g)$. 
		\item\label{eq:for3} For all monomial $X\in T(\g)$ and all $x,y\in \g$, one has 
		\begin{equation}
		T(X;[x,y])=\sum_{I\coprod J}\big[T(X_{I};x),T(X_{J};y)\big]\label{eq:c3}
		\end{equation}
	noticing that, by (ii), the two extremal terms  read as $\big[T(X;x),y\big]$ and $[x,T(X;y)]$. 
\end{enumerate}
	\end{prop}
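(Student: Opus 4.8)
The plan is to prove the proposition by unwinding the isomorphism $\overline{h}\co \ca{P}ost\ca{L}ie \xrightarrow{\sim} \TB$ of Theorem \ref{thm: isom} at the level of algebras, and then reading off what the tree operad structure forces on the operation encoded by grafting. Concretely, an $\TB$--algebra is the same as a $\TB(\K)$--algebra structure, i.e.\ a vector space $\g$ with a morphism $\TB \to \operatorname{End}_\g$; equivalently, by Theorem \ref{thm: isom}, a post-Lie algebra structure on $\g$. The post-Lie product extends to the whole of $\TB(\K)$ by the grafting formula (Remark \ref{rem: free postlie}), and the operation $T$ is precisely the image in $\operatorname{End}_\g$ of the family of trees obtained by iterated grafting of a root onto a ``comb'' or more precisely onto a single vertex receiving several incoming edges. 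So one first \emph{defines} $T(x_1\cdots x_n;z)$ to be the element of $\g$ obtained by grafting $x_1,\dots,x_n$ onto the single vertex $z$ in all ordered ways dictated by $\TB$; equivalently, $T(X;z)$ is computed by the operad element corresponding to the corolla $C(\bullet; -,\dots,-)$ composed with $\triangleright$, acting on $\g$.

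The verification of the four properties then proceeds as follows. Property \ref{eq:conv} ($T(1;z)=z$) is immediate from the definition, since grafting nothing leaves $z$ unchanged. Property \ref{eq: for1} ($T(\ca K;z)=0$) reflects the defining relation of $\ca{LW}/\ca I$: the generator $C(\bullet;1,2)-\triangleright\circ_1\triangleright+(\triangleright\circ_1\triangleright)^{(21)}$ of the ideal $\ca I$ (which is exactly the post-Lie relation \eqref{eq:pl2} under $h$) becomes, after grafting onto a further root, the statement that replacing an adjacent pair $x_i x_{i+1}$ by $x_{i+1}x_i + [x_i,x_{i+1}]$ inside the first slot of $T$ does not change the value; iterating, $T$ descends to $T(\g)/\ca K$. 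Here one must be a little careful: the relation $\ca I$ in $\ca{LW}$ is stated for adjacent transpositions of leaves on a corolla, so one shows first that $T(\dots x_i x_{i+1}\dots;z) = T(\dots x_{i+1}x_i\dots;z) + T(\dots [x_i,x_{i+1}]\dots;z)$ by applying the corresponding element of $\ca I$, and then notes that adjacent transpositions generate, so $T$ factors through $T(\g)/\ca K$. Property \ref{eq:for2} is the ``brace'' or ``pre-Lie-type'' composition identity; it should come from the nested associativity of the partial compositions in $\TB$ (proved in the proposition following the definition of $\ca{LW}$), spelled out on the relevant trees: grafting $Y$ onto $z$ and then grafting $X$ onto the result equals, by operad associativity together with the Leibniz-type distribution of graftings over the vertices of $Y$, the sum over ordered partitions $J_0\sqcup\cdots\sqcup J_m$ on the right-hand side — the $J_0$ part being the letters grafted directly onto the new root-vertex, and each $J_k$ the letters grafted onto $y_k$. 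Property \ref{eq:for3} encodes the left post-Lie relation \eqref{eq:pl1} together with its iteration; on trees, $[x,y]$ is the corolla $C(\bullet;x,y)$, and grafting the letters of $X$ onto this corolla distributes them, by the derivation property of left-grafting (which is precisely relation $R_r$, i.e.\ the first generator of $\ca I$), as an ordered sum over $I\sqcup J$ of $[T(X_I;x),T(X_J;y)]$.

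The main obstacle I expect is \textbf{Property \ref{eq:for2}}, the composition identity: getting the combinatorics of the ordered partitions to match exactly what the operad composition produces. The delicate points are (a) bookkeeping the labels/orders so that ``ordered partition'' (as opposed to unordered, or to partition of a linearly ordered set into intervals) is genuinely what the clockwise-order grafting rule of $\Tp$ yields — one needs to track how $Ang^{min}$ and the fiberwise order in the definition of $\circ_i^\phi$ interact when $Y$ itself is a product (hence a corolla or an iterated corolla) and when $X$ gets distributed over it; and (b) handling the empty-fiber convention $X_\emptyset = 1$ consistently with $T(1;-)=\operatorname{id}$. A clean way to organize this is to first establish the identity when $Y$ is a single corolla $y_1\cdots y_m$ (so $T(Y;z)$ is one grafting), using directly the partial-composition formula \eqref{eq: explicit partial compo partial planar tree} and the description of $\widetilde\circ$, and then bootstrap to general $Y$ by induction on the number of vertices using nested associativity; the inductive step is exactly the associativity coherence already proved for $\ca{LW}$.

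As a final remark, strictly speaking the cleanest logical route is: define a functor from post-Lie algebras to the category of tuples $(\g,[-,-],T)$ satisfying \ref{eq: for1}--\ref{eq:for3} by the grafting recipe above, check it is well-defined (this is the content of verifying the four properties), check it is an equivalence with inverse ``forget $T$ and keep $[-,-]$ and $\triangleright(x;y):=T(x;y)$'' — the latter landing in post-Lie algebras because \ref{eq:for2} with $|X|=|Y|=1$ gives the pre-Lie--type associator relation \eqref{eq:pl2} modulo $\ca K$ and \ref{eq:for3} with $|X|=1$ gives \eqref{eq:pl1} — and finally invoke Theorem \ref{thm: isom} to identify the source category with $\TB$--algebras; but since the proposition only asserts the \emph{characterization}, it suffices to exhibit the $T$ attached to a $\TB$--algebra and verify \ref{eq: for1}--\ref{eq:for3}, plus conversely reconstruct the $\TB$--action from such a $T$, which is routine once \ref{eq:for2} is in hand.
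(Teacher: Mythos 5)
Your proposal follows essentially the same route as the paper: the operation $T_n$ is the image of the labeled corolla $C_{n+1}$, properties (i)--(iv) are read off from operadic compositions among corollas, and the fact that these relations suffice comes from the decomposition of every tree of $\ca{W}$ into corollas (and \eqref{eq: decompo in L} for $\ca{L}$). Your sketch of property (iii) is exactly what the paper carries out: the composition $C_{n+1}\circ_{n+1}C_{m+1}$ is, by the grafting rule, a sum over maps from the inputs of the top vertex to the labeled minimal angles of $C_{m+1}$, i.e.\ over ordered partitions $J_1\sqcup\cdots\sqcup J_{m+1}$, and each resulting ``flower'' tree is rewritten as an iterated composition of corollas as in \eqref{eq: flower tree 2}; so the bookkeeping you worry about is handled by one explicit computation, and the extra induction you propose over ``general $Y$'' is unnecessary, since in the statement $Y$ is a monomial and $T(Y;z)$ is already a single corolla operation.

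The one substantive inaccuracy is where you locate the source of properties (i) and (iv). Neither the element you write as $C(\bullet;1,2)-\triangleright\circ_1\triangleright+(\triangleright\circ_1\triangleright)^{(21)}$ nor the relation $R_r$ of \eqref{eq:pl1} is a generator of the ideal $\ca{I}$: the generators of $\ca{I}$ only impose antisymmetry and Jacobi on the unlabeled-root corolla, i.e.\ they make $[-,-]$ a Lie bracket. Property (i) holds already at the level of $\ca{LW}$, as a consequence of the signed, contracted composition $\hat{\circ}$ of \eqref{eq: partial right mod}: composing a labeled corolla with $C(\bullet;1,2)$ yields the difference of the two adjacent-transposed labeled corollas, which is the paper's identity \eqref{eq: commut T}; your reduction ``adjacent swaps generate $\ca{K}$'' is then correct. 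Likewise, the distribution in (iv) is not relation $R_r$ but simply the definition of the composition $C_{n+1}\circ_{n+1}C(\bullet;1,2)$ in $\Tp$, where the sum over $I\sqcup J$ appears because grafting is only allowed onto the labeled vertices of $C(\bullet;1,2)$. The post-Lie relations \eqref{eq:pl1}--\eqref{eq:pl2} reappear only afterwards, when one passes from $T$ to the family $\{T_n\}$ as in Proposition \ref{prop:equivt}. With these attributions corrected, your argument is the paper's proof.
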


\begin{proof}
	Since $\TB$ is the quotient of $\lw$ by the Lie ideal $\ca{I}$ (see Definition \ref{def: operad TBr}), it is clear that the corolla $C(\bullet; 1,2)$ endows $\g$ with a Lie bracket.  
	
	 Let us shorten the notation $C(n+1;1,...,n)$  to $C_{n+1}$. For $n\geq 0$, denote by $T_n\co \g^{\ot n} \ot \g \to \g$ the operation corresponding to $C_{n+1}$. They induce an operation $T\co  T(\g) \ot \g \to \g$. 
	\\
	
(i) is a direct consequence of \eqref{eq: commut T}.  
	
(ii) means that $T_0\co \g \to \g$ is the identity map; again this is immediate from its definition. 

Let us show (iii). 
  For $n,m\geq 0$, the element $C_{n+1}\circ_{n+1} C_{m+1} \in \ca{L}(n+m+1)$ is, by definition of the operadic composition, given by  
  \begin{equation}\label{eq: flower tree }
  \sum_{J_1\sqcup \cdots \sqcup J_{m+1}=\{1<...<n\} }
  	\begin{tikzpicture} 
  	[my circle/.style={draw, fill,thick, circle, minimum size=3pt, inner sep=0pt}, level distance=0.5cm, 
  	level 2/.style={sibling distance=0.8cm}, sibling distance=0.6cm,
  	emph/.style={edge from parent/.style={black,thick,draw}},
  	gre/.style={edge from parent/.style={gray,thick,draw}}, baseline=1ex]
  	]
  	\node [my circle,label=left:\tiny{$n+m+1$}] {} [grow=up]
  	{child {node [gray,thick,my circle,label=right:\tiny{$n+m$}] (last) {}
  			child[gray,thick] {node [my circle,label=above:\tiny{}] (e) {}}
  			child[gray,thick] {node [my circle,label=above:\tiny{}] (d) {}}
  		} 
  		child {node []  {}}
  		child {node []  {}
  		}
  		child {node [gray,thick,my circle,label=right:\tiny{$n+1$}] (pen)  {}
  			child {[thick,gray,thick] node [my circle,label=above:\tiny{}] (c) {}}
  			child {[gray,thick] node [my circle,label=above:\tiny{}] (b) {}}
  		}
  		child {[gray,thick] node [my circle]  (k) {}}
  		child {[gray,thick] node []  {}}
  		child {[gray,thick] node [my circle]  (a) {}}
  	};
  	\draw [-] (0,-.25) -- (0,0) ;
  	\draw ($(last)!.3!(pen)$)   node []   {$\cdots$}; 
  	\draw ($(a)!.5!(k)$)   node []   {$\cdots$}; 
  	\draw ($(b)!.5!(c)$)   node []   {$\cdots$}; 
  	\draw ($(d)!.5!(e)$)   node []   {$\cdots$}; 
  	\draw[gray,thick,decorate,decoration={brace,amplitude=10pt}] ($(a)+(0,.1)$.north ) -- ( $(k)+(0,.1)$.north);
  	\draw[gray,thick,decorate,decoration={brace,amplitude=10pt}] ($(b)+(0,.1)$.north ) -- ( $(c)+(0,.1)$.north);
  	\draw[gray,thick,decorate,decoration={brace,amplitude=10pt}] ($(d)+(0,.1)$.north ) -- ( $(e)+(0,.1)$.north);
  	\draw[gray,thick] ($(k)!.5!(a)$)  node [above,yshift=11pt]  {$J_{m+1}$}; 
  	\draw[gray,thick] ($(c)!.5!(b)$)  node [above,yshift=11pt]  {$J_{1}$}; 
  	\draw[gray,thick] ($(e)!.5!(d)$)  node [above,yshift=11pt]  {$J_{m}$}; 
  	\end{tikzpicture}
  	.
  \end{equation} 
  The sum runs over all partitions of $\{1<...<n\}$ by possibly empty order sets $J_i$; they correspond to the fibers of the maps of sets $\phi\co \{1<...<n\}\to Ang_{min}(C_{m+1})$ in \eqref{eq: partial compo partial planar tree}.  
  The leaves of the (gray thick) corolla with root $n+i$ are labeled by $J_i$. 
  
   Denote by $k_i=|J_i|$ for $1\leq i\leq m$ and $k=|J_{m+1}|$. 
  Note that every tree of the sum  \eqref{eq: flower tree } can be written as  
  \begin{equation}\label{eq: flower tree 2}
 \left( \cdots (C_{k+m+1} \circ_{k+1} C_{k_1+1}  )\circ_{k+2} \cdots \circ_{k+m} C_{k_m+1} \right) \cdot \sigma_J, 
  \end{equation}
  where $\sigma_J\in {\mathbb S}_{m+n+1}$ is given by 
  	\begin{equation*}
  	 \scalebox{.9}{ 
  \begin{tikzpicture}[>=stealth,thick,draw=black!50, arrow/.style={->,shorten >=1pt}, point/.style={coordinate}, pointille/.style={draw=red, top color=white, bottom color=red},scale=.8,baseline={([yshift=-.5ex]current bounding box.center)}]
 \matrix [{matrix of math nodes}, column sep=5pt, row sep=1pt,
 left delimiter=(,right delimiter=),ampersand replacement=\&] (m)
 {
 	\underbrace{1 \cdots k} \& \underbrace{k+1 \cdots k_1+k} \& k_1+k+1 	\& \cdots \& \underbrace{\kappa \cdots \kappa+ k_m} \& \kappa+ k_m+1 \& n+m+1 \\
    J_{m+1} \& J_1 				\&  n+1 	\& \cdots  			  \&   J_m \& n+m		\&  n+m+1  \\
 }; 
  \end{tikzpicture}.
}
    	\end{equation*}
    	Here $\kappa= k+k_1+2+k_2+2+...+k_{m-1}+2$. 
    Since $T_n\co \mathfrak{g}^{\ot n+1} \to \mathfrak{g}$ is the operation corresponding to $C_{n+1}$, we see that  \eqref{eq: flower tree 2} provides the result.  
    
   The proof of (iv) is similar to that of (iii), by computing $C_{n+1}\circ_{n+1} C(\bullet;1,2) \in \lw(n+2)$. 
   To end the proof, recall that every tree of $\ca{W}$ decomposes into corollas, and every tree of $\ca{L}$ decomposes as in \eqref{eq: decompo in L}. This essentially says that the relations (i)-(iv) generates any other. 
\end{proof}

\begin{rem}\label{rem:tbr} A few comments are in order.
\begin{enumerate}
\item As remarked in the proof of the theorem above, the morphism $T$ defines, by restriction to the $\mathfrak g^{\otimes n}$'s, a family of linear maps $\{T_{n}\}_{n\geq 1}$, where $T_{n}\in\operatorname{Hom}_{\mathbb K}(\mathfrak g^{\otimes n+1},\mathfrak g)$. Obviously the properties of $T$ expressed by the Formulas \eqref{eq:c1}, \eqref{eq:c2} and \eqref{eq:c3} correspond to conditions on the $T_n$'s. 
For example the compatibility between $T$ and the Lie bracket expressed in \eqref{eq:c1}, when read on the level of the $T_n$'s becomes:
\begin{eqnarray*}
T_{n}(x_1, \ldots, [x_i,x_{i+1}],\ldots; x_{n+2}) &=&  
			T_{n+1}(x_1, \ldots, x_i,x_{i+1},\ldots; x_{n+2})\\
			&-& T_{n+1}(x_1, \ldots,x_{i+1}, x_i,\ldots; 
			x_{n+2}).
\end{eqnarray*}
In Proposition \ref{prop:equivt} will be analyzed in more details how the conditions on the linear map $T$ can be translated to the level of the family of the $\{T_n\}_{n\geq 1}$.

\item 
Applied on $X=x\in\mathfrak g$ and $Y=y_1y_2\in\mathfrak g^{\otimes 2}$, formula \eqref{eq:c2} gives 
\begin{equation*}
T(x;T(y_1,y_2;z))=T(x,y_1,y_2;z)+T(T(x;y_1),y_2;z) + T(y_1,T(x;y_2);z), 
\end{equation*}
for all  $z\in\mathfrak g$.

For $X=xy\in \g^{\otimes 2}$ and $u,v\in \g$, equation \eqref{eq:c3} gives 
\begin{equation*}
T(x,y;[u,v])=[T(x,y;u),v]+[T(x;u),T(y;v)]+[T(y;u),T(x;v)]+[u,T(x,y;v)].
\end{equation*}
\end{enumerate}
\end{rem}

Using the family of linear maps $\{T_{n}\}_{n\geq 1}$, the isomorphism $\TB\cong \ca{P}ost\ca{L}ie$ of Theorem \ref{thm: isom} says that the $\TB$--algebras can be characterized as follows.

\begin{prop}\label{prop:equivt}
	A structure of a $\TB$--algebra on a vector space $V$ is the data of:
	\begin{itemize}
		\item a post-Lie algebra $(V,\triangleright,[-,-])$; and, 
		\item  a family $\{T_{n}\}_{n\geq 1}$ of linear maps $T_{n}\in\operatorname{Hom}_{\mathbb K}(V^{\otimes n+1},V)$ such that for each $n\geq 2$
		\begin{eqnarray}\label{eq:req}
		T_{n}(x_1,\dots,x_n;y)&=&T_{1}(x_1;T_{n-1}(x_2,\dots,x_n;y))\\
		&-&\sum_{k=2}^nT_{n-1}(x_2,\dots,T_{1}(x_1;x_k),\dots,x_n;y)\nonumber,
		\end{eqnarray}
		and $T_{1}=\triangleright$. 
	\end{itemize} 
\end{prop}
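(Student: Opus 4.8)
The plan is to combine Proposition \ref{prop:tb} with the operad isomorphism of Theorem \ref{thm: isom}. Proposition \ref{prop:tb} already rewrites a $\TB$-algebra as a Lie algebra together with a map $T$ satisfying (i)--(iv), so what remains is to translate (i)--(iv) into the post-Lie axioms together with the recursion \eqref{eq:req}, and to run the argument backwards. I would keep the notation $C_{n+1}=C(n+1;1,\dots,n)$ from the proof of Proposition \ref{prop:tb}, so that $T_n$ is the operation attached to $C_{n+1}$; recall that under $\TB\cong\ca{P}ost\ca{L}ie$ the corolla $C(\bullet;1,2)$ corresponds to the Lie bracket and $C_2$ to the post-Lie product, so $T_1=\triangleright$.

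For the direction from a $\TB$-algebra to the data: starting from the Lie bracket $[-,-]$ and the map $T$ of Proposition \ref{prop:tb}, I would set $T_n:=T|_{V^{\ot n}\ot V}$ and $\triangleright:=T_1$, and then specialize (i), \eqref{eq:c2} and \eqref{eq:c3}. Taking $X=x_1\in V$ and $Y=x_2\cdots x_n$ in \eqref{eq:c2}: since the ordered partitions of the one-element index set of $X$ have exactly one non-empty block, using $T(1;y)=y$ the right-hand side is $T_n(x_1,\dots,x_n;y)+\sum_{k=2}^{n}T_{n-1}(x_2,\dots,T_1(x_1;x_k),\dots,x_n;y)$ while the left-hand side is $T_1(x_1;T_{n-1}(x_2,\dots,x_n;y))$; rearranging is \eqref{eq:req}. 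At $n=2$ this reads $T_2(x,y;z)={\mathrm a}_{\triangleright}(x,y,z)$, and plugging the instance $T(xy-yx-[x,y];z)=0$ of (i) into it gives $[x,y]\triangleright z=T_2(x,y;z)-T_2(y,x;z)={\mathrm a}_{\triangleright}(x,y,z)-{\mathrm a}_{\triangleright}(y,x,z)$, i.e.\ \eqref{eq:pl2}; taking $X=z\in V$ in \eqref{eq:c3} gives $T_1(z;[x,y])=[T_1(z;x),y]+[x,T_1(z;y)]$, i.e.\ \eqref{eq:pl1}. Hence $(V,\triangleright,[-,-])$ is post-Lie and $\{T_n\}_{n\geq 1}$ satisfies \eqref{eq:req} with $T_1=\triangleright$.

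For the converse I would avoid re-checking (i)--(iv) by hand. Given a post-Lie algebra $(V,\triangleright,[-,-])$ and a family $\{T_n\}_{n\geq 1}$ with \eqref{eq:req} and $T_1=\triangleright$, Theorem \ref{thm: isom} already puts a $\TB$-algebra structure on $V$ (a post-Lie algebra is a $\ca{P}ost\ca{L}ie$-algebra is a $\TB$-algebra); let $T'_n$ be the operation attached to $C_{n+1}$ for that structure, so $T'_1=\triangleright$. Pushing the corolla decomposition $C_{n+1}=C_2\circ_2 C_n-\sum_{i=1}^{n-1}(C_n\circ_i C_2)^{\sigma_i}$ from the proof of Lemma \ref{lem: decompo gen of LW} through the structure map $\TB\to\operatorname{End}_V$ shows that $\{T'_n\}$ also satisfies \eqref{eq:req}. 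Since \eqref{eq:req} reconstructs the whole family recursively from $T_1$, one gets $T'_n=T_n$ for every $n$, so the given data is exactly the $\TB$-algebra structure of Theorem \ref{thm: isom}; this makes the two assignments mutually inverse and yields the equivalence.

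The step I expect to be the real obstacle is the implication ``$\triangleright$ post-Lie together with \eqref{eq:req} forces conditions (i)--(iv) of Proposition \ref{prop:tb}''. I would not prove it by a direct induction on $n$; instead I would obtain it for free by transporting structure across Theorem \ref{thm: isom}, which works because $\TB$ is generated in arity two --- a $\TB$-algebra is therefore pinned down by its two binary operations $[-,-]$ and $\triangleright$ subject to the Lie and post-Lie relations --- combined with the uniqueness of the corolla decomposition of Lemma \ref{lem: decompo gen of LW}, which is what forces the higher $T_n$ to be the iterates \eqref{eq:req} of $\triangleright$.
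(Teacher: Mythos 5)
Your forward direction coincides with the paper's: specializing \eqref{eq:c2} to $X=x_1$, $Y=x_2\cdots x_n$ gives \eqref{eq:req}, and \eqref{eq:c1}, \eqref{eq:c3} yield \eqref{eq:pl2} and \eqref{eq:pl1}. Where you genuinely diverge is the converse. The paper assembles the given family into a map $T\co T(\g)\ot\g\to\g$ and verifies conditions (i)--(iv) of Proposition \ref{prop:tb} directly, by induction on the lengths of the monomials, using \eqref{eq:req}, \eqref{eq:2} and \eqref{eq:1}. You instead let Theorem \ref{thm: isom} produce the $\TB$--structure from the post-Lie data alone and then identify its corolla operations with the given $T_n$, by pushing the decomposition $C_{n+1}=C_2\circ_2 C_n-\sum_{i}(C_n\circ_i C_2)^{\sigma_i}$ of Lemma \ref{lem: decompo gen of LW} through the structure map and using that \eqref{eq:req} determines the family from $T_1=\triangleright$; generation of $\TB$ in arity two then makes the two assignments mutually inverse. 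This is correct, and it is essentially the uniqueness argument the paper itself deploys only later, in Theorem \ref{thm:tbrD} (``two $\TB$--algebra structures with the same post-Lie product are equal''); you even make the bijectivity explicit, which the paper leaves implicit. What you give up is the self-contained derivation of the brace identities (i)--(iv) from \eqref{eq:req} and the post-Lie axioms, which the paper's induction provides and which is used concretely when computing in $\mathcal{U}(\g)$; if you write your version up, the only step needing care is unwinding the permutations $\sigma_i$ to see that the twisted terms evaluate to $\sum_{k=2}^{n}T'_{n-1}(x_2,\dots,T'_1(x_1;x_k),\dots,x_n;y)$.
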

\begin{proof}
Let $(\mathfrak g,T)$ be a $\TB$--algebra and let $\{T_{n}\}_{n\geq 1}$ be the corresponding family of linear operators, see Remark \ref{rem:tbr}. Let $X=x\in \mathfrak g$ and $Y=x_1\cdots x_{n-1}\in \mathfrak g^{\otimes n-1}$. 
For all $y\in \g$, one has 
\[
T(X;T(Y;y))=T_{1}(x;T_{n-1}(x_1,\dots,x_{n-1};y)). 
\]  
Since 
\begin{equation}
\Delta_{sh}^{n-1} x=\sum_{i=1}^nx_{(i)},\label{eq:deltax}
\end{equation}
 where $x_{(i)}=1\otimes\cdots\otimes x\otimes\cdots\otimes 1\in \mathfrak g^{\otimes n}$, with $x$ in the position $i$, the term  $T(X;T(Y;y))$ can be written as
\[
T_{n}(x,x_1,\dots,x_{n-1};y)
+\sum_{k=1}^{n-1}T_{n-1}(x_1,\dots,T_{1}(x;x_{k}),\dots,x_{n-1};y), 
\]
see item $(2)$ in Remark \ref{rem:tbr}, which, up to renumbering, is Formula \eqref{eq:req}.
Formulas \eqref{eq:c1} and \eqref{eq:c3} imply that, for all $x,y$ and $z$ in $\mathfrak g$ 
\begin{equation}
T_{1}([x,y];z)=T_{2}(x,y;z)-T_{2}(y,x;z),\label{eq:2}
\end{equation}
and, respectively,
\begin{equation}
T_{1}(x;[y,z])=[T_{1}(x;y),z]+[y,T_{1}(x;z)]\label{eq:1}
\end{equation}
and since $\triangleright=T_{1}$, one concludes that $(\mathfrak g,\triangleright)$ is a post-Lie algebra. On the other hand, suppose that on a post-Lie algebra $(\mathfrak g,\triangleright)$ is defined a family of linear operators $\{T_{n}\}_{n\geq 1}$, where $T_{n}\in\operatorname{Hom}_{\mathbb K}(\mathfrak g^{\otimes n+1},\mathfrak g)$ with $T_{1}=\triangleright$, such that Formula \eqref{eq:req} holds true. Let $T\in\operatorname{Hom}_{\mathbb K}(T(\mathfrak g),\mathfrak g)$ be the unique linear map such that $T(1;y)=y$ for all $y\in\mathfrak g$ and such that, for each $n\geq 1$, its restriction to $\mathfrak g^{\otimes n}$ is equal to $T_{n}$. Equations \eqref{eq:c1}, \eqref{eq:c2} and \eqref{eq:c3} follow from the Formulas \eqref{eq:2} \eqref{eq:req} and, respectively, from \eqref{eq:1} using an inductive argument on the length of the monomial $X$ and $Y$. 
For example, since every $x\in\mathfrak g$ is primitive, for $X=x\in\mathfrak g$, \eqref{eq:c3} is Formula \eqref{eq:1}. Suppose now that \eqref{eq:c3} held for every $X$ in $\mathfrak g^{\otimes k}$, for all $k\leq n-1$ and let $X'=xX$ with $X\in\mathfrak g^{\otimes n-1}$. Then $T(X';[y,z])=T(xX;[y,z])=T_{n}(x,x_1,\dots,x_{n-1};[y,z])$, which, using Formula \eqref{eq:req}, becomes
\[
T_{1}(x;T_{n-1}(x_1,\dots,x_{n-1};[y,z]))-\sum_{k=1}^{n-1}T_{n-1}(x_1,\dots,T_{1}(x;x_k),\dots,x_{n-1};[y,z]).
\]
To conclude the proof it suffices to apply the inductive hypothesis to the terms in the formula above and compare the result obtained with what one gets from the following computation
\[
[T((xX)_{(1)};y),z]+[y,T((xX)_{(2)};z)]
\]
recalling that for all $x\in\mathfrak g$: 
\[
\Delta_{sh}(xX)=(xX)_{(1)}\otimes(xX)_{(2)}=xX_{(1)}\otimes X_{(2)}+X_{(1)}\otimes xX_{(2)}.
\]
One gets Formula \eqref{eq:c1} from Formulas \eqref{eq:2} and \eqref{eq:req} and Formula \eqref{eq:c2} from \eqref{eq:req} using a similar strategy. 
\end{proof}

\subsection{$\TB$-algebras vs $D$--algebras}\label{ssec:tbvsda}

We now discuss briefly some relations between the $\TB$-algebras and the universal enveloping algebras of the corresponding post-Lie algebras. These algebras, which were introduced in \cite{MW} and there termed \emph{$D$--algebras}, were further studied in \cite{MKL}. In what follows we adopt the definition of $D$--algebra proposed in the recent preprint \cite{CEFMMK}.

\begin{defn}[\cite{CEFMMK}]\label{defn:Dalg}
A \emph{$D$--algebra} is a unital, associative algebra $(D,\cdot,1)$ equipped with a non-associative product 
$\triangleright:V\otimes V\rightarrow V$, an exhaustive increasing filtration
\[
\mathbb K\cdot 1=D^0\subset D^1\subset D^2\subset\cdots\subset D^n\subset\cdots
\]
and an augmentation $\epsilon:D\rightarrow\mathbb K$ such that $D^i\cdot D^j\subset D^{i+j}$ and 
\begin{enumerate}
\item[(i)] $1\triangleright X=X$ and $X\triangleright 1=0$, for all $X\in D$.
\item[(ii)] $D_1=\operatorname{ker}(\epsilon)\cap D^1$ is closed with respect to the antisymmetrization of the associative product and with respect to the bilinear product $\triangleright$ and it generates $(D,\cdot)$.
\item[(iii)] $x\triangleright (X\cdot Y)=(x\triangleright X)\cdot Y+X\cdot(x\triangleright Y)$ for all $x\in D_1$ and $X,Y\in D$.
\item[(iv)] $(x\cdot X)\triangleright Y=x\triangleright (X\triangleright Y)-(x\triangleright X)\triangleright Y$, for all $x\in D_1$ and $X,Y\in D$.
\end{enumerate}
\end{defn}

\begin{prop}[\cite{MKL}]
$D_1$ is a post-Lie algebra. 
\end{prop}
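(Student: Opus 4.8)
The plan is to unwind the definition of a $D$--algebra and check the two post-Lie axioms \eqref{eq:pl1} and \eqref{eq:pl2} directly on $D_1$. First I would observe that, by axiom (ii) of Definition \ref{defn:Dalg}, $D_1=\ker(\epsilon)\cap D^1$ is closed under the product $\triangleright$ (so $\triangleright$ restricts to a bilinear map $D_1\otimes D_1\to D_1$) and under the commutator $[x,y]:=x\cdot y-y\cdot x$ of the associative product (so $(D_1,[-,-])$ is a Lie algebra, the bracket being antisymmetric and satisfying the Jacobi identity since it is the commutator of an associative product, restricted to a subspace closed under it). It thus remains to verify the two compatibility identities relating $\triangleright$ and $[-,-]$.

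For the left post-Lie relation \eqref{eq:pl2}, I would take $x,y,z\in D_1$ and compute $[x,y]\triangleright z=(x\cdot y-y\cdot x)\triangleright z$. Applying axiom (iv) to $x\cdot y$ (legitimate since $x\in D_1$ and $y\in D\supset D_1$) gives $(x\cdot y)\triangleright z=x\triangleright(y\triangleright z)-(x\triangleright y)\triangleright z={\mathrm a}_{\triangleright}(x,y,z)$, and similarly $(y\cdot x)\triangleright z={\mathrm a}_{\triangleright}(y,x,z)$; subtracting yields exactly \eqref{eq:pl2}. For the right post-Lie relation \eqref{eq:pl1}, I would take $z\in D_1$ and $x,y\in D_1$ and compute $z\triangleright[x,y]=z\triangleright(x\cdot y)-z\triangleright(y\cdot x)$. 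Axiom (iii) (Leibniz rule, with $z\in D_1$) gives $z\triangleright(x\cdot y)=(z\triangleright x)\cdot y+x\cdot(z\triangleright y)$ and $z\triangleright(y\cdot x)=(z\triangleright y)\cdot x+y\cdot(z\triangleright x)$. Subtracting and regrouping, $(z\triangleright x)\cdot y-y\cdot(z\triangleright x)=[z\triangleright x,y]$ and $x\cdot(z\triangleright y)-(z\triangleright y)\cdot x=[x,z\triangleright y]$, which is precisely \eqref{eq:pl1}; note this uses that $z\triangleright x$ and $z\triangleright y$ lie in $D_1$, again by axiom (ii), so the commutators make sense in $D_1$.

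There is essentially no hard obstacle here: the statement is a direct translation, the only points requiring care being (a) that all the elements produced by $\triangleright$ and the commutator stay inside $D_1$, which is exactly what axiom (ii) guarantees, and (b) that axioms (iii) and (iv) are stated for the first slot in $D_1$ and the remaining slots in all of $D$, which is more than enough for our purposes since $D_1\subset D$. One may also remark that this is the statement of \cite[Proposition 3.1]{MKL} (or its analogue for the present definition of $D$--algebra from \cite{CEFMMK}), so the proof is recalled here only for completeness.
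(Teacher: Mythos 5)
Your proof is correct and follows exactly the paper's argument: closure of $D_1$ under $\triangleright$ and the commutator comes from axiom (ii), axiom (iii) yields \eqref{eq:pl1}, and axiom (iv) yields \eqref{eq:pl2}; you merely spell out the computations that the paper leaves implicit.
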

\begin{proof}
By antisymmetrizing the associative product $\cdot$ one gets a Lie bracket. Axiom $(ii)$ says that $D_1$ is closed with respect to both this bracket and to the product $\triangleright$. Now axiom $(iii)$ implies \eqref{eq:pl1} and axiom $(iv)$ implies \eqref{eq:pl2}, proving the statement.
\end{proof}

To investigate further the relations between $D$--algebras and $\TB$--algebras, it is convenient to enhance the structure defining a $D$--algebra as follows. 

\begin{defn}\label{def:dbialgebra}
A $D$-bialgebra is a bialgebra $(D,\cdot,1,\Delta,\epsilon)$ endowed with a non-associative product $\triangleright:D\otimes D\rightarrow D$ and an exhaustive, increasing filtration 
\[
\mathbb K\cdot 1=D^0\subset D^1\subset D^2\subset\cdots\subset D^n\subset\cdots
\]
such that $D^i\cdot D^j\subset D^{i+j}$ and
\begin{enumD}
	\item\label{D bial item1} $1\triangleright X=X$ and $X\triangleright 1=0$, for all $X\in D$,
	\item\label{D bial item2} $D_1=\operatorname{ker}(\epsilon)\cap D^1=\operatorname{Prim}(D)$ which generates $(D,\cdot)$,
	\item\label{D bial item3} $\Delta(X\triangleright Y)=(X_{(1)}\triangleright Y_{(1)})\otimes(X_{(2)}\triangleright Y_{(2)})$,
	\item\label{D bial item4} $X\triangleright(Y\cdot Z)=(X_{(1)}\triangleright Y)\cdot(X_{(2)}\triangleright Z)$,
	\item\label{D bial item5} $(x\cdot X)\triangleright y=x\triangleright (X\triangleright y)-(x\triangleright X)\triangleright y$,
	\item\label{D bial item6} $D_1$ is closed under the antisymmetrization of the associative product.
\end{enumD}
In item \ref{D bial item2} $\operatorname{Prim}(D)$ is the vector space of the primitive elements of the coalgebra $(D,\Delta,\epsilon)$ i.e. the set of all $x\in D$ such that $\Delta x=x\otimes 1+1\otimes x$.
\end{defn}

First we prove that any $D$--bialgebra has an underlying structure of a $D$--algebra. 
\begin{prop}
If $(D,\cdot,1,\Delta,\epsilon,\triangleright)$ is a $D$-bialgebra, then $(D,\cdot, 1,\epsilon,\triangleright)$ is a $D$--algebra.
\end{prop}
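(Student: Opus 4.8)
The plan is to verify, one at a time, that the four axioms (i)--(iv) of Definition~\ref{defn:Dalg} follow from the $D$--bialgebra axioms \ref{D bial item1}--\ref{D bial item6}; the augmentation of the $D$--algebra is taken to be the counit $\epsilon$ (which is an algebra map since $D$ is a bialgebra) and the filtration is kept unchanged. Axiom (i) is literally \ref{D bial item1}, and the requirements that the filtration be exhaustive and increasing with $D^i\cdot D^j\subset D^{i+j}$ are already part of the data of a $D$--bialgebra, so there is nothing to do for them.

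For axiom (ii), the identity $D_1=\ker(\epsilon)\cap D^1$ and the generation of $(D,\cdot)$ by $D_1$ are contained in \ref{D bial item2}, and closure of $D_1$ under the antisymmetrized associative product is \ref{D bial item6}. The one point to check is that $x\triangleright y\in D_1$ for $x,y\in D_1$. Since $D_1=\Prim(D)$ by \ref{D bial item2}, I would apply \ref{D bial item3} to $x\triangleright y$: inserting $\Delta x=x\ot 1+1\ot x$ and $\Delta y=y\ot 1+1\ot y$, the four summands of $(x_{(1)}\triangleright y_{(1)})\ot(x_{(2)}\triangleright y_{(2)})$ collapse, via \ref{D bial item1}, to $(x\triangleright y)\ot 1+1\ot(x\triangleright y)$, so $x\triangleright y$ is primitive and thus lies in $D_1$. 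For axiom (iii), substituting $\Delta x=x\ot 1+1\ot x$ into \ref{D bial item4} and simplifying with \ref{D bial item1} gives $x\triangleright(X\cdot Y)=(x\triangleright X)\cdot Y+X\cdot(x\triangleright Y)$ for $x\in D_1$; that is, $X\mapsto x\triangleright X$ is a derivation of $(D,\cdot)$, a fact that will also be used below.

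The only substantial step is axiom (iv): $(x\cdot X)\triangleright Y=x\triangleright(X\triangleright Y)-(x\triangleright X)\triangleright Y$ for all $x\in D_1$ and $X,Y\in D$, whereas \ref{D bial item5} supplies this identity only when $Y\in D_1$. Fix $x\in D_1$ and set
\[
P_X(Y):=(x\cdot X)\triangleright Y-x\triangleright(X\triangleright Y)+(x\triangleright X)\triangleright Y ,
\]
so the goal is $P_X\equiv 0$ on $D$ for every $X\in D$. One records first that $P_X(1)=0$ (from \ref{D bial item1}) and that $P_X(y)=0$ for $y\in D_1$ (this is \ref{D bial item5}). Then one expands $P_X(Y_1\cdot Y_2)$ term by term: the first term by \ref{D bial item4} using $\Delta(x\cdot X)=xX_{(1)}\ot X_{(2)}+X_{(1)}\ot xX_{(2)}$; the last term by \ref{D bial item4} together with $\Delta(x\triangleright X)=(x\triangleright X_{(1)})\ot X_{(2)}+X_{(1)}\ot(x\triangleright X_{(2)})$, which itself follows from \ref{D bial item3} and \ref{D bial item1} since $x$ is primitive; and the middle term by \ref{D bial item4} followed by the derivation property (iii). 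Collecting the resulting terms into the two slots of the product yields the recursion
\[
P_X(Y_1\cdot Y_2)=P_{X_{(1)}}(Y_1)\cdot(X_{(2)}\triangleright Y_2)+(X_{(1)}\triangleright Y_1)\cdot P_{X_{(2)}}(Y_2),
\]
the Sweedler sum being over $\Delta X$.

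Since $D_1$ generates $(D,\cdot)$ by \ref{D bial item2}, every element of $D$ is a linear combination of $1$ and of products of elements of $D_1$, so I would conclude by induction on the number of such factors: the base cases $P_X(1)=0$ and $P_X(y)=0$ for $y\in D_1$ are already recorded, and in the inductive step one writes $Y=Y_1\cdot Y_2$ with $Y_1,Y_2$ of strictly smaller length and applies the displayed recursion, using that $P_{X_{(1)}}(Y_1)=0$ and $P_{X_{(2)}}(Y_2)=0$ by the inductive hypothesis, which is quantified over all $X\in D$. This bootstrapping of \ref{D bial item5} from $D_1$ to all of $D$ is the main obstacle; everything else is routine substitution of primitive coproducts into \ref{D bial item1}, \ref{D bial item3} and \ref{D bial item4}. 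If instead \ref{D bial item5} is read as holding for arbitrary $Y\in D$, axiom (iv) is immediate and this last step is unnecessary.
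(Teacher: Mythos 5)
Your proposal is correct and follows essentially the same route as the paper: closure of $D_1$ under $\triangleright$ via \ref{D bial item3} and primitivity, the derivation property (iii) by specializing \ref{D bial item4} to primitive $x$, and axiom (iv) by induction on the length of $Y$ over products of primitives, using \ref{D bial item4}, the algebra-morphism property of $\Delta$, and $\Delta(x\triangleright X)=(x\triangleright X_{(1)})\ot X_{(2)}+X_{(1)}\ot(x\triangleright X_{(2)})$ from \ref{D bial item3}, with the inductive hypothesis quantified over all $X$. Your operator $P_X$ and the product recursion merely repackage the paper's inductive step (which splits $Y'=Y\cdot y$ with $y\in D_1$), so no substantive difference remains.
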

\begin{proof}
Axioms \ref{D bial item2} and \ref{D bial item3}  imply at once that $D_1$ is closed under the product $\triangleright$. To conclude the proof it suffices to prove that every $D$--bialgebra fulfills the axioms $(iii)$ and $(iv)$ of the $D$--algebras.  
Property $(iii)$ follows at once from \ref{D bial item1}, \ref{D bial item2} and \ref{D bial item4}. 
To prove that in every $D$--bialgebra $(iv)$ holds, let $x\in D_1$ and $X,Y\in D$. If the length of $Y$ is $1$, i.e. if $Y\in D_1$, then $(iv)$ is \ref{D bial item5}. Let $Y'=Y\cdot y$, where length of $Y$ is $n-1$ and suppose that $(iv)$ holds for each $Y$ of length at least $n-1$.
Then
\[
(x\cdot X)\triangleright Y'=(x\cdot X)\triangleright (Y\cdot y)=\big((x\cdot X)_{(1)}\triangleright Y\big)\cdot\big((x\cdot X)_{(2)}\triangleright y\big),
\]
see \ref{D bial item5}. 
Since the coproduct is an algebra morphism and $x$ is primitive, the last term of the previous equality becomes
\[
\big((x\cdot X_{(1)})\triangleright Y\big)\cdot\big(X_{(2)}\triangleright y\big)+\big(X_{(1)}\triangleright Y\big)\cdot\big((x\cdot X_{(2)})\triangleright y\big),
\]
which, by the inductive hypothesis, can be written as
\[
\big(x\triangleright (X_{(1)}\triangleright Y)-(x\triangleright X_{(1)})\triangleright Y\big)\cdot(X_{(2)}\triangleright y)+(X_{(1)}\triangleright Y)\cdot\big(x\triangleright (X_{(2)}\triangleright y)-(x\triangleright X_{(2)})\triangleright y\big).
\] 
Since $x\in D_1$, using \ref{D bial item4} the previous expression can be written as
\[
x\triangleright \big(X\triangleright (Y\cdot y)\big)-[\big((x\triangleright X_{(1)})\triangleright Y\big)\cdot(X_{(2)}\triangleright y)+(X_{(1)}\triangleright Y)\cdot\big((x\triangleright X_{(2)})\triangleright y\big)].
\]
Using \ref{D bial item3}, together with the property of the coproduct of being an algebra morphism, the terms in the square bracket can be written as 
$(x\triangleright X)\triangleright (Y\cdot y)$, giving the proof of the proposition.
\end{proof}

Let  
\begin{equation*}\label{eq:classical univ funct}
\mathcal{U}\co \cat{Lie} \to  \cat{Bialgebra} 
\end{equation*}
be the classical universal enveloping algebra functor. With the following result we show that it enriches to a functor 
\begin{equation}\label{eq:univ funct PSB}
\mathcal{U}\co \cat{PostLie} \to  \cat{D-bialgebra}
\end{equation} 
with adjoint the primitive elements functor defined in \eqref{eq: functor primitive}. 
\begin{thm}\label{thm:tbrD}
	Let $\g$ be a Lie algebra. There is a one-to-one correspondence between the structures of $\TB$--algebra on $\mathfrak g$ and the structures of $D$--bialgebra on $\mathcal U(\mathfrak g)$.
\end{thm}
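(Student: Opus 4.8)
The plan is to construct, from a $\TB$-algebra structure on $\g$, the whole $D$-bialgebra structure on $\mathcal U(\g)$ by extending the post-symmetric braces to a post-Lie product $\triangleright$ on $\mathcal U(\g)$, and conversely to recover the braces from such a structure by restriction. First I would exploit Proposition \ref{prop:equivt}: a $\TB$-algebra on $\g$ is a post-Lie algebra $(\g,\triangleright,[-,-])$ together with the family $\{T_n\}$, with $T_1=\triangleright$ and the $T_n$ for $n\ge 2$ determined by the recursion \eqref{eq:req}; and the linear map $T\co T(\g)\ot\g\to\g$ assembled from the $T_n$'s satisfies \eqref{eq:c1}, so it factors through $\mathcal U(\g)\ot\g$ by property \ref{eq: for1}. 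This gives a canonical map $T\co \mathcal U(\g)\ot\g\to\g$, which is the starting datum.

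Next I would extend $T$ to a product $\triangleright\co \mathcal U(\g)\ot\mathcal U(\g)\to\mathcal U(\g)$ by the coalgebra-compatibility rule forced by \ref{D bial item4}: setting $A\triangleright 1 = \epsilon(A)1$ and, inductively on the length of the second argument, $A\triangleright(B\cdot y):=(A_{(1)}\triangleright B)\cdot(A_{(2)}\triangleright y)$ for $y\in\g$, with $A\triangleright y:=T(A;y)$ when $y\in\g$. One must check this is well-defined on $\mathcal U(\g)$ (\ie compatible with the defining relations $xy-yx-[x,y]$ of $\mathcal U(\g)$ in the second slot), which is exactly where \eqref{eq:c3} enters — \eqref{eq:c3} says $T(A;[x,y])=[T(A_{(1)};x),T(A_{(2)};y)]$, matching what the Leibniz-type extension rule demands modulo the commutator relation. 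Then I would verify the axioms of Definition \ref{def:dbialgebra} one by one: \ref{D bial item1} is immediate from \ref{eq:conv} and the definition; the filtration $D^n=\bigoplus_{k\le n}\g^{\ot k}$ image in $\mathcal U(\g)$ is the standard PBW filtration and $D_1=\g=\Prim(\mathcal U(\g))$, giving \ref{D bial item2} and \ref{D bial item6}; \ref{D bial item4} holds by construction; \ref{D bial item3} (that $\triangleright$ is a coalgebra morphism) follows by an induction on lengths using cocommutativity of $\Delta_{sh}$ together with \eqref{eq:c3} to handle the $\g$-valued base case; and \ref{D bial item5}, $(x\cdot X)\triangleright y = x\triangleright(X\triangleright y)-(x\triangleright X)\triangleright y$ for $x\in\g$, is precisely the associator identity \eqref{eq:c2} specialized to $X=x\in\g$ (see the second item of Remark \ref{rem:tbr}), after reorganizing via \eqref{eq:deltax}. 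The uniqueness of such an extension is the content of \cite[Proposition 3.1]{KLM}, which I would cite.

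For the converse, given a $D$-bialgebra structure $(\mathcal U(\g),\cdot,1,\Delta,\epsilon,\triangleright)$, the restriction $\triangleright\co D_1\ot D_1\to D_1$ makes $D_1=\g$ a post-Lie algebra (by the preceding proposition), and I would define $T_n(x_1,\dots,x_n;y):=(x_1\cdots x_n)\triangleright y$ using the product $x_1\cdots x_n\in\mathcal U(\g)$; these land in $\g$ by \ref{D bial item3} since $y$ is primitive and $x_1\cdots x_n\triangleright y$ is then primitive. That the resulting $\{T_n\}$ satisfy the recursion \eqref{eq:req} is a direct computation from \ref{D bial item5} (the $D$-algebra associator axiom) combined with $\Delta(x_1\cdots x_n)$ and primitivity of the $x_i$, so Proposition \ref{prop:equivt} gives back a $\TB$-algebra. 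Finally I would check the two assignments are mutually inverse: going $\TB\to D$-bialgebra$\to\TB$ returns the same $\{T_n\}$ because $T_n(x_1,\dots,x_n;y)=(x_1\cdots x_n)\triangleright y = T((x_1\cdots x_n);y)$ by the construction of the extension applied to primitive elements; and the other round-trip returns the original $\triangleright$ on $\mathcal U(\g)$ by the uniqueness statement \cite[Proposition 3.1]{KLM}, since the reconstructed product agrees on $\Prim(\mathcal U(\g))\ot\mathcal U(\g)$ and satisfies \ref{D bial item4}.

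The main obstacle I anticipate is the well-definedness and verification of axiom \ref{D bial item3} (compatibility of the extended $\triangleright$ with the coproduct $\Delta$ of $\mathcal U(\g)$) together with checking the extension descends to the quotient $\mathcal U(\g)$ in the second variable: both require a careful bookkeeping induction on word length in which the Lie-bracket relation is traded against \eqref{eq:c3}, and one must confirm the two ways of peeling off a letter (left vs. right, or first vs. second tensor slot) agree — this is the place where the post-Lie identity \eqref{eq:pl1} is genuinely used and where a sign or ordering slip is most likely. Everything else is the kind of inductive unwinding already modeled in the proof of Proposition \ref{prop:equivt} and in the $D$-bialgebra $\Rightarrow$ $D$-algebra proposition above.
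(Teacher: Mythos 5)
Your proposal is correct and follows essentially the same route as the paper: descend $T$ to $\mathcal U(\g)\otimes\g$ via \eqref{eq:c1}, extend it to a product on $\mathcal U(\g)\otimes\mathcal U(\g)$ compatible with the coalgebra structure (your inductive rule forcing \ref{D bial item4} is the same extension as the paper's closed formula \eqref{eq:prodtri3}), verify the axioms of Definition \ref{def:dbialgebra} just as the paper does (with \eqref{eq:c2} yielding \ref{D bial item5} and an induction using cocommutativity yielding \ref{D bial item3}), and recover the post-symmetric braces from a $D$--bialgebra by restriction to primitives. The only differences are cosmetic: you settle the mutual inverseness by the uniqueness of the extension from \cite{KLM}, where the paper invokes Theorem \ref{thm: isom} (two $\TB$--structures with the same post-Lie product coincide), and you make explicit the well-definedness of the extended product in its second argument via \eqref{eq:c3}, a point the paper leaves implicit.
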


\begin{proof}
Suppose first that $\mathfrak g$ carries a structure of a $\TB$-algebra. 
 Formula \eqref{eq:c1} implies that $T$ descends to a linear map, still called $T$, from $\mathcal U(\mathfrak g)\otimes\mathfrak g$ to $\mathfrak g$, such that $T(1;x)=x$ for all $x\in\mathfrak g$. Define now $\triangleright:\mathcal U(\mathfrak g)\otimes\mathcal U(\mathfrak g)\rightarrow\mathcal U(\mathfrak g)$ by
\begin{equation}
X\triangleright x=T(X;x),\,\forall X\in\mathcal U(\mathfrak g),\,x\in\mathfrak g,\label{eq:protri1}
\end{equation}
\begin{equation}
X\triangleright 1=0,\,\forall X\in\mathcal U(\mathfrak g),\label{eq:protri2}
\end{equation}
and
\begin{equation}
X\triangleright Y=(X_{(1)}\triangleright y_1)\cdots (X_{(n)}\triangleright y_n)\label{eq:prodtri3}
\end{equation}
for all $Y=y_1\cdots y_n$ and any monomial $X\in\mathcal U(\mathfrak g)$.
Endowing $\mathcal U(\mathfrak g)$ with its standard filtration and its standard bialgebra structure, Properties \ref{D bial item2} and \ref{D bial item6} are automatically fulfilled. Furthermore note that if $X=x\in\mathfrak g$ and $Y=y_1\cdots y_n\in\mathcal U(\mathfrak g)$, \eqref{eq:prodtri3} implies that 
\[
x\triangleright Y=\sum_{i=1}^ny_1\cdots (x\triangleright y_i)\cdots y_n,
\]
i.e. that $\triangleright$ extends to $\mathcal U(\mathfrak g)$ as a derivation of the associative product.
This observation, together with Formula \eqref{eq:c2}, implies that $\triangleright$ satisfies \ref{D bial item5}.
 
Property \ref{D bial item4} follows from Formula \eqref{eq:prodtri3} and from the coassociativity of the coproduct. More precisely for every $n\geq 0$, let 
\[
\nabla_n:\mathcal U(\g)^{\otimes n}\otimes\frac{\mathcal U_{n}(\g)}{\mathcal U_{n-1}(\g)}\rightarrow \frac{\mathcal U_{n}(\g)}{\mathcal U_{n-1}(\g)}
\]
be the linear map defined by
\begin{equation}\label{eq:aux1}
\nabla_n(A_1\otimes\cdots\otimes A_n\otimes x_1\cdots x_n)=(A_1\triangleright x_1)\cdots (A_n\triangleright x_n)
\end{equation}

for $A_1,\dots,A_n\in\mathcal U(\g)$ and $x_1\cdots x_n\in\frac{\mathcal U_{n}(\g)}{\mathcal U_{n-1}(\g)}$.
Note that $\nabla_0$ is the multiplication map of the ground field $\mathbb K$, that $\nabla_1(A\otimes x)=A\triangleright x$, for all $A\in\mathcal U(\g)$ and $x\in\g$ and that, for all $X,Y\in\mathcal U(\g)$, with $Y=y_1\cdots y_n$ 
\begin{equation}
\nabla_n\big(\Delta^{n-1}X\otimes Y\big)=\nabla_n(X_{(1)}\otimes\cdots\otimes X_{(n)}\otimes Y)=X\triangleright Y,\label{eq:rewrit}
\end{equation}
see \eqref{eq:prodtri3}. Furthermore, let
\[
\tau_{n-k,k}:\mathcal U(\g)^{\otimes n}\otimes\frac{\mathcal U_{n}(\g)}{\mathcal U_{n-1}(\g)}\rightarrow\Big(\mathcal U(\g)^{\otimes n-k}\otimes\frac{\mathcal U_{n-k}(\g)}{\mathcal U_{n-k-1}(\g)}\Big)\otimes\Big(\mathcal U(\g)^{\otimes k}\otimes\frac{\mathcal U_{k}(\g)}{\mathcal U_{k-1}(\g)}\Big)
\]
be defined by
\begin{multline}\label{eq:aux2}
\tau_{n-k,k}(A_1\otimes\cdots\otimes A_n\otimes x_1\cdots x_n)=\\
(A_1\otimes\cdots\otimes A_{n-k}\otimes x_1\cdots x_{n-k})\otimes(A_{n-k+1}\otimes\cdots A_n\otimes x_{n-k+1}\cdots x_n)
\end{multline}
and let 
\[
m_{n-k,k}:\frac{\mathcal U_{n-k}(\g)}{\mathcal U_{n-k-1}(\g)}\otimes \frac{\mathcal U_{k}(\g)}{\mathcal U_{k-1}(\g)}\rightarrow \frac{\mathcal U_{n}(\g)}{\mathcal U_{n-1}(\g)}
\]
be the natural multiplication map. One has  
\begin{equation}\label{eq:aux3}
m_{n-k,k}\circ (\nabla_{n-k}\otimes\nabla_k)\circ\tau_{n-k,k}=\nabla_{n}.
\end{equation}
Suppose now that $X,Y,Z\in\mathcal U(\g)$ with $Y=y_1\cdots y_n$ and $Z=z_1\cdots z_k$. 
One has 
\begin{eqnarray*}
&&(X_{(1)}\triangleright Y)\cdot (X_{(2)}\triangleright Z)\stackrel{\eqref{eq:rewrit}}{=}\nabla_n\big(\Delta^{n-1}X_{(1)}\otimes Y\big)\cdot \nabla_k\big(\Delta^{k-1}X_{(2)}\otimes Z\big)\\
&=&m_{n,k}\circ (\nabla_n\otimes\nabla_k)\circ\tau_{n,k}\big(\Delta^{n-1}X_{(1)}\otimes\Delta^{k-1}X_{(2)}\otimes (Y\cdot Z)\big)\\
&\stackrel{\eqref{eq:aux3}}{=}&\nabla_{n+k}\big[\big((\Delta^{n-1}\otimes\Delta^{k-1})\circ\Delta) X\big)\otimes (Y\cdot Z)\big]\\
&=&\nabla_{n+k}(\Delta^{n+k-1}X\otimes (Y\cdot Z),
\end{eqnarray*}
where the last equality follows from the coassociativity of the coproduct. On the other hand, from  
\[
\nabla_{n+k}(\Delta^{n+k-1}X\otimes (Y\cdot Z))\stackrel{\eqref{eq:rewrit}}{=}X\triangleright (Y\cdot Z),
\]
which is what we wanted to show.  
It remains to prove item \ref{D bial item3}. 
If $Y=y\in \g$, then $X\triangleright y$ is primitive, which corresponds to \ref{D bial item3}. 
Suppose by induction that \ref{D bial item3} is true for any monomial $Y=y_1\cdots y_k$ of length $k\leq n$. 

Let $Y'=Y\cdot y$ for $Y$ of length  $ k\leq n$ and $y\in \g$. 
By \eqref{eq:prodtri3}, one has 
\begin{equation*}
\Delta(X\triangleright (Y\cdot y))=\Delta(X_{(1)}\triangleright Y)\cdot\Delta(X_{(2)}\triangleright y)=\Delta(X_{(1)}\triangleright Y)\cdot \big((X_{(2)}\triangleright y)\ot 1 + 1\ot ( X_{(2)}\triangleright y) \big).
\end{equation*}
By induction hypothesis, this gives  
\begin{multline*}
\Delta\big(X\triangleright (Y\cdot y)\big)=
\left( (X_{(1)}\triangleright Y_{(1)})\ot (X_{(2)}\triangleright Y_{(2)})\right)\cdot \big((X_{(3)}\triangleright y)\ot 1 + 1\ot (X_{(3)}\triangleright y )\big)
\\
= \big((X_{(1)}\triangleright Y_{(1)})\cdot(X_{(3)}\triangleright y)\big)\ot (X_{(2)}\triangleright Y_{(2)})
+ (X_{(1)}\triangleright Y_{(1)})\ot \big((X_{(2)}\triangleright Y_{(2)})\cdot(X_{(3)}\triangleright y)\big). 
\end{multline*}
Applying again \eqref{eq:prodtri3}, and noticing that the coproduct is cocommutative, one obtains
\begin{eqnarray*}
\Delta\big(X\triangleright (Y\cdot y)\big)&=&
\big(X_{(1)}\triangleright (Y_{(1)}\cdot y)\big)\ot (X_{(2)}\triangleright Y_{(2)})
+ (X_{(1)}\triangleright Y_{(1)})\ot \big(X_{(2)}\triangleright (Y_{(2)}\cdot y)\big)
\\
&=& (X_{(1)}\triangleright Y'_{(1)})\ot (X_{(2)}\triangleright Y'_{(2)}).
\end{eqnarray*}

Conversely, let $\mathcal U(\mathfrak g)$ be equipped with its standard filtration and  its standard bialgebra structure and supposed it be endowed with a structure of $D$-bialgebra whose $D$-product is
denoted by $\triangleright$. 
\ref{D bial item1}  implies that  $D_1=\mathfrak g$ which, by \ref{D bial item3}, has a structure of post-Lie algebra whose post-Lie product is given by the restriction of $\triangleright$ to $\g\ot \g$. It follows from Theorem \ref{thm: isom} that $\g$ is a $\TB$--algebra, with higher operations given by Proposition \ref{prop:equivt}.  

Now let us see that the these two assignments are inverse to each other. 
In fact, this follows from Theorem \ref{thm: isom} since it says that two $\TB$--algebra structures on the Lie algebra $\g$ with the same post-Lie product are equal. 
\end{proof}

\begin{prop}\label{prop: adjoint}
	The functors $\Prim$ and $\ca{U}$ are adjoints.  
\end{prop}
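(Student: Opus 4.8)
The plan is to produce, for every post-Lie algebra $\g$ and every $D$--bialgebra $D$, a bijection
\[
\operatorname{Hom}_{\cat{D-bialgebra}}(\ca{U}(\g),D)\;\cong\;\operatorname{Hom}_{\cat{PostLie}}(\g,\Prim(D)),
\]
natural in both variables, with $\ca{U}$ the left adjoint. One direction sends a morphism $\phi\co\ca{U}(\g)\to D$ of $D$--bialgebras to its restriction to primitives: being a coalgebra morphism, $\phi$ maps $\g=\Prim(\ca{U}(\g))$ into $\Prim(D)$, and being compatible with $\triangleright$ and with the associative products it restricts to a morphism of post-Lie algebras $\phi|_{\g}\co\g\to\Prim(D)$ --- here one uses that, on $\Prim(D)=D_1$, the post-Lie bracket is the restriction of the associative commutator (by \ref{D bial item6}) and the post-Lie product is the restriction of $\triangleright$. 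Conversely, given a morphism of post-Lie algebras $\psi\co\g\to\Prim(D)$, the inclusion $\Prim(D)=D_1\hookrightarrow(D,\cdot)$ is a morphism of Lie algebras for the commutator bracket, so $\psi$ is a Lie morphism $\g\to(D,\cdot)_{\mathrm{Lie}}$ and extends, by the classical universal property of $\ca{U}$, to a unique algebra morphism $\Phi_{\psi}\co\ca{U}(\g)\to D$. Since $\Phi_{\psi}(\g)\subseteq\Prim(D)$ and $\g$ generates $\ca{U}(\g)$, the standard argument shows $\Phi_{\psi}$ is automatically a bialgebra morphism, and it respects filtrations because $\Phi_{\psi}(\g)\subseteq D_1\subseteq D^1$ and $D^iD^j\subseteq D^{i+j}$.

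The crux is that $\Phi_{\psi}$ also intertwines the $D$--products. I would first record the following: in any $D$--bialgebra the operation $\triangleright\co D\ot D\to D$ is completely determined by its restriction to $D_1\ot D_1$. Indeed, \ref{D bial item5} together with $1\triangleright X=X$ computes $X\triangleright y$, for $y\in D_1$ and $X$ a product of elements of $D_1$, by induction on the length of $X$; then \ref{D bial item4} and $X\triangleright1=0$ determine $X\triangleright Y$ for arbitrary $Y$ by induction on the length of $Y$, using that $D_1$ generates $D$. This is precisely the statement that $\triangleright$ on $\ca{U}(\g)$ is the post-symmetric brace extension of $\triangleright$ on $\g$ provided by Theorem \ref{thm:tbrD} and Proposition \ref{prop:equivt}, a construction visibly functorial in the post-Lie algebra. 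Consequently, since $\Phi_{\psi}$ is an algebra and coalgebra morphism agreeing with $\psi$ on the generating subspace $\g$, an induction mirroring this recursion --- applying \ref{D bial item4} and \ref{D bial item5} on both sides --- gives $\Phi_{\psi}(X\triangleright Y)=\Phi_{\psi}(X)\triangleright\Phi_{\psi}(Y)$ for all $X,Y\in\ca{U}(\g)$; the same argument shows that $\ca{U}$ of a post-Lie morphism is a morphism of $D$--bialgebras, so $\ca{U}$ and $\Prim$ are indeed functors between the stated categories.

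It remains to see that $\phi\mapsto\phi|_{\g}$ and $\psi\mapsto\Phi_{\psi}$ are mutually inverse and natural. By construction $\Phi_{\psi}|_{\g}=\psi$; conversely, for a $D$--bialgebra morphism $\phi$ both $\phi$ and $\Phi_{\phi|_{\g}}$ are algebra morphisms $\ca{U}(\g)\to D$ restricting to $\phi|_{\g}$ on $\g$, hence coincide by the uniqueness in the universal property of $\ca{U}$. Naturality in $\g$ and in $D$ is immediate: restriction to primitives is functorial, and $\psi\mapsto\Phi_{\psi}$ is characterized by its restriction to $\g$. Reading off the unit (the identity $\g\to\Prim(\ca{U}(\g))=\g$) and the counit (the canonical surjection $\ca{U}(\Prim(D))\to D$ induced by $\Prim(D)\hookrightarrow D$) then exhibits the bijection as an adjunction $\ca{U}\dashv\Prim$.

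The main obstacle is the middle step: verifying that $\Phi_{\psi}$ is compatible with $\triangleright$, i.e. propagating the single identity $\psi(x\triangleright y)=\psi(x)\triangleright\psi(y)$, $x,y\in\g$, through the recursion encoded in axioms \ref{D bial item1}, \ref{D bial item4}, \ref{D bial item5} (equivalently, through the post-symmetric brace description of $\triangleright$ on an enveloping algebra). This is bookkeeping rather than a conceptual difficulty, but it is the only place where genuine work is needed.
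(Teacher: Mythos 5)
Your proposal is correct and follows essentially the same route as the paper: extend a post-Lie morphism $\g\to\Prim(D)$ to the unique algebra morphism $\ca{U}(\g)\to D$ via the classical universal property, check the coalgebra compatibility by the standard primitives-generate argument, and verify compatibility with $\triangleright$ by induction along the filtration/recursion encoded in \ref{D bial item1}, \ref{D bial item4}, \ref{D bial item5}. You merely spell out more of the bookkeeping (the recursion determining $\triangleright$ from $D_1\ot D_1$, naturality, and the unit/counit) than the paper's brief proof does.
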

\begin{proof}
	Let $\g$ be a post-Lie algebra and $D$ be a $D$--bialgebra. 
	Let $f\co \g \to \Prim(D)$ be a morphism of post-Lie algebras. There exists a unique morphism of algebras 
	$\widetilde{f}\co \ca{U}(\g)\to D$ that extends $f$, that is, such that $\Prim(\widetilde{f})\circ \eta=f$ where $\eta\co \g \to \Prim(\ca{U}(\g))$ is the canonical identification. 
	It remains to prove that $\widetilde{f}$ is a morphism of coalgebras and that   $\widetilde{f}(X\triangleright Y)= \widetilde{f}(X)\triangleright \widetilde{f}(Y)$ for all $X,Y\in \ca{U}(\g)$. The first assertion is straightforward, while the second one can be shown by an induction on $n$ in the filtration $\{\ca{U}_n(\g)\}_n$.    
\end{proof}

For a $D$--bialgebra $(D,\cdot,1,\Delta,\epsilon,\triangleright)$, let $\ast\co D\ot D\to D$ be the linear map given by  
\begin{equation}\label{eq:ast prod}
A \ast B :=  A_{(1)}\cdot (A_{(2)} \triangleright B) \text{ for all } A,B \in D.
\end{equation}  
The following is a straightforward, though a bit long, verification. 
\begin{prop}
 $(D,\ast,1,\Delta,\epsilon)$ is a bialgebra. 
\end{prop}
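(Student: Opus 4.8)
The plan is to verify directly that $(D,\ast,1,\Delta,\epsilon)$ satisfies the bialgebra axioms, using the $D$--bialgebra axioms \ref{D bial item1}--\ref{D bial item6} and especially the compatibilities \ref{D bial item3}--\ref{D bial item5}. The element $1$ is a two-sided unit for $\ast$: indeed $1\ast B = 1_{(1)}\cdot(1_{(2)}\triangleright B) = 1\cdot(1\triangleright B) = B$ by \ref{D bial item1}, and $A\ast 1 = A_{(1)}\cdot(A_{(2)}\triangleright 1) = A_{(1)}\cdot\epsilon(A_{(2)}) = A$ by \ref{D bial item1} and counitality. Also $\epsilon$ is multiplicative for $\ast$: $\epsilon(A\ast B)=\epsilon(A_{(1)})\epsilon(A_{(2)}\triangleright B)$, and since $A_{(2)}\triangleright B \in \ker\epsilon$ unless the $\triangleright 1$ term is hit, one checks $\epsilon(A_{(2)}\triangleright B)=\epsilon(A_{(2)})\epsilon(B)$ (this follows by expanding $B$ as $1$ plus an element of $\ker\epsilon$ and using \ref{D bial item4} together with \ref{D bial item1}), whence $\epsilon(A\ast B)=\epsilon(A)\epsilon(B)$.

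The substantive steps are associativity of $\ast$ and the fact that $\Delta$ is an algebra morphism for $\ast$. For associativity one expands
\[
(A\ast B)\ast C = \bigl(A_{(1)}\cdot(A_{(2)}\triangleright B)\bigr)\ast C = A_{(1)}\cdot(A_{(2)}\triangleright B)_{(1)}\cdot\bigl((A_{(2)}\triangleright B)_{(2)}\triangleright C\bigr),
\]
and uses \ref{D bial item3} to write $(A_{(2)}\triangleright B)_{(1)}\otimes(A_{(2)}\triangleright B)_{(2)} = (A_{(2)}\triangleright B_{(1)})\otimes(A_{(3)}\triangleright B_{(2)})$, giving
\[
(A\ast B)\ast C = A_{(1)}\cdot(A_{(2)}\triangleright B_{(1)})\cdot\bigl((A_{(3)}\triangleright B_{(2)})\triangleright C\bigr).
\]
On the other side, $A\ast(B\ast C) = A_{(1)}\cdot\bigl(A_{(2)}\triangleright(B_{(1)}\cdot(B_{(2)}\triangleright C))\bigr)$, and \ref{D bial item4} turns the inner term into $(A_{(2)}\triangleright B_{(1)})\cdot\bigl(A_{(3)}\triangleright(B_{(2)}\triangleright C)\bigr)$. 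The two expressions then agree provided
\[
(A_{(3)}\triangleright B_{(2)})\triangleright C \;+\; \text{(cross terms)} \;=\; A_{(3)}\triangleright(B_{(2)}\triangleright C),
\]
which is exactly the content of the generalized form of \ref{D bial item5}: the identity $x\triangleright(X\triangleright Y) = (x\triangleright X)\triangleright Y + X\triangleright(x\triangleright Y)$ upgrades, for arbitrary $A\in D$ rather than $A\in D_1$, to $A_{(1)}\triangleright(X\triangleright Y) = (A_{(1)}\triangleright X)\triangleright(A_{(2)}\triangleright' \dots)$-type Sweedler identity. I would first isolate this "$\triangleright$ is a generalized derivation" lemma --- namely that $A\triangleright(X\triangleright Y) = (A_{(1)}\triangleright X)\triangleright(A_{(2)}\triangleright Y)$ for all $A,X,Y\in D$ --- proving it by induction on the filtration degree of $A$ using \ref{D bial item5} and \ref{D bial item3}, and then the associativity computation above closes immediately.

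For $\Delta(A\ast B)=\Delta(A)\ast\Delta(B)$, write $\Delta(A\ast B) = \Delta(A_{(1)})\cdot\Delta(A_{(2)}\triangleright B)$; since $\Delta$ is multiplicative for $\cdot$ and $\Delta(A_{(2)}\triangleright B) = (A_{(2)}\triangleright B_{(1)})\otimes(A_{(3)}\triangleright B_{(2)})$ by \ref{D bial item3}, one gets
\[
\Delta(A\ast B) = \bigl(A_{(1)}\cdot(A_{(3)}\triangleright B_{(1)})\bigr)\otimes\bigl(A_{(2)}\cdot(A_{(4)}\triangleright B_{(2)})\bigr),
\]
using cocommutativity to reorder the tensor factors; this is precisely $(A_{(1)}\ast B_{(1)})\otimes(A_{(2)}\ast B_{(2)}) = \Delta(A)\ast\Delta(B)$, and the compatibility of $\epsilon$ with the unit and $\Delta$ is immediate. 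The main obstacle I anticipate is bookkeeping the Sweedler indices correctly in the associativity check and in proving the generalized-derivation lemma; since $D$ is cocommutative (being generated by primitives, by \ref{D bial item2}, \ref{D bial item6}) the reorderings are harmless, but one must be careful to invoke \ref{D bial item5} in its iterated form rather than only on $D_1$.
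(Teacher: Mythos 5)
Your overall plan (a direct check of the bialgebra axioms from \ref{D bial item1}--\ref{D bial item6}) is exactly what the paper intends, and your verification of the unit, the counit, and of $\Delta(A\ast B)=\Delta(A)\ast\Delta(B)$ is fine. The associativity step, however, contains a genuine error. When you expand $(A\ast B)\ast C$ you must apply $\Delta$ to the \emph{whole} element $A_{(1)}\cdot(A_{(2)}\triangleright B)$; since $\Delta$ is multiplicative for $\cdot$, using \ref{D bial item3} and coassociativity the correct expansion is
\[
(A\ast B)\ast C \;=\; A_{(1)}\cdot\bigl(A_{(3)}\triangleright B_{(1)}\bigr)\cdot\Bigl(\bigl(A_{(2)}\cdot(A_{(4)}\triangleright B_{(2)})\bigr)\triangleright C\Bigr),
\]
whereas your formula drops the leg $A_{(2)}$ inside the left argument of the final $\triangleright$. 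This omission then forces you to invoke the ``generalized derivation'' identity $A\triangleright(X\triangleright Y)=(A_{(1)}\triangleright X)\triangleright(A_{(2)}\triangleright Y)$, which is false: already for $A=X=Y=x$ with $x$ primitive it reads $x\triangleright(x\triangleright x)=(x\triangleright x)\triangleright x+x\triangleright(x\triangleright x)$, i.e.\ it forces $(x\triangleright x)\triangleright x=0$, which fails for instance in $\mathcal U(\g)$ for $\g$ the free pre-Lie algebra on one generator (a post-Lie algebra with trivial bracket). In other words, $\triangleright$ is not a coalgebra-measuring of $\triangleright$ against itself; your misquotation of \ref{D bial item5} in the second display of the argument is where this slips in.

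The lemma you actually need is $(A\ast B)\triangleright C=A\triangleright(B\triangleright C)$ for all $A,B,C\in D$, i.e.\ that $A\mapsto A\triangleright(-)$ is a morphism from $(D,\ast)$ to endomorphisms under composition. For $A=x$ primitive this is precisely \ref{D bial item5} rewritten via \eqref{eq:partialfor}, and the general case follows by induction on the filtration degree of $A$, using that $D_1$ generates $(D,\cdot)$ by \ref{D bial item2} together with \ref{D bial item3} and cocommutativity. With the corrected expansion above, cocommutativity lets you regroup $A_{(2)}\cdot(A_{(4)}\triangleright B_{(2)})$ as $A_{(3)}\ast B_{(2)}$ after renumbering the legs, so that
\[
(A\ast B)\ast C \;=\; A_{(1)}\cdot\bigl(A_{(2)}\triangleright B_{(1)}\bigr)\cdot\Bigl(\bigl(A_{(3)}\ast B_{(2)}\bigr)\triangleright C\Bigr)
\;=\; A_{(1)}\cdot\bigl(A_{(2)}\triangleright B_{(1)}\bigr)\cdot\Bigl(A_{(3)}\triangleright(B_{(2)}\triangleright C)\Bigr),
\]
which equals $A\ast(B\ast C)$ by \ref{D bial item4}. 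So your argument is salvageable, but the auxiliary lemma must be replaced and the Sweedler bookkeeping in the first expansion repaired; the rest of your verification (and the paper, which only asserts the check is straightforward) goes through as you wrote it.
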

We let 
\begin{equation}
\ast\co \cat{D-bialgebra}\to \cat{Bialgebra}\label{eq:functor ast}
\end{equation}
denote the induced functor. 
	\begin{rem}
		If $x$ belongs to $\Prim(D)$, then one has  
		\begin{equation}
		x\ast X=x\cdot X+x\triangleright X\label{eq:partialfor}
		\end{equation}
		 for all homogeneous element $X\in D$. 
	\end{rem}
\begin{rem}
 The product $\ast$ defined on the universal enveloping algebra of a post-Lie algebra is known as the \emph{Grossmann-Larson product}, see for example \cite{MW, MuntheKaas} and references therein.
\end{rem}

\section{Post-Lie Magnus expansion}\label{sec:magnus}

In this section we further investigate from the view-point of the $\TB$--algebras the so called \emph{post-Lie Magnus expansion}, an interesting series which can be defined in a (suitable completion) of any post-Lie algebra. 
 We start by recalling a few basic properties of post-Lie algebras following \cite{ManVid}, then we introduce the post-Lie Magnus expansion and we discuss its role in the integration of post-Lie algebras.

\subsection{Post-Lie Magnus expansion and BCH formula}

Let $(\g,[-,-],\triangleright)$ be a post-Lie algebra and let $[[-,-]]:\g\otimes\g\rightarrow\g$ the Lie bracket defined in \eqref{eq:newLie}. 
Let $x\in\g$ and define $\nabla:\g\rightarrow\operatorname{End}_{\mathbb K}(\g)$ by
\begin{equation}
\nabla_x(y)=x\triangleright y,\forall y\in\g. 
\end{equation}

\begin{lem}
$\nabla_x$ is a derivation of $(\g,[-,-])$ and $(\nabla,\g)$ is a representation of the Lie algebra $(\g,[[-,-]])$.
\end{lem}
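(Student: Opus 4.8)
The plan is to verify the two assertions directly from the post-Lie axioms \eqref{eq:pl1} and \eqref{eq:pl2}. First I would show that $\nabla_x$ is a derivation of $(\g,[-,-])$: this is literally the content of \eqref{eq:pl1}, since $\nabla_x([y,z]) = x\triangleright[y,z] = [x\triangleright y, z] + [y, x\triangleright z] = [\nabla_x y, z] + [y, \nabla_x z]$. So this first claim requires essentially no work beyond unwinding the definition of $\nabla$.

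For the second assertion, I must check that $\nabla\co(\g,[[-,-]]) \to \operatorname{End}_{\mathbb K}(\g)$ is a Lie algebra morphism, i.e. that $\nabla_{[[x,y]]} = [\nabla_x, \nabla_y] = \nabla_x\nabla_y - \nabla_y\nabla_x$ as operators on $\g$. The plan is to evaluate both sides on an arbitrary $z\in\g$. On the one hand, using \eqref{eq:newLie}, $\nabla_{[[x,y]]}(z) = [[x,y]]\triangleright z = (x\triangleright y)\triangleright z - (y\triangleright x)\triangleright z + [x,y]\triangleright z$. On the other hand, $[\nabla_x,\nabla_y](z) = x\triangleright(y\triangleright z) - y\triangleright(x\triangleright z) = {\mathrm a}_\triangleright(x,y,z) + (x\triangleright y)\triangleright z - {\mathrm a}_\triangleright(y,x,z) - (y\triangleright x)\triangleright z$, where I have added and subtracted the associated products to bring in the associators. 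Comparing, the equality $\nabla_{[[x,y]]}(z) = [\nabla_x,\nabla_y](z)$ reduces precisely to $[x,y]\triangleright z = {\mathrm a}_\triangleright(x,y,z) - {\mathrm a}_\triangleright(y,x,z)$, which is exactly the left post-Lie relation \eqref{eq:pl2}. Hence the identity holds.

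There is no real obstacle here; the lemma is a reformulation of the two defining post-Lie relations. The only mild care needed is bookkeeping with the associator terms in the second part, making sure the $(x\triangleright y)\triangleright z$ and $(y\triangleright x)\triangleright z$ contributions cancel against those coming from $[[x,y]] = x\triangleright y - y\triangleright x + [x,y]$, leaving exactly \eqref{eq:pl2}. I would present the computation as the short two-line display above and cite \eqref{eq:pl1} and \eqref{eq:pl2} explicitly.
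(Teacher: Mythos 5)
Your proof is correct and follows essentially the same route as the paper: the derivation property is just the first post-Lie axiom \eqref{eq:pl1}, and the representation property is the direct computation (omitted in the paper as ``simple'') that reduces, after expanding $[[x,y]]\triangleright z$ and rewriting $x\triangleright(y\triangleright z)-y\triangleright(x\triangleright z)$ via the associators, exactly to \eqref{eq:pl2}. In fact you cite the axioms more accurately than the paper, whose proof attributes the derivation property to \eqref{eq:pl2} when it is really \eqref{eq:pl1}.
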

\begin{proof}
The first statement is Formula \eqref{eq:pl2}, while the second follows from a simple computation which we omit.
\end{proof}
From now on $\overline\g$ and $\g$ will denote the Lie algebra $(\g,[[-,-]])$ and, with a slight abuse of notation, the Lie algebra $(\g,[-,-])$. Let $\mathcal U(\overline{\g})$ and $\mathcal U(\g)$ be the universal enveloping algebra of $\overline{\g}$ and of $\g$. Following \cite{ManVid}, for all $x\in\overline{\g}$ let $\sigma_x:\mathcal U(\g)\rightarrow\mathcal U(\g)$ be defined by $\sigma_x(A)=x\cdot A$, where $\cdot$ denotes the associative product in $\mathcal U(\g)$ and let $M:\overline{\g}\rightarrow\operatorname{End}_{\mathbb K}(\mathcal U(\g))$ be defined by
\begin{equation}
M(x)=M_x:=\nabla_x+\sigma_x,\label{eq:mapM}
\end{equation}
for all $x\in\overline{\g}$.
One has
\begin{lem}
$(\mathcal U(\g),M)$ is a representation of $\overline{\g}$, i.e. 
\begin{equation*}
M_{[[x,y]]}=[M_x,M_y],\,\forall x,y\in\overline{\g}.
\end{equation*} 
\end{lem}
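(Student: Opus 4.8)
The plan is to verify the identity $M_{[[x,y]]}=M_xM_y-M_yM_x$ by expanding both sides using the definition \eqref{eq:mapM}, namely $M_x=\nabla_x+\sigma_x$, and then matching terms according to the type of operator they produce. Writing $M_xM_y=(\nabla_x+\sigma_x)(\nabla_y+\sigma_y)=\nabla_x\nabla_y+\nabla_x\sigma_y+\sigma_x\nabla_y+\sigma_x\sigma_y$, the commutator $[M_x,M_y]$ breaks into $[\nabla_x,\nabla_y]+[\nabla_x,\sigma_y]+[\sigma_x,\nabla_y]+[\sigma_x,\sigma_y]$. The idea is to identify each of these four pieces separately.

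First I would handle $[\sigma_x,\sigma_y]$: since $\sigma_x$ is left multiplication by $x$ in the associative algebra $\mathcal U(\g)$, one has $\sigma_x\sigma_y=\sigma_{xy}$ (left multiplication by the product $xy$), so $[\sigma_x,\sigma_y]=\sigma_{xy}-\sigma_{yx}=\sigma_{[x,y]}$, using that $xy-yx=[x,y]$ holds in $\mathcal U(\g)$. Next, for $[\nabla_x,\nabla_y]$, I would invoke the preceding Lemma which asserts that $(\nabla,\g)$ is a representation of $\overline{\g}$, giving $[\nabla_x,\nabla_y]=\nabla_{[[x,y]]}$ as operators on $\g$; but here the operators act on all of $\mathcal U(\g)$, so I must first recall that $\triangleright$ extends to $\mathcal U(\g)$ as a derivation of the associative product (item \ref{D bial item5} / Formula \eqref{eq:prodtri3} applied with $X\in\g$), hence the relation $[\nabla_x,\nabla_y]=\nabla_{[[x,y]]}$ propagates from $\g$ to $\mathcal U(\g)$ because both sides are derivations agreeing on the generators $\g$. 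Finally, for the mixed terms $[\nabla_x,\sigma_y]+[\sigma_x,\nabla_y]$, I would compute directly on an element $A\in\mathcal U(\g)$: since $\nabla_x$ is a derivation, $\nabla_x(\sigma_y(A))=\nabla_x(y\cdot A)=(\nabla_x y)\cdot A+y\cdot\nabla_x(A)=\sigma_{x\triangleright y}(A)+\sigma_y(\nabla_x(A))$, so $[\nabla_x,\sigma_y]=\sigma_{x\triangleright y}$, and symmetrically $[\sigma_x,\nabla_y]=-\sigma_{y\triangleright x}$, whence the sum is $\sigma_{x\triangleright y-y\triangleright x}$.

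Assembling the four contributions gives $[M_x,M_y]=\nabla_{[[x,y]]}+\sigma_{x\triangleright y-y\triangleright x}+\sigma_{[x,y]}=\nabla_{[[x,y]]}+\sigma_{x\triangleright y-y\triangleright x+[x,y]}=\nabla_{[[x,y]]}+\sigma_{[[x,y]]}=M_{[[x,y]]}$, where the second-to-last step uses precisely the definition \eqref{eq:newLie} of the bracket $[[-,-]]$. This is exactly the claimed identity.

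The main obstacle — really the only non-routine point — is the justification that $[\nabla_x,\nabla_y]=\nabla_{[[x,y]]}$ holds as an identity of operators on the whole enveloping algebra $\mathcal U(\g)$ rather than merely on $\g$. The cleanest way around it is the derivation argument: both $[\nabla_x,\nabla_y]$ and $\nabla_{[[x,y]]}$ are derivations of $(\mathcal U(\g),\cdot)$ (the commutator of two derivations is a derivation, and $\nabla_z$ is a derivation for every $z\in\g$ by the extension of $\triangleright$), and a derivation of $\mathcal U(\g)$ is determined by its restriction to the generating subspace $\g$, on which the two agree by the preceding Lemma. Everything else is a short direct computation, so I would present the proof compactly, emphasizing this extension step.
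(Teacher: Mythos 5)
Your proof is correct, and it reaches the identity by the same basic mechanism as the paper (expanding $M=\nabla+\sigma$ and exploiting that $\nabla_x$, for $x\in\g$ primitive, acts as a derivation of $(\mathcal U(\g),\cdot)$), but the organization is genuinely different. The paper argues pointwise: it expands $[M_x,M_y](z)$ and $M_{[[x,y]]}(z)$ on an element $z$ and matches the two expressions by invoking the post-Lie axiom \eqref{eq:pl2} directly, thereby re-deriving inline the relation $[\nabla_x,\nabla_y]=\nabla_{[[x,y]]}$. You instead work at the operator level, splitting the commutator into the three blocks $[\sigma_x,\sigma_y]=\sigma_{[x,y]}$, $[\nabla_x,\sigma_y]+[\sigma_x,\nabla_y]=\sigma_{x\triangleright y-y\triangleright x}$, and $[\nabla_x,\nabla_y]=\nabla_{[[x,y]]}$, quoting the preceding lemma for the last block on $\g$ and promoting it to all of $\mathcal U(\g)$ by the observation that two derivations of $\mathcal U(\g)$ agreeing on the generating subspace $\g$ coincide; reassembling with \eqref{eq:newLie} then gives $M_{[[x,y]]}$. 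Your modular route buys a point the paper leaves implicit: it explains why the representation property of $\nabla$, proved only on $\g$, holds on the whole enveloping algebra, whereas the paper's computation tacitly uses \eqref{eq:pl2} with third argument in $\mathcal U(\g)$ (which in turn rests on \ref{D bial item5}); the paper's route, on the other hand, is a single self-contained calculation that does not need the determined-by-generators argument. One small citation slip: the fact that $x\triangleright(-)$ is a derivation of the associative product of $\mathcal U(\g)$ follows from \ref{D bial item4} (equivalently from \eqref{eq:prodtri3} with $X=x$ primitive), not from \ref{D bial item5}; the latter is the axiom $(x\cdot X)\triangleright y=x\triangleright(X\triangleright y)-(x\triangleright X)\triangleright y$, which plays no role in that particular step.
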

\begin{proof}
It suffices to compute 
\begin{eqnarray*}
[M_x,M_y](z)&=&M_x(\nabla_y+\sigma_y)(z)-M_y(\nabla_x+\sigma_x)(z)\\
&=&M_x(y\triangleright z+y\cdot z)-M_y(x\triangleright z+x\cdot z)\\
&=&x\triangleright (y\triangleright z)+x\triangleright (y\cdot z)+x\cdot (y\triangleright z)+x\cdot (y\cdot z)\\
&-&y\triangleright (x\triangleright z)-y\cdot(x\triangleright z)-y\triangleright (x\cdot z)-y\cdot(x\cdot z)\\
&=&x\triangleright (y\triangleright z)-y\triangleright (x\triangleright z)+(x\triangleright    y-y\triangleright x)\cdot z+[x,y]\cdot z,
\end{eqnarray*}
and
\begin{eqnarray*}
M_{[[x,y]]}(z)=(x\triangleright y-y\triangleright x)\triangleright z+[x,y]\triangleright z+(x\triangleright y-y\triangleright x)\cdot z+[x,y]\cdot z,
\end{eqnarray*}
which, thanks to \eqref{eq:pl2}, reduces to the result of the previous computation.
\end{proof}
Using the universal property of the enveloping algebra and the previous lemma, one can extend the application $M$ above defined to an application $M:\mathcal U(\overline{\g})\rightarrow\operatorname{End}_{\mathbb K}\big(\mathcal U(\g)\big)$ which defines on $\mathcal U(\g)$ a structure of (left) $\mathcal U(\overline{\g})$-module. 
More precisely one has the  following.   
\begin{prop}[\cite{ManVid}]\label{pro:impo}
The application $\phi:\mathcal U(\overline{\g})\rightarrow\mathcal U(\g)$, defined by
\begin{equation}
\phi(A)=M_A(1)\label{eq:GOiso}
\end{equation}
for all $A$ monomial in $\mathcal U(\overline{\g})$ and then extended by linearity to $\mathcal U(\overline{\g})$, is both an {\em isomorphism} of (left) $\mathcal U(\overline{\g})$-modules and of coalgebras.
\end{prop}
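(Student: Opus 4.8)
The plan is to exploit the fact (recorded just above in the text) that $M$ extends to an algebra morphism $M\colon\mathcal U(\overline{\g})\to\operatorname{End}_{\mathbb K}(\mathcal U(\g))$, so that the map $\phi$ of \eqref{eq:GOiso} is exactly the orbit map $A\mapsto M_A(1)$ of the element $1\in\mathcal U(\g)$ for the resulting left $\mathcal U(\overline{\g})$--module structure. From $M_{BA}=M_BM_A$ and $M_1=\operatorname{id}$ one gets at once $\phi(BA)=M_B(\phi(A))$ and $\phi(1)=1$, so that $\phi$ is a morphism of left $\mathcal U(\overline{\g})$--modules, $\mathcal U(\overline{\g})$ acting on itself by left multiplication and on $\mathcal U(\g)$ through $M$. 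It then remains to prove bijectivity and compatibility with the coproducts and counits.

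For bijectivity I would use the standard PBW filtrations $\{\mathcal U_n(\overline{\g})\}_n$ and $\{\mathcal U_n(\g)\}_n$. Writing $M_x=\nabla_x+\sigma_x$ as in \eqref{eq:mapM}, the operator $\sigma_x$ (left multiplication by $x\in\g\subset\mathcal U_1(\g)$) raises the filtration degree by one, while $\nabla_x$, being the derivation of the associative product extending the degree-preserving map $x\triangleright-\colon\g\to\g$, preserves it. Hence $M_x(\mathcal U_k(\g))\subseteq\mathcal U_{k+1}(\g)$, so $\phi(\mathcal U_n(\overline{\g}))\subseteq\mathcal U_n(\g)$, and the top-degree component of $\phi(x_1\cdots x_n)=M_{x_1}\cdots M_{x_n}(1)$ is the associative product $x_1\cdots x_n$. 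Since $\g$ and $\overline{\g}$ share the same underlying vector space, the PBW theorem yields $\operatorname{gr}\mathcal U(\overline{\g})\cong S(\overline{\g})=S(\g)\cong\operatorname{gr}\mathcal U(\g)$, and the associated graded map $\operatorname{gr}\phi$ is the identity of $S(\g)$; therefore $\phi$ is an isomorphism of filtered vector spaces.

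For the coproduct, the key point is that every $M_x$, $x\in\overline{\g}$, is a coderivation of the standard coalgebra $(\mathcal U(\g),\Delta)$: $\sigma_x$ is a coderivation since $x$ is primitive, and $\nabla_x$ is a coderivation because $\mathcal U(\g)$ carries a $D$--bialgebra structure by Theorem \ref{thm:tbrD}, so that \ref{D bial item3}, \ref{D bial item1} and the primitivity of $x$ give $\Delta(x\triangleright Y)=(x\triangleright Y_{(1)})\otimes Y_{(2)}+Y_{(1)}\otimes(x\triangleright Y_{(2)})$ — and $\nabla_x$ coincides with $x\triangleright-$ on $\mathcal U(\g)$ by \eqref{eq:prodtri3} (this identity may alternatively be checked directly by induction on the length of $Y$). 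Equivalently, if $\mathcal U(\g)\otimes\mathcal U(\g)$ is given the $\mathcal U(\overline{\g})$--module structure coming from the diagonal action $x\mapsto M_x\otimes\operatorname{id}+\operatorname{id}\otimes M_x$, this says that $\Delta$ is $\mathcal U(\overline{\g})$--linear; since the diagonal action of $A\in\mathcal U(\overline{\g})$ on a tensor $P\otimes Q$ is $(A_{(1)}\cdot P)\otimes(A_{(2)}\cdot Q)$, one obtains
\[
\Delta\phi(A)=\Delta(A\cdot 1)=A\cdot(1\otimes 1)=(A_{(1)}\cdot 1)\otimes(A_{(2)}\cdot 1)=\phi(A_{(1)})\otimes\phi(A_{(2)}),
\]
so that $\phi$ intertwines the two coproducts; compatibility with the counits follows from $\phi(1)=1$ and the fact that each $M_x$ maps $\mathcal U(\g)$ into the augmentation ideal, whence $\phi$ maps the augmentation ideal of $\mathcal U(\overline{\g})$ into that of $\mathcal U(\g)$. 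Combined with the bijectivity already established, this shows that $\phi$ is an isomorphism of coalgebras.

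I expect the coderivation property of $\nabla_x$ — that is, the control of the interaction between the coproduct and the extended post-Lie product on $\mathcal U(\g)$ — to be the only genuine obstacle; once it is secured (most economically through the $D$--bialgebra structure of Theorem \ref{thm:tbrD}), everything else is a formal manipulation of module and comodule structures, and the identification of $\phi$ with an orbit map makes the module part automatic.
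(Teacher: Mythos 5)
Your proof is correct, and its skeleton (filtration of $\mathcal U$, the identity $\phi(x_1\odot\cdots\odot x_n)\equiv x_1\cdots x_n \bmod \mathcal U_{n-1}(\g)$, and axiom \ref{D bial item3} of the $D$--bialgebra structure as the key input for the coproduct) is the same as the paper's, but you execute both halves differently. For bijectivity the paper proves surjectivity of $\phi$ on each filtered piece from the same congruence and then argues injectivity by assuming a kernel element and invoking that $\mathcal U(\g)$ is an integral domain; your passage to the associated graded, where $\operatorname{gr}\phi=\operatorname{id}_{S(\g)}$ via PBW, is more standard and in fact tighter, since a would-be kernel element is a linear combination of monomials rather than a single monomial, a point the paper's phrasing glosses over and the graded argument handles automatically. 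For the coalgebra compatibility the paper runs an explicit induction on the length of monomials using the recursion $\phi(x\odot X)=x\cdot\phi(X)+x\triangleright\phi(X)$ together with \ref{D bial item3}; you instead isolate the statement that each $M_x=\sigma_x+\nabla_x$ is a coderivation of $(\mathcal U(\g),\Delta)$ (which is exactly \ref{D bial item3} plus primitivity of $x$, once $\nabla_x$ is identified with $x\triangleright-$ via \eqref{eq:prodtri3}) and then conclude formally from $\Delta$ being $\mathcal U(\overline{\g})$--linear for the diagonal action and $\phi$ being the orbit map of $1$. Your packaging makes the module-morphism claim of the statement explicit (the paper leaves it implicit) and replaces the inductive computation by a structural one; the paper's computation has the mild advantage of not requiring the intermediate identification of $\nabla_x$ with the extended product, which you correctly flag as the one point needing a separate (easy) induction.
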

\begin{proof}
First note that the universal property of the enveloping algebra implies that the application $M:\mathcal U(\overline{\g})\rightarrow\operatorname{End}_{\mathbb K}\big(\mathcal U(\g)\big)$ obtained from the application defined in \eqref{eq:mapM} is a morphism of associative algebras, i.e.
\[
M_{x_1\odot\cdots\odot x_n}=M_{x_1}\circ\cdots\circ M_{x_n},
\]
for all monomials $x_1\odot\cdots\odot x_n$ in $\mathcal U(\overline{\g})$. From this observation follows that $\phi$ restricts to the identity map from $\mathcal U_{\leq 1}(\overline{\g})$ to $\mathcal U_{\leq 1}(\g)$. Moreover, a simple induction on the length of the monomial shows that 
\begin{equation}
\phi(x_1\odot\cdots\odot x_n)-x_1\cdot x_2\cdots x_n\in\mathcal U_{\leq n-1}(\g),\label{eq:aprox}
\end{equation}
for all monomial of degree $n$ in $\mathcal U(\overline{\g})$. From this one easily deduces that the restriction of $\phi$ to $\mathcal U_{\leq n}(\overline{\g})$ surjects onto $\mathcal U_{\leq n}(\g)$, for all $n\geq 2$. On the other hand, if such a restriction had non trivial kernel, then there should be a monomial of length $n$, say $x_1\cdot x_2\cdots x_n$, contained in $\mathcal U_{\leq n-1}(\g)$, which would imply that $x_1\cdot x_2\cdots x_n=0$. Since $\mathcal U(\g)$ is an integral domain, at least one $x_i$ should be equal to zero, which, in turn would imply that $x_1\odot\cdots\odot x_n=0$, proving that, for all $n\geq 2$, the restriction of $\phi$ is a linear isomorphism between $\mathcal U_{\leq n}(\overline{\g})$ and $\mathcal U_{\leq n}(\g)$. We are left to show that $\phi$ is compatible with the coalgebra structures of the universal enveloping algebras.
The compatibility of $\phi$ with the counits is clear. Then note that if $A=x\odot X$ where $X$ is a monomial of length $n-1$
\begin{equation}
\phi(A)=x\cdot\phi(X)+x\triangleright\phi(X).\label{eq:recurs}
\end{equation}
Writing $\Delta$ to denote the coproduct both in $\mathcal U(\overline{\g})$ and in $\mathcal U(\g)$
one has 
\[
\Delta\circ\phi(x)=(\phi\otimes\phi)\circ\Delta(x),\,\forall x\in\overline{\g}.
\]
Suppose that this identity holds for every monomial $X\in\mathcal U(\overline{\g})$ of length $n-1$, i.e.
\begin{equation}
\big(\phi (X)\big)_{(1)}\otimes \big(\phi (X)\big)_{(2)}=\Delta\circ\phi(X)=(\phi\otimes\phi)\circ\Delta (X)=\phi(X_{(1)})\otimes\phi(X_{(2)}),\label{eq:recur2}
\end{equation}
then, if $A=x\odot X$, one can compute 
\begin{eqnarray*}
&&(\phi\otimes\phi)\circ \Delta(A)=(\phi\otimes\phi)\circ \Delta(x\odot X)=(\phi\otimes\phi)(\Delta x\odot\Delta X)\\
&=&(\phi\otimes\phi)\big((x\odot X_{(1)})\otimes X_{(2)}+X_{(1)}\otimes (x\odot X_{(2)})\big)\\
&=&\big(x\cdot\phi(X_{(1)})+x\triangleright\phi(X_{(1)})\big)\otimes\phi(X_{(2)})+\phi(X_{(1)})\otimes \big(x\cdot\phi(X_{(2)})+x\triangleright\phi(X_{(2)})\big).
\end{eqnarray*}
On the other hand
\begin{eqnarray*}
&&\Delta\circ\phi (A)\stackrel{\eqref{eq:recurs}}{=}\Delta\big(x\cdot\phi(X)+x\triangleright\phi(X)\big)=\Delta\big(x\cdot\phi(X)\big)+\Delta \big(x\triangleright\phi(X)\big)\\
&\stackrel{\eqref{D bial item3}}{=}&\Delta x\cdot\Delta\big(\phi(X)\big)+\Delta x\triangleright\Delta (\phi(X))\\ 
&\stackrel{\eqref{eq:recur2}}{=}&\Delta x\cdot (\phi\otimes\phi)(X_{(1)}\otimes X_{(2)})+\Delta x\triangleright(\phi\otimes\phi)(X_{(1)}\otimes X_{(2)})\\
&=&\big(x\cdot\phi(X_{(1)})+x\triangleright\phi(X_{(1)})\big)\otimes\phi(X_{(2)})+\phi(X_{(1)})\otimes \big(x\cdot\phi(X_{(2)})+x\triangleright\phi(X_{(2)})\big).
\end{eqnarray*} 
\end{proof}

Recall the product $\ast$ defined in \eqref{eq:ast prod}. 

\begin{thm}[\cite{KLM}]\label{thm:klm}
$\phi$ is an isomorphism of bialgebras between $(\mathcal U(\overline{\g}),\odot,1,\Delta,\epsilon)$ and $(\mathcal U(\g),\ast,1,\Delta,\epsilon)$. 
\end{thm}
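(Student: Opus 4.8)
The plan is to build on Proposition~\ref{pro:impo}, which already establishes that $\phi$ is a bijection and a morphism of coalgebras; since the $\ast$-bialgebra structure carried by $\mathcal U(\g)$ has exactly the same unit, coproduct and counit as the canonical one, the only thing left to check is that $\phi$ is multiplicative, namely that
\[
\phi(A\odot B)=\phi(A)\ast\phi(B)\qquad\text{for all }A,B\in\mathcal U(\overline\g),
\]
together with the trivial equality $\phi(1)=M_1(1)=1$.

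Since $\mathcal U(\overline\g)$ is spanned by the monomials $x_1\odot\cdots\odot x_n$ with $x_i\in\overline\g$, I would prove the displayed identity by induction on the length $n$ of $A$, keeping $B$ arbitrary. The case $n=0$ is immediate because $1\ast Z=1\cdot(1\triangleright Z)=Z$ by \ref{D bial item1}. For $n\geq1$ write $A=x\odot X$ with $x\in\overline\g$ and $X$ a monomial of length $n-1$; applying the recursion \eqref{eq:recurs} with $X\odot B$ in place of $X$, then the induction hypothesis $\phi(X\odot B)=\phi(X)\ast\phi(B)$, and finally \eqref{eq:recurs} again to $\phi(x\odot X)$, one reduces the claim to the identity
\[
x\cdot\bigl(\phi(X)\ast\phi(B)\bigr)+x\triangleright\bigl(\phi(X)\ast\phi(B)\bigr)=\bigl(x\cdot\phi(X)+x\triangleright\phi(X)\bigr)\ast\phi(B).
\]

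The key point is that, as $x\in\overline\g$ is primitive in $\mathcal U(\g)$, the definition \eqref{eq:ast prod} of $\ast$ together with \ref{D bial item1} gives $x\ast Z=x_{(1)}\cdot(x_{(2)}\triangleright Z)=x\cdot Z+x\triangleright Z$ for every $Z\in\mathcal U(\g)$, i.e.\ \eqref{eq:partialfor}. Thus the left-hand side of the last display is $x\ast(\phi(X)\ast\phi(B))$ and the right-hand side is $(x\ast\phi(X))\ast\phi(B)$, so the two coincide by associativity of $\ast$ on the $D$-bialgebra $\mathcal U(\g)$ — which is exactly the content of the proposition stated just before the theorem. This completes the induction and hence shows that $\phi$ is a morphism, and therefore an isomorphism, of bialgebras.

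I do not expect a real obstacle here: all the substantial work is already packaged into Proposition~\ref{pro:impo} (surjectivity of $\phi$ through the standard filtration and compatibility with the coproducts) and into the associativity of the product $\ast$. The only things to be careful about are setting up the induction on the length of $A$ with $B$ left free — so that $\phi(X\odot B)$ is covered by the inductive hypothesis even though $X\odot B$ is typically a long monomial — and checking that the operation $x\triangleright(-)$ occurring in \eqref{eq:recurs} is literally the same $D$-bialgebra product that appears in \eqref{eq:ast prod} and \eqref{eq:partialfor}.
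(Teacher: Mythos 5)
Your proof is correct and is essentially the ``direct verification'' the paper leaves implicit: Proposition~\ref{pro:impo} supplies the coalgebra isomorphism, and your induction on the length of $A$ via \eqref{eq:recurs}, \eqref{eq:partialfor} and the associativity of $\ast$ (the proposition preceding the theorem) amounts exactly to the identity $\phi(x_1\odot\cdots\odot x_n)=x_1\ast\cdots\ast x_n$ that the paper itself invokes immediately after the statement. No gap: \eqref{eq:partialfor} indeed holds for arbitrary $Z\in\mathcal U(\g)$ by primitivity of $x$ and \ref{D bial item1}, and the $\triangleright$ in \eqref{eq:recurs} is the same $D$--bialgebra product, as you note.
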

\begin{proof}
 It is a direct verification. 
\end{proof}
\begin{rem}
The previous theorem was first proven in \cite{KLM}, even though there it was phrased in a slightly different way. The analogue if this statement for pre-Lie algebra was first shown to be true in \cite{GO1,GO2}.
\end{rem}

\begin{rem}\label{rem:category}
From a more categorical point of view one can rephrase the result of \ref{thm:klm} in terms of the existence of an \emph{isomorphism of functors} 
\begin{equation}
\phi\co \mathcal{U}\circ \Join \to \ast \circ \mathcal{U}\label{key}
\end{equation} 
extending the identity 
\[
\Prim \circ\, \mathcal{U}\circ \Join =\Prim \circ \ast \circ\, \mathcal{U}.
\]
In the previous formulas, the functor $\ast$ was defined in \eqref{eq:functor ast}, $\mathcal U$, and $\operatorname{Prim}$ were defined in \eqref{eq:univ funct PSB} and respectively in \eqref{eq: functor primitive} while 
\[
\Join \co \cat{PostLie} \to  \cat{Lie}
\]
is the functor induced by the bracket \eqref{eq:newLie}.
\end{rem}

In what follows we will need to work with a suitable completion of the Hopf algebras $\mathcal U_{\cdot}(\g)$, ${\mathcal U}_{\ast}(\g)$ and $\mathcal U_{\odot}(\overline{\g})$. We will denote these completions as $\hat{\mathcal U}_{\cdot}(\g)$, $\hat{{\mathcal U}}_{\ast}(\g)$ and $\hat{\mathcal U}_{\odot}(\overline{\g})$, without making any notational difference between the structural operations of the original and the completed Hopf algebras (i.e. we will denote by $\Delta$ both the coproduct on the original and of the complete Hopf algebra). The completions we will be interested in are obtained as inverse limits of quotients of the original Hopf algebras by the powers of their augmentation ideals, and in these enlarged Hopf algebras it will make sense consider infinite series like exponentials or logarithms. Furthermore, the original Lie algebras $\g$ and $\overline{\g}$ inherit a completion from the ambient completed universal enveloping algebras. To save notation we will not introduce new symbols to distinguish between the original and the completed Lie algebras, hoping that it will be clear from the context which Lie algebras we are considering. For more details about the completion of Hopf algebras we refer the reader to \cite{Quillen}, see also \cite{FloyMunth} and \cite{KI}. 

Note that since $\phi(x_1\odot\cdots\odot x_n)=x_1\ast\cdots\ast x_n$, the map $\phi$ extends to an isomorphism $\phi:\hat{\mathcal U}_{\odot}(\overline{\g})\rightarrow \hat{\mathcal U}_{\ast}(\g)$ of complete Hopf algebras. Moreover, denoting with $\diamond$ any one of the products $\cdot,\ast$ and $\odot$, with $\mathfrak G$ any one of the Lie algebras $\g$ or $\overline{\g}$, and with $\exp_{\diamond}$ and $\log_{\diamond}$ the corresponding \emph{exponential} and \emph{logarithm} maps, one has that
$\xi$ is a \emph{group-like} element of $\hat{\mathcal U}_{\diamond}(\mathfrak G)$ \em{if and only if} $\xi=\exp_{\diamond}(x)$ for a \em{unique} $x\in\operatorname{Prim}\big(\hat{\mathcal U}_{\diamond}(\mathfrak G)\big)$. In other words, 
\[
\exp_{\diamond}:\operatorname{Prim}(\hat{\mathcal U}_{\diamond}(\mathfrak G))\rightarrow\operatorname{Group}(\hat{\mathcal U}_{\diamond}(\mathfrak G)),
\]
is a bijection whose inverse is 
\[
\log_{\diamond}:\operatorname{Group}(\hat{\mathcal U}_{\diamond}(\mathfrak G))\rightarrow \operatorname{Prim}(\hat{\mathcal U}_{\diamond}(\mathfrak G)).
\]

\begin{rem}
Recall that $\operatorname{Prim}(\hat{\mathcal U}_{\diamond}(\mathfrak G))=\mathfrak G$.
\end{rem}

In particular the application $\eta:\overline{\g}\rightarrow\g$, defined by
\begin{equation}
\eta=\log_{\cdot}\circ\phi\circ\exp_{\odot}\label{eq:eta},
\end{equation}
is a bijection. 
\begin{rem} Since $\overline{\g}$ and $\g$ are two Lie algebra having the same underlying vector space, $\eta$ can be thought of as a map between $\g$ and itself. Furthermore, note that $\phi:\hat{\mathcal U}_{\odot}(\overline\g)\rightarrow\hat{\mathcal U}_{\ast}(\g)$ restricts to a bijection $\phi:\operatorname{Group}(\hat{\mathcal U}_{\odot}(\overline\g))\rightarrow\operatorname{Group}(\hat{\mathcal U}(\g))$. In particular, since $\hat{\mathcal U}_{\cdot}(\g)$ and $\hat{\mathcal U}_{\ast}(\g)$ have the same coalgebra structure,
$\phi(\exp_{\odot}x)=\exp_{\ast}x$ is a group-like element of $\hat{\mathcal U}_{\cdot}(\g)$. 
\end{rem}

Let $\chi:\g\rightarrow\g$ be the inverse of $\eta$, i.e. $\chi$ is the application that takes every $x\in\g$ to the (unique) element $\chi(x)\in\g$ such that 
\[
\exp_{\cdot}(x)=\exp_{\ast}\big(\chi(x)\big).
\]
\begin{defn}\label{de: pl magnus expansion}
The map $\chi$ is called the \emph{post-Lie Magnus expansion}.
\end{defn}

\begin{rem}\label{rem:mag}
The map $\chi$ is a very interesting mathematical object. It was introduced in \cite{EFLMMK}, see also \cite{KLM}, to analyze iso-spectral type flow equations defined on a post-Lie algebra whose post-Lie product was coming from a solution of the modified Yang-Baxter equation. More in general, in \cite{KI}, it was observed that given a post-Lie algebra $(\g,\triangleright)$, for every $x\in\g$,  $\chi_x(t):=\chi(tx)\in\g[[t]]$ satisfies the following non-linear ODE
\[
\dot{\chi}_x(t)=(d\exp_{\ast})^{-1}_{-\chi_x(t)}\big(\exp_{\ast}(-\chi_x (t))\triangleright x\big),
\]
and that, the non-linear post-Lie differential equation 
\[
{\dot x}(t)=-x(t)\triangleright x(t),
\]
for $x=x(t)\in\g[[t]]$, with initial condition $x(0)=x_0\in\g$, has as a solution
\[
x(t)=\exp_\ast (-\chi_{x_0}(t))\triangleright x_0.
\]
In the same reference $\chi$ was dubbed \emph{post-Lie Magnus expansion} to stress that such a map is the analogue, for post-Lie algebras, of the so called \emph{pre-Lie Magnus expansion}, see \cite{Manchon} and references therein. This can be defined on every \emph{completed} and \emph{unital} pre-Lie algebra $(\g,\triangleright)$ as the map $\Omega:\g\rightarrow\g$ satisfying the recursive relation
\[
\Omega(x)=\sum_{k=0}^\infty\frac{B_k}{k!}L^k_{\Omega(x)}(x),
\]
where, for all $y\in\g$, $L_y:\g\rightarrow\g$, is defined by $L_y(x)=y\triangleright x$ and the $B_k$ are the Bernoulli numbers, $B_0=1,B_1=-\frac{1}{2}, B_2=\frac{1}{6}, B_3=0,....$. In particular, the first terms of the previous expansion read as
\begin{equation}
\Omega(x)=x-\frac{1}{2}x\triangleright x+\frac{1}{4}(x\triangleright x)\triangleright x+\frac{1}{12}x\triangleright(x\triangleright x)+\cdots\label{eq:preLieMag}
\end{equation}
which is the \emph{compositional} inverse of the (left) \emph{pre-Lie exponential map} 
\[
\exp_{\triangleright}(x):=x+\frac{1}{2}x\triangleright x+\frac{1}{6}x\triangleright (x\triangleright x)+\cdots
\]
In other words, the (left) pre-Lie Magnus expansion is the (left) pre-Lie logarithm, i.e. $\Omega(x)=\log_{\triangleright}(1+x)$, for all $x\in\g$. The pre-Lie Magnus expansion turned out to be an important object in several different areas of mathematics like dynamical systems, see \cite{AG}, combinatorics \cite{CP}, \cite{KM} and \cite{BS}, quantum field theory \cite{EFP} and deformation theory, see \cite{bandiera} and \cite{DSV}. At the best of our knowledge, \eqref{eq:preLieMag} was dubbed pre-Lie Magnus expansion in \cite{KM}. A very nice and comprehensive review of the classical Magnus expansion and of its many applications can be found in \cite{BCOR}, see also \cite{C}.
\end{rem}
Let $(\g,\triangleright)$ be a post-Lie algebra and let $\operatorname{Exp}(\nabla_x)$ be the automorphism of (the vector space) $\g$ defined by 
\begin{equation}
\operatorname{Exp}(\nabla_x)(y)=y+\sum_{n\geq 1}\frac{\nabla^n_x(y)}{n!}\label{eq:canonical aut}. 
\end{equation}

\begin{prop}\label{prop:aut}
 For all $x\in\g$, one has 
\begin{equation} 
\operatorname{Exp}\big(\nabla_{\chi(x)}\big)(y)=\sum_{n\geq 0}\frac{1}{n!}T_n(\underbrace{x,\dots,x}_{n-\text{times}};y),\,\forall y\in\g.\label{eq:bracexp}
\end{equation}
\end{prop}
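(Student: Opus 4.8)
The plan is to work inside the completed enveloping algebra $\hat{\mathcal U}_{\cdot}(\g)$ and exploit the fact, established in Theorem~\ref{thm:tbrD} and Proposition~\ref{prop:equivt}, that the post-symmetric braces $T_n$ encode the action of $\triangleright$ on $\mathcal U(\g)$. First I would recall the defining property of the post-Lie Magnus expansion $\chi$: by Definition~\ref{de: pl magnus expansion} one has $\exp_{\cdot}(x)=\exp_{\ast}(\chi(x))$ in $\hat{\mathcal U}_{\cdot}(\g)$, and by the remark following Proposition~\ref{pro:impo} together with Theorem~\ref{thm:klm}, $\exp_{\ast}(\chi(x))=\phi(\exp_{\odot}(\chi(x)))$, so that $\exp_{\cdot}(x)$ is the image under $\phi$ of a group-like element of $\hat{\mathcal U}_{\odot}(\overline\g)$ with primitive logarithm $\chi(x)$. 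The key point is then to compute, for $y\in\g$, the element $\exp_{\cdot}(x)\triangleright y$ in two ways.

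On one hand, since $\triangleright$ extends to $\mathcal U(\g)$ as a derivation in its left argument is false — rather it is a derivation in the \emph{right} argument by \ref{D bial item4}; what we need is that the \emph{left} action of a group-like element is multiplicative in the sense controlled by \ref{D bial item5}. More directly: by Formula~\eqref{eq:protri1} and Proposition~\ref{prop:equivt}, for a monomial $X=x^{\otimes n}$ one has $x^n\triangleright y=T_n(x,\dots,x;y)$, where $x^n$ denotes the $n$-fold associative product; this follows because $\Delta_{sh}^{n-1}(x^n)$ — computed in the \emph{shuffle} sense relevant to Proposition~\ref{prop:tb}(iii) — expands the iterated brace exactly as in \eqref{eq:c2}, and the higher braces $T_n$ are precisely the operations attached to the corollas $C_{n+1}$. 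Hence
\[
\exp_{\cdot}(x)\triangleright y=\sum_{n\geq 0}\frac{1}{n!}\,x^n\triangleright y=\sum_{n\geq 0}\frac{1}{n!}T_n(\underbrace{x,\dots,x}_{n};y),
\]
which is the right-hand side of \eqref{eq:bracexp}. (One must check that $\triangleright$, extended to the completion, is continuous so that it commutes with the infinite sum; this is immediate from the filtration-compatibility of all the structure maps.)

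On the other hand, I would show that the left-hand side of \eqref{eq:bracexp} equals $\exp_{\cdot}(x)\triangleright y$ as well. Write $\xi=\exp_{\odot}(\chi(x))$, a group-like element of $\hat{\mathcal U}_{\odot}(\overline\g)$. By Proposition~\ref{pro:impo}, $\phi(\xi)=M_\xi(1)$, and by \eqref{eq:recurs} iterated, for a monomial $z_1\odot\cdots\odot z_n$ one computes $\phi(z_1\odot\cdots\odot z_n)=M_{z_1}\cdots M_{z_n}(1)$ with $M_z=\nabla_z+\sigma_z$. The crucial algebraic identity is that for a group-like $\xi=\exp_\odot(w)$ with $w=\chi(x)$ primitive, the operator $M_\xi$ acting on the unit produces $\exp_{\cdot}(w)\in\hat{\mathcal U}_{\cdot}(\g)$ — indeed $M_w=\nabla_w+\sigma_w$ and $\nabla_w(1)=0$ by \ref{D bial item1}, so $M_w^k(1)$ unwinds into a sum over how many factors act by $\sigma$ versus $\nabla$; summing the exponential series and using that $\nabla_w$ is a derivation compatible with $\sigma_w$ in the way recorded by the proof of the lemma on $M_{[[x,y]]}$, one obtains $\phi(\exp_\odot w)=\exp_{\cdot}(w)\cdot$(correction terms that vanish on the unit) — but in fact we already know $\phi(\exp_\odot w)=\exp_\ast(w)$ is group-like in $\hat{\mathcal U}_{\cdot}(\g)$, and the defining property of $\chi$ says this equals $\exp_{\cdot}(x)$. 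Now apply $M_\xi$ to a general primitive $y$ rather than to $1$: since $M$ is an algebra morphism and $M_z(y)=z\triangleright y+z\cdot y$, an induction on the length of the monomial shows $M_\xi(y)=\phi(\xi)\cdot y+\phi(\xi)\triangleright y$; comparing the degree-zero-in-$y$ part recovers $\phi(\xi)=M_\xi(1)$ and the remaining part gives exactly $\phi(\xi)\triangleright y=\exp_{\cdot}(x)\triangleright y$. Finally, $\operatorname{Exp}(\nabla_{\chi(x)})(y)$ as defined in \eqref{eq:canonical aut} is $\sum_{n\geq 0}\frac{1}{n!}\nabla_{\chi(x)}^n(y)$, and since $\nabla_w$ is the left-multiplication-by-$w$ operator for $\triangleright$ on the sub-representation $\g\subset\hat{\mathcal U}_{\cdot}(\g)$ while $\triangleright$ is associative enough on primitives that $\nabla_w^n(y)=w^n\triangleright y$, this series is $\exp_{\cdot}(w)\triangleright y=\exp_{\cdot}(\chi(x))\triangleright y$. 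Wait — I must be careful: $\nabla_w^n(y)=w^n\triangleright y$ requires $w\triangleright(w\triangleright y)=(w\cdot w)\triangleright y$ type identities, which hold by \ref{D bial item5} only if $w$ were replaced appropriately; the honest statement is that $\operatorname{Exp}(\nabla_w)(y)=\exp_{\ast}(w)\triangleright y$ where the $\exp_\ast$ already absorbs the correction, and by definition of $\chi$ this is $\exp_{\cdot}(x)\triangleright y$. Combining the two computations yields \eqref{eq:bracexp}.

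The main obstacle I expect is the bookkeeping in the second paragraph above: reconciling the fact that $\triangleright$ is a derivation in its \emph{right} argument (property \ref{D bial item4}) with the need to iterate it in the \emph{left} argument via \ref{D bial item5}, and correctly tracking where the associative product absorbs ``lower-order'' corrections so that the naive identity $w^n\triangleright y = \nabla_w^n(y)$ holds \emph{after} passing to the $\exp_\ast$/Grossman--Larson picture. Concretely, the cleanest route is probably: (1) prove $x^n\triangleright y=T_n(x,\dots,x;y)$ purely from Proposition~\ref{prop:equivt} and \eqref{eq:protri1}–\eqref{eq:prodtri3} by induction on $n$, using \ref{D bial item5}; (2) prove $\operatorname{Exp}(\nabla_{\chi(x)})(y)=\exp_\ast(\chi(x))\triangleright y$ by recognizing that $\exp_\ast(\chi(x))=\sum_n \frac{1}{n!}\chi(x)^{\ast n}$ and that $\chi(x)^{\ast n}\triangleright y=\nabla_{\chi(x)}^n(y)$ since $a\ast b=a\cdot b+a\triangleright b$ reduces to $a\triangleright b$ after applying $(-\triangleright y)$ when $a$ is primitive — more precisely $(a\ast A)\triangleright y=a\triangleright(A\triangleright y)$ for primitive $a$, which is \ref{D bial item5}; (3) invoke the defining identity $\exp_\ast(\chi(x))=\exp_{\cdot}(x)$ and step (1) to identify the two sides. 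Each of (1)–(3) is a short induction; the only genuine subtlety is making sure continuity in the completion justifies interchanging the infinite sums, which follows from every operation respecting the augmentation filtration.
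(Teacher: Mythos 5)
Your final three-step route is essentially the paper's own proof: the identity $\nabla_{\chi(x)}^{n}(y)=\chi(x)^{\ast n}\triangleright y$, obtained from \ref{D bial item5} together with $a\ast X=a\cdot X+a\triangleright X$ for primitive $a$ (Formula \eqref{eq:partialfor}), gives $\operatorname{Exp}\big(\nabla_{\chi(x)}\big)(y)=\exp_{\ast}\big(\chi(x)\big)\triangleright y=\exp_{\cdot}(x)\triangleright y$, and then $x^{n}\triangleright y=T_{n}(x,\dots,x;y)$ (immediate from \eqref{eq:protri1}) identifies this with the stated series, exactly as in the paper. (Your exploratory detour via the claim $M_{\xi}(y)=\phi(\xi)\cdot y+\phi(\xi)\triangleright y$ is false in general --- already for $\xi=z\odot w$ extra terms such as $z\cdot(w\triangleright y)$ appear --- but you discard it and it plays no role in the final argument.)
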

\begin{proof}
Recall from \eqref{eq:partialfor}, that for $a\in \g$ and $b\in U(\g)$, one has $a\ast b=ab+ a\triangleright b$.
Using \ref{D bial item5}, this gives $a\triangleright( b\triangleright y)=(a\ast b)\triangleright y$. We deduce that   
\begin{equation*}
\operatorname{Exp}\big(\nabla_{\chi(x)}\big)(y)=y+ \chi(x)\triangleright y  + \frac{1}{2!}\big(\chi(x)\ast \chi(x)\big)\triangleright y +  ... = \exp_{\ast}\big(\chi(x)\big)\triangleright y.
\end{equation*} 
The latter is, by definition of $\chi$, equal to $\exp_{\cdot}(x)\triangleright y$. 
\end{proof}
\begin{rem}
It is worth to mention that \eqref{eq:bracexp} was proven in \cite{EFP} for the case of a pre-Lie algebra, see also \cite{DSV} and \cite{bandiera}.
\end{rem}
We now discuss how the post-Lie Magnus expansion relates to the \emph{Hausdorff} groups of the Lie algebras $\overline\g$ and $\g$. Recall that the Hausdorff group of a complete Lie algebra $(\g,[-,-])$ is the group with product defined by the Baker-Campbell-Hausdorff series  $\operatorname{BCH}(x,y)=\log(e^xe^y)$
whose the first terms read as
\[
\operatorname{BCH}(x,y)=x+y+\frac{1}{2}[x,y]+\frac{1}{12}([x,[x,y]]+[y,[y,x]])+\cdots
\]
To achieve our goal, let $\sharp:\g\otimes\g\rightarrow\g$ be defined by
\begin{equation}
x\sharp y=\log_{\cdot}\big(\exp_{\cdot}(x)\ast\exp_{\cdot}(y)\big),\,\forall x,y\in\g. \label{eq:compprod}
\end{equation}
This operation is called \emph{composition product} in \cite{FloyMunth} to which we refer for more informations about its relevance in the theory of geometric numerical integration. 
\begin{lem}[\cite{FloyMunth}, Proposition 2.5, p.12]\label{lem:compro}
	For all $x,y\in\g$, one has 
\begin{equation}
x\sharp y=\operatorname{BCH}_{[-,-]}\big(x,\exp_{\cdot}(x)\triangleright y\big). \label{eq:compbch1}
\end{equation}
\end{lem}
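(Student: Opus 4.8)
The plan is to unwind the definitions of the composition product $\sharp$ and of the Grossmann--Larson product $\ast$, and then to reduce everything to the classical Baker--Campbell--Hausdorff identity inside $\hat{\mathcal U}_{\cdot}(\g)$. First I would note that, $x$ being primitive, $\exp_{\cdot}(x)$ is a group-like element of $\hat{\mathcal U}_{\cdot}(\g)$, hence also of $\hat{\mathcal U}_{\ast}(\g)$ since these two complete Hopf algebras carry the same coalgebra structure. Consequently $\Delta\exp_{\cdot}(x)=\exp_{\cdot}(x)\ot\exp_{\cdot}(x)$, and the definition \eqref{eq:ast prod} of $\ast$ immediately gives
\[
\exp_{\cdot}(x)\ast\exp_{\cdot}(y)=\exp_{\cdot}(x)\cdot\big(\exp_{\cdot}(x)\triangleright\exp_{\cdot}(y)\big).
\]

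Next I would show that $\exp_{\cdot}(x)\triangleright\exp_{\cdot}(y)=\exp_{\cdot}\big(\exp_{\cdot}(x)\triangleright y\big)$. Writing $g=\exp_{\cdot}(x)$, the point is that the map $\psi:=g\triangleright(-)$ on $\hat{\mathcal U}_{\cdot}(\g)$ --- where $\triangleright$ is the extension to the completion of the $D$--product coming from the $D$--bialgebra structure of Theorem \ref{thm:tbrD} --- is a continuous bialgebra endomorphism: $\psi$ is multiplicative and unital by axiom \ref{D bial item4} together with $\epsilon(g)=1$, and it is a coalgebra morphism by axiom \ref{D bial item3}, both because $g$ is group-like. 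A bialgebra endomorphism of a complete cocommutative connected Hopf algebra preserves primitive elements and commutes with $\exp_{\cdot}$, so $\psi(y)=g\triangleright y\in\g$ and $\psi(\exp_{\cdot}(y))=\exp_{\cdot}(\psi(y))$, which is the claimed identity.

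Finally, combining the two displays yields $\exp_{\cdot}(x)\ast\exp_{\cdot}(y)=\exp_{\cdot}(x)\cdot\exp_{\cdot}\big(\exp_{\cdot}(x)\triangleright y\big)$; applying $\log_{\cdot}$ and invoking the classical Baker--Campbell--Hausdorff identity $\log_{\cdot}\big(\exp_{\cdot}(a)\cdot\exp_{\cdot}(b)\big)=\operatorname{BCH}_{[-,-]}(a,b)$ in $\hat{\mathcal U}_{\cdot}(\g)$ (for the Lie bracket $[-,-]$ of $\g$, which is the commutator there) gives $x\sharp y=\operatorname{BCH}_{[-,-]}\big(x,\exp_{\cdot}(x)\triangleright y\big)$, as desired. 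I expect the main, and really the only, subtlety to be the middle step: carefully justifying that $g\triangleright(-)$ is well defined on the completion and is a bialgebra endomorphism there, so that it intertwines the exponential maps; everything else is a formal manipulation of group-like elements together with the classical BCH series.
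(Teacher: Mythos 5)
Your proposal is correct and follows essentially the same route as the paper: use the group-likeness of $\exp_{\cdot}(x)$ to write $\exp_{\cdot}(x)\ast\exp_{\cdot}(y)=\exp_{\cdot}(x)\cdot\big(\exp_{\cdot}(x)\triangleright\exp_{\cdot}(y)\big)$, show via \ref{D bial item4} that $\exp_{\cdot}(x)\triangleright(-)$ is multiplicative so that $\exp_{\cdot}(x)\triangleright\exp_{\cdot}(y)=\exp_{\cdot}\big(\exp_{\cdot}(x)\triangleright y\big)$, and conclude by the classical BCH identity. The only cosmetic difference is that you phrase the middle step as ``$\exp_{\cdot}(x)\triangleright(-)$ is a (continuous) bialgebra endomorphism, hence commutes with $\exp_{\cdot}$,'' whereas the paper does the equivalent explicit induction $\exp_{\cdot}(x)\triangleright y^{n}=\big(\exp_{\cdot}(x)\triangleright y\big)^{n}$.
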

\begin{proof}
For completeness we recall here below the proof of this result. Since $\exp_{\cdot}(x)$ is a group-like element, \eqref{eq:ast prod} implies that 
\[
\exp_{\cdot}(x)\ast\exp_{\cdot}(y)=\exp_{\cdot}(x)\cdot(\exp_{\cdot}(x)\triangleright\exp_{\cdot}(y)).
\]
A simple induction on the length of the monomials, together with \ref{D bial item4}, gives 
\[
\exp_{\cdot}(x)\triangleright \underbrace{y\cdots y}_{n-\text{times}}=\underbrace{\exp_{\cdot}(x)\triangleright y\cdots\exp_{\cdot}(x)\triangleright y}_{n-\text{times}},
\]
which implies
\[
\exp_{\cdot}(x)\ast\exp_{\cdot}(y)=\exp_{\cdot}(x)\exp_{\cdot}(\exp_{\cdot}(x)\triangleright
y)=\exp_{\cdot}(\operatorname{BCH}(x,\exp_{\cdot}(x)\triangleright y)),
\]
proving the statement.
\end{proof}
Going back to the Formula \eqref{eq:compprod} one can compute:
\begin{eqnarray*}
x\sharp y&=&\log_{\cdot}\big(\exp_{\cdot}(x)\ast\exp_{\cdot}(y)\big)\\
&=&\log_{\cdot}\big(\exp_{\ast}(\chi(x))\ast\exp_{\ast}(\chi(y))\big)\\
&=&\log_{\cdot}\big(\phi\big(\exp_{\odot}(x)\big)\ast\phi\big(\exp_{\odot}(y)\big)\big)\\
&=&\log_{\cdot}\big(\phi\big(\exp_{\odot}(x)\odot\exp_{\odot}(y)\big)\big)\\
&=&\log_{\cdot}\big(\phi\big(\exp_{\odot}\big(\operatorname{BCH}_{[[-,-]]}(\chi(x)),\chi(y)\big)\big)\big)\\
&=&\log_{\cdot}\big(\exp_{\ast}\big(\operatorname{BCH}_{[[-,-]]}(\chi(x)),\chi(y)\big)\big).\\
\end{eqnarray*}
Therefore, one has 
\[
\exp_{\cdot}(x\sharp y)=\exp_{\ast}\big(\operatorname{BCH}_{[[-,-]]}(\chi(x)),\chi(y)\big), 
\]
which, using the definition of the map $\chi$, becomes
\[
\exp_{\ast}\big(\chi(x\sharp y)\big)=\exp_{\ast}\big(\operatorname{BCH}_{[[-,-]]}(\chi(x)),\chi(y)\big),
\]
or, equivalently,
\[
\chi(x\sharp y)=\operatorname{BCH}_{[[-,-]]}(\chi(x)),\chi(y)\big).
\]
Using \eqref{eq:compbch1} and the identity 
\[
\exp_{\cdot}(x)\triangleright y=\sum_{n\geq 0}\frac{1}{n!}T_n(\underbrace{x,\dots,x}_{n-\text{times}};y),\,\forall x,y\in\g,
\]
one obtains the following. 
\begin{prop}\label{prop:bchprop}
For all $x,y\in\g$, one has 
\begin{equation}
\operatorname{BCH}_{[[-,-]]}(\chi(x),\chi(y))=\chi\big(\operatorname{BCH}_{[-,-]}(x,\operatorname{Exp}(\nabla_{\chi(x)})(y))\big).\label{eq:compbch2}
\end{equation}
\end{prop}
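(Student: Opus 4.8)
The plan is to read the identity off from three facts that are already in place above, so that no genuinely new computation is needed. The three ingredients are: (a) the chain of equalities displayed just before the statement, which shows that the composition product $\sharp$ of \eqref{eq:compprod} satisfies $\chi(x\sharp y)=\operatorname{BCH}_{[[-,-]]}(\chi(x),\chi(y))$; (b) Lemma \ref{lem:compro}, giving $x\sharp y=\operatorname{BCH}_{[-,-]}\big(x,\exp_{\cdot}(x)\triangleright y\big)$; and (c) Proposition \ref{prop:aut} together with the definition \eqref{eq:canonical aut} of $\operatorname{Exp}(\nabla_{\chi(x)})$, which identify $\exp_{\cdot}(x)\triangleright y=\sum_{n\geq 0}\tfrac{1}{n!}T_n(x,\dots,x;y)=\operatorname{Exp}(\nabla_{\chi(x)})(y)$.

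For (a), I would argue as follows. Using the definition of the post-Lie Magnus expansion, rewrite $\exp_{\cdot}(x)=\exp_{\ast}(\chi(x))$ and $\exp_{\cdot}(y)=\exp_{\ast}(\chi(y))$, so that $\exp_{\cdot}(x)\ast\exp_{\cdot}(y)=\exp_{\ast}(\chi(x))\ast\exp_{\ast}(\chi(y))$. Since $\phi$ is an isomorphism of bialgebras from $(\hat{\mathcal U}_{\odot}(\overline{\g}),\odot)$ onto $(\hat{\mathcal U}_{\ast}(\g),\ast)$ by Theorem \ref{thm:klm}, and sends the group-like $\exp_{\odot}(z)$ to $\exp_{\ast}(z)$, this equals $\phi\big(\exp_{\odot}(\chi(x))\odot\exp_{\odot}(\chi(y))\big)=\phi\big(\exp_{\odot}(\operatorname{BCH}_{[[-,-]]}(\chi(x),\chi(y)))\big)=\exp_{\ast}\big(\operatorname{BCH}_{[[-,-]]}(\chi(x),\chi(y))\big)$, where the middle step is just the definition of the BCH series inside $\hat{\mathcal U}_{\odot}(\overline{\g})$ (whose Lie bracket is $[[-,-]]$). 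Taking $\log_{\cdot}$ and using that $\exp_{\cdot}$ and $\exp_{\ast}$ are bijections onto the group-like elements, one gets $\exp_{\ast}(\chi(x\sharp y))=\exp_{\ast}(\operatorname{BCH}_{[[-,-]]}(\chi(x),\chi(y)))$, which is (a).

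Then, substituting (c) into (b) gives $x\sharp y=\operatorname{BCH}_{[-,-]}\big(x,\operatorname{Exp}(\nabla_{\chi(x)})(y)\big)$; applying $\chi$ to both sides and comparing with (a) yields \eqref{eq:compbch2}. The only point demanding care — and the real ``obstacle'', such as it is — is the bookkeeping of which completed enveloping algebra ($\hat{\mathcal U}_{\cdot}(\g)$, $\hat{\mathcal U}_{\ast}(\g)$ or $\hat{\mathcal U}_{\odot}(\overline{\g})$), and hence which coproduct and which parametrization of the group-like elements, each exponential lives in; once that dictionary is pinned down, the proposition is a purely formal consequence of Theorem \ref{thm:klm}, Lemma \ref{lem:compro} and Proposition \ref{prop:aut}.
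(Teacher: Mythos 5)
Your proposal is correct and follows essentially the same route as the paper: the displayed chain of equalities before the statement (rewriting $\exp_{\cdot}=\exp_{\ast}\circ\chi$, transporting through the bialgebra isomorphism $\phi$ of Theorem \ref{thm:klm} to land on $\operatorname{BCH}_{[[-,-]]}$, and using injectivity of $\exp_{\ast}$ on primitives) gives $\chi(x\sharp y)=\operatorname{BCH}_{[[-,-]]}(\chi(x),\chi(y))$, which is then combined with Lemma \ref{lem:compro} and the identification $\exp_{\cdot}(x)\triangleright y=\operatorname{Exp}(\nabla_{\chi(x)})(y)$ from Proposition \ref{prop:aut}. No gaps; your bookkeeping of which completed enveloping algebra each exponential lives in matches the paper's intent (and in fact cleans up a small typographical slip in the paper's chain, where $\chi$ is omitted inside $\phi(\exp_{\odot}(\cdot))$).
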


If $(\g,\triangleright)$ is a pre-Lie algebra, i.e. a post-Lie algebra such that $[-,-]\equiv 0$, then 
\[
\chi(x)=\log_{\triangleright}(1+x),
\]
see Remark \ref{rem:mag}, i.e. under this assumption the post-Lie Magnus expansion \emph{is} the pre-Lie Magnus expansion, see Corollary 8 pag. 276 in \cite{KI}. 
 From Proposition \ref{prop:bchprop} one gets the following. 
\begin{cor}\label{cor:ag}
If $\g$ is a pre-Lie algebra, then, for all $x,y\in\g$, one has 
\begin{equation}
\exp_{\triangleright}\big(\operatorname{BCH}_{[[-,-]]}(\chi(x),\chi(y))\big)=1+x+\operatorname{Exp}(\nabla_{\chi(x)})(y).\label{eq:compbch3}
\end{equation}
\end{cor}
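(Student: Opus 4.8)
The plan is to obtain Corollary~\ref{cor:ag} as a direct specialization of Proposition~\ref{prop:bchprop}, requiring essentially no new computation. First I would record that in a pre-Lie algebra the Lie bracket $[-,-]$ vanishes, so the Lie algebra $(\g,[-,-])$ is abelian and its Baker--Campbell--Hausdorff series degenerates to addition: $\operatorname{BCH}_{[-,-]}(a,b)=a+b$ for all $a,b\in\g$. Substituting this into the right-hand side of \eqref{eq:compbch2}, Proposition~\ref{prop:bchprop} becomes the identity
\[
\operatorname{BCH}_{[[-,-]]}(\chi(x),\chi(y))=\chi\big(x+\operatorname{Exp}(\nabla_{\chi(x)})(y)\big).
\]

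Next I would apply the left pre-Lie exponential $\exp_{\triangleright}$ to both sides. By Remark~\ref{rem:mag}, when $[-,-]\equiv 0$ the post-Lie Magnus expansion $\chi$ coincides with the pre-Lie Magnus expansion $\Omega$, which is precisely the pre-Lie logarithm, so $\chi(z)=\log_{\triangleright}(1+z)$ for every $z\in\g$; consequently $\exp_{\triangleright}\circ\chi$ is the map $z\mapsto 1+z$. Applying it to the displayed identity with $z=x+\operatorname{Exp}(\nabla_{\chi(x)})(y)$ yields
\[
\exp_{\triangleright}\big(\operatorname{BCH}_{[[-,-]]}(\chi(x),\chi(y))\big)=1+x+\operatorname{Exp}(\nabla_{\chi(x)})(y),
\]
which is exactly \eqref{eq:compbch3}.

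The only point that needs any care -- and the one I would flag as the (minor) main obstacle -- is the bookkeeping around completions: one must check that $\chi(x)$, $\chi(y)$, and $x+\operatorname{Exp}(\nabla_{\chi(x)})(y)$ all lie in the completed pre-Lie algebra on which $\exp_{\triangleright}$ and $\log_{\triangleright}$ are mutually inverse bijections, so that $\exp_{\triangleright}(\log_{\triangleright}(1+w))=1+w$ may legitimately be invoked. Since $\chi$ is defined as a map of the completed objects and $\nabla_{\chi(x)}$ strictly raises filtration degree, every series appearing above converges in the relevant completion and the argument goes through verbatim; beyond citing Proposition~\ref{prop:bchprop} and Remark~\ref{rem:mag} nothing further is required.
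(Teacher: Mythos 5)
Your proposal is correct and follows essentially the same route the paper intends: specialize Proposition \ref{prop:bchprop} using $\operatorname{BCH}_{[-,-]}(a,b)=a+b$ when the bracket vanishes, identify $\chi$ with the pre-Lie logarithm $\log_{\triangleright}(1+\cdot)$ as in Remark \ref{rem:mag}, and apply $\exp_{\triangleright}$ to both sides. Your added remark on completions is a reasonable (and harmless) point of care that the paper leaves implicit.
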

\begin{rem}\label{rem:explic}
The identity \eqref{eq:compbch3} was first proven in \cite{AG}, where it was also observed that the operation $f:\g\otimes\g\rightarrow\g$ defined by 
\begin{equation}
f(x,y)=x+\operatorname{Exp}(\nabla_{\chi(x)})(y),\,\forall x,y\in\g,\label{eq:formalaw}
\end{equation}
turns $\g$ into a group, there termed the group of \emph{formal flows} of $\g$, isomorphic to the Hausdorff group of the Lie algebra underlying the pre-Lie algebra $(\g,\triangleright)$, see also \cite{Manchon, EFP, bandiera} and \cite{DSV}. It is probably worth to stress that in the pre-Lie case the symmetric brace algebra is a crucial ingredient to \emph{integrate} the original pre-Lie algebra, see Formulas \eqref{eq:bracexp} and \eqref{eq:formalaw} while the pre-Lie Magnus expansion is used to define the isomorphism between the group of formal flows and the Baker-Campbell-Hausdorff group of the Lie algebra underlying the original pre-Lie algebra. In analogy with the pre-Lie case, Proposition \ref{prop:bchprop} shows that the post-symmetric braces and the post-Lie Magnus expansion are crucial ingredients to \emph{integrate} a (complete) post-Lie algebra.

It is also worth to remark that Lemma \ref{lem:compro}, together with Definition \ref{de: pl magnus expansion}, tells us that for every (unital and complete) pre-Lie algebra the composition law of the formal flows \eqref{eq:formalaw} is nothing else than the composition product defined in \eqref{eq:compprod}.
\end{rem}

\section*{Acknowledgements}
The second author was supported by PNPD/CAPES-2013 during the first period of this project, and by grant ``\#2018/19603-0, S\~ao Paulo Research Foundation (FAPESP)''  during the second period. 
The third author thanks the Instituto de Ci\^encias Matem\'aticas e de Computa\c c\~ao, where the first part of this project was developed during her doctorate.  

The authors wish to express their gratitude to Kurusch Ebrahimi-Fard and Dominique Manchon for useful comments about the content of the paper.

\bibliographystyle{abbrv}

\end{document}